\newtheorem{thm}{Theorem}[section]
\newtheorem{lem}[thm]{Lemma}
\newtheorem{proposition}[thm]{Proposition}
\newtheorem{nota}[thm]{Notation}
\theoremstyle{definition}
\newtheorem{rem}[thm]{Remark}
\newtheorem{rems}[thm]{Remarks}
\newcommand{\R}{{\mathbb{R}}}
\newcommand{\E}{{\mathbb{E}}}
\newcommand{\Z}{{\mathbb{Z}}}
\newcommand{\N}{{\mathbb{N}}}
\newcommand{\T}{{\mathbb{T}}}
\newcommand{\PP}{{\mathbb{P}}}
\newcommand{\Cal}{\mathcal}
\newcommand{\cal}{\mathcal}
\def \mod {{\rm \ mod \,} }
\def \for {{\rm\ for \ }}
\def \and {{\rm\ and \ }}
\def\e{{\rm e}}
 \def\k{{\underline k}}
 \def\el{{\underline \ell}}
\def\t{{\underline t}} 
\def\0{{\underline 0}} 
 \def\p {{\underline p}}
 \def\r{{\underline r}}
\def\stm0{{\setminus \{\0\}}}
\def\eop{\qed}
\def\Proof {\vskip -3mm {{\it Proof}. }}
\def\proof {\vskip -3mm {{\it Proof}. }}
\def \Log {{\rm Log}}
\def \Card {{\rm Card}}
\begin{document}

\date{\today}
\parskip=2mm
\baselineskip 15pt
\parindent=0mm

\title[Empirical process sampled along a stationary process] {Empirical process \\ sampled along a stationary process}


\author{Guy Cohen and Jean-Pierre Conze}
\address{Guy Cohen, \hfill \break School of Electrical Engineering, \hfill \break Ben-Gurion University, Israel} \email{guycohen@bgu.ac.il}
\address{Jean-Pierre Conze, \hfill \break IRMAR, CNRS UMR 6625, \hfill \break University of Rennes, Campus de Beaulieu, 35042 Rennes Cedex, France} \email{conze@univ-rennes1.fr}
\subjclass[2010]{Primary: 60F05, 28D05, 22D40, 60G50; Secondary: 47B15, 37A25, 37A30} \keywords{Empirical process, sampling along a stationary process, local times,
Glivenko-Cantelli theorem, functional central limit theorem, random walks}

\maketitle

\begin{abstract}  
Let $(X_{\el})_{\el \in \Z^d}$ be a real random field (r.f.) indexed by $\Z^d$ with common probability distribution function $F$. 
Let $(z_k)_{k=0}^\infty$ be a sequence in $\Z^d$. The empirical process obtained by sampling the random field along $(z_k)$ is
$\sum_{k=0}^{n-1} [{\bf 1}_{X_{z_k} \leq s}- F(s)]$. 

We give conditions on $(z_k)$ implying the Glivenko-Cantelli theorem for the empirical process sampled along $(z_k)$
in different cases (independent, associated or weakly correlated random variables).  We consider also the functional central limit theorem when the $X_\el$'s are i.i.d.

These conditions are examined when $(z_k)$ is provided by an auxiliary stationary process in the framework of ``random ergodic theorems''.
\end{abstract}

\tableofcontents

\section*{\bf Introduction}

For a sequence $(X_k)$ of real i.i.d. random variables with common probability distribution function $F$, 
the empirical process is defined by $\sum_{k=0}^{n-1} \, [{\bf 1}_{X_k \leq s}- F(s)]$. Recall two classical results.
\hfill \break
(A) the Glivenko-Cantelli theorem: 
\hfill \break
{\it a.s. the sequence of empirical distribution functions $F_n(s) := \frac1n \sum_{k=0}^{n-1} {\bf 1}_{X_k \leq s}$ 
converges uniformly to $F$, i.e. $\sup_s |F_n(s) - F(s)| \to 0$; }
\hfill \break
(B) a functional central limit theorem (FCLT): if the r.v.s $X_k$ have a common distribution $F$ over $[0, 1]$, then
\hfill \break
{\it the process ${1 \over \sqrt n} \sum_{k=0}^{n-1} \, [{\bf 1}_{X_k \leq s}- F(s)]$ converges weakly to a Brownian bridge in the space of cadlag functions on $[0, 1]$.}

In this paper we study the extension of these results when the process is sampled along a subsequence, analogously to what is done for limit theorems in random scenery. 

In the sequel, for $d \geq1$, $(X_{\el})_{\el \in \Z^d}$ will be a real random field (r.f.) indexed by $\Z^d$  
defined on a probability space $(\Omega, \mathcal F,\PP)$ with common probability distribution function $F$. 
The expectation on $(\Omega, \PP)$ is denoted by $\E$.
We consider in particular the case of a r.f. of i.i.d. r.v.'s or of stationary associated r.v.'s.

Let $(z_k)_{k=0}^\infty$ be a sequence in $\Z^d$. The process obtained by sampling the random field along $(z_k)$ is
$W_n(s) := \sum_{k=0}^{n-1} [{\bf 1}_{X_{z_k} \leq s}- F(s)]$.  

We will call $W_n(s)$ ``empirical process sampled along $(z_k)$'', or simply ``sampled empirical process''.
A general question is whether the above results (A), (B) extend to the sampled empirical process $W_n(s)$, 
in particular when $(z_k)$ is given by another stationary process with values in $\Z^d$. 

In Section \ref{sectGen}, we give conditions on $(z_k)$ implying that (A) and (B) are still valid for an empirical process sampled along $(z_k)$
in different cases: independent, associated or weakly correlated random variables.
The conditions are expressed in terms of the following quantities associated to the sequence $(z_k)$ in $\Z^d$:
local time, maximal local time and number of self-intersections (up to time $n$) defined, for $n \geq 1$, by
\begin{eqnarray}
&&N_n(\el):=\#\{0\le k\le n-1:\ z_k=\el\}, \nonumber \\ 
&&\ M_n:= \max_{\el}N_n(\el), \ V_n:=\#\{0\le j,k \le n-1 :\, z_j=z_k\}. \label{notaVn}
\end{eqnarray}
They satisfy $\sum_\el  N_n(\el) = n$ and $n \leq V_n=\sum_\el N_n^2(\el) \leq n M_n \leq n^2$.

In the other sections, $(z_k)$ is given by a stationary process (or equivalently by the sequence $(S_k f(x))_{k \geq 1}$ of ergodic sums of a function $f$ over a dynamical system).

The conditions found in Section \ref{sectGen} lead to study the local times, maximum number of visits, number of self-intersections for the sequence $(S_k f(x))$. 
General remarks are presented in Section \ref{genCocy}. Then in Section \ref{exSect}, we consider two families of examples: random walks and some ergodic sums over a rotation.

The Glivenko-Cantelli theorem along ergodic sums (extension of (A)) is strongly related to random ergodic theorems, 
in particular to results in \cite{LPWR94} and \cite{LLPVW02}. This is discussed in the last Section \ref{sectGC0}.

Finally let us mention the quenched FCLT for the 2-parameters process 
$$W_n(s, t) := \sum_{k=0}^{{[n t]}-1} \, [{\bf 1}_{X_{Z_k(x)}  \leq s}- F(s)], \, (s, t) \in [0,1]^2.$$
When $(X_\el)$ is a r.f. of i.i.d. r.v.'s indexed by $\Z^2$ and when the sampling is provided by a 2-dimension centered random walk $(Z_k)$ with a moment of order 2, 
the weak convergence for a.e. $x$ toward a Kiefer-M\"uller process can be shown. This will be the content of a forthcoming paper.

\vskip 3mm 
{\bf Acknowledgements.} Part of this research was done during visits of the first author to the IRMAR at the University of Rennes 1 
and of the second author to the Center for Advanced Studies in Mathematics at Ben Gurion University. 
The authors are grateful to their hosts for their support.

\vskip 3mm
\goodbreak
\section{\bf General results on the empirical process along a sub-sequence} \label{sectGen}

\subsection{Preliminaries}

In this subsection, results on the empirical process along a sub-sequence are shown for independent variables, 
as well for some of them for wider classes (associated, PDQ and weakly correlated random variables).
We start by recalling some notions and auxiliary results.

\vskip 3mm
{\it 1) Associated variables}

{\bf Definition } (cf. \cite{EPW}): A finite set of real random variables ${\bf T}=(T_1, T_2,\ldots, T_n)$ is said to be {\it associated} if ${\rm Cov} [f({\bf T}), g({\bf T})]\ge0$,
for every coordinate-wise non-decreasing functions $f = f(x_1, ..., x_n)$ and $g = g(x_1, ..., x_n)$ for which $\E[f({\bf T} )]$, $\E[g({\bf T})]$,
$\E[ f( {\bf T}) \, g( {\bf T})]$ exist. An infinite set of random variables is associated if any finite subset of it is associated. 

Association of random variables is preserved under taking subsets and forming unions of independent sets (of associated random variables). 
In particular a family of independent variables is associated. 

Clearly, orthogonal associated random variables are independent. 
Examples of (non independent) stationary associated processes with absolutely summable series of correlations are provided by some Ising models. 
References to such examples of stationary $\Z^d$ random fields which satisfies the FKG inequalities and with absolutely summable correlations 
can be found in Newman's paper \cite{N80}. Notice that the FKG inequalities expresses the association property of the r.v.'s. 

2) {\it PQD variables}

Two r.v.'s $X, Y$ are called (cf. \cite{Leh66}) {\it positively quadrant dependent (PQD)} if, 
$$\PP(X>x, Y>y)\ge \PP(X>x) \, \PP(Y>y), \forall x,y\in \R.$$
The property is preserved by centering.  Any pairwise associated r.v.'s are pairwise PQD.
Pairwise independent random variables are pairwise PQD associated.

Two random variables $X$ and $Y$ are PQD if and only if for every non-decreasing functions $f$ and $g$, $\text{Cov}(f(X), g(Y) )\ge 0$ 
(whenever the covariance exists) (\cite[Theorem 4.4]{EPW}). 

3) We will use the following results:

{\it a) Maximal inequality of Newman and Wright} \cite[Inequality (12)]{NW}:

{\it If $(W_i)$ is a sequence of centered associated, square integrable random variables, it holds:}
\begin{eqnarray}
&&\PP(\max_{1\le j \le n} |\sum_{i=1}^j W_i|\geq \lambda \, \| \sum_{i=1}^n W_i \|_2)
\leq 2 \PP(|\sum_{i=1}^n W_i|\ge (\lambda-\sqrt{2}) \, \| \sum_{i=1}^n W_i \|_2), \forall \lambda \geq 0. \label{NW0}
\end{eqnarray}  
{\it b) Hoeffding's identity} (see \cite[Theorem 3.1]{Am17})

{\it Let $X, Y$ be random variables with finite second moments. For any absolutely continuous functions $f, g$ on $\mathbb R$, 
such that $\E[f^2(X)+g^2(Y)] < \infty$, it holds
$$\text{Cov} (f(X),g(Y))=\int_{-\infty}^\infty \int_{-\infty}^\infty f'(x) g'(y)[\PP(X> x, Y> y) - \PP(X> x)\PP(Y> y ) ] dx dy.$$
In particular, if $X, Y$ are PDQ random variables, if $|f'|,|g'|\leq M$ a.e., we have 
$$|\text{Cov} (f(X), g(Y))|\le M^2\text{Cov}(X,Y).$$}
4) Uniformity in the analogues of Glivenko-Cantelli theorem will follow from the lemma:
\begin{lem} \label{ChungLem} \cite[Lemma, p 140]{Chung} Let $F_n$, $F$ be a family of right continuous distributions on $\mathbb R$.  
Assume that, for each point $x$ in a dense countable set $Q\subset \mathbb R$, we have $F_n(x)\to F(x)$.
Let $J$ be the set of jumps of $F$ and assume that $F_n(x)-F_n(x^-)\to F(x)-F(x^-)$ for every $x\in J$. Then $F_n(x)\to F(x)$ uniformly in $\mathbb R$.
\end{lem}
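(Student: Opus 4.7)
The plan is to reduce uniform convergence to a finite-partition sandwich argument, using monotonicity of $F_n$ and $F$ to bound $F_n(x)-F(x)$ between values at nearby partition points, once pointwise convergence is secured at those points.

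First I would strengthen the pointwise convergence: at every $y\in J$ I want to show $F_n(y)\to F(y)$ and $F_n(y^-)\to F(y^-)$. For $q,q'\in Q$ with $q<y<q'$, monotonicity gives the sandwich $F_n(q)\le F_n(y^-)\le F_n(y)\le F_n(q')$; taking $\liminf$/$\limsup$ and letting $q\uparrow y$ and $q'\downarrow y$ along $Q$ (using right-continuity of $F$) yields $\liminf_n F_n(y^-)\ge F(y^-)$ and $\limsup_n F_n(y)\le F(y)$. Combined with the hypothesis $F_n(y)-F_n(y^-)\to F(y)-F(y^-)$, a short subsequence argument forces both one-sided convergences at $y$. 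The same sandwich also yields $F_n(y^-)\to F(y^-)$ at each continuity point $y\in Q$ of $F$.

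Next, given $\epsilon>0$, I build a finite partition $a=y_0<y_1<\cdots<y_N=b$. Only finitely many jumps of $F$ have size $\ge\epsilon$; I take $a,b\in Q$ with $F(a)<\epsilon$ and $1-F(b)<\epsilon$, include every $\epsilon$-jump of $F$ inside $[a,b]$ as a partition point, and refine the remaining intervals by points of $Q$ so that every $y_k\in Q\cup J$ and $F(y_{k+1}^-)-F(y_k)<\epsilon$ for each $k$. For $x\in[y_k,y_{k+1})$, monotonicity gives
\begin{equation*}
F_n(y_k)-F(y_{k+1}^-)\le F_n(x)-F(x)\le F_n(y_{k+1}^-)-F(y_k),
\end{equation*}
so
\begin{equation*}
|F_n(x)-F(x)|\le \max_{0\le k<N}\bigl(|F_n(y_k)-F(y_k)|+|F_n(y_{k+1}^-)-F(y_{k+1}^-)|\bigr)+\epsilon.
\end{equation*}
By the first step each of the finitely many error terms tends to $0$, so the right-hand side is $<2\epsilon$ for $n$ large. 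The tails $x<a$ and $x>b$ are controlled by $F(a),1-F(b)<\epsilon$ together with the pointwise convergence at $a,b$, giving $\sup_x|F_n(x)-F(x)|\to 0$.

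The main obstacle is the first step: the hypothesis does not directly supply convergence of $F_n(y)$ or of $F_n(y^-)$ at jumps of $F$, and teasing these out requires combining the jump-size hypothesis with the monotone sandwich from $Q$. Once the one-sided pointwise convergences at every partition point are in hand, the remaining argument is routine bookkeeping.
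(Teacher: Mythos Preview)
The paper does not give its own proof of this lemma; it is quoted verbatim as \cite[Lemma, p.~140]{Chung} and used as a black box. Your argument is correct and is essentially the standard proof found in Chung: establish one-sided pointwise convergence at all points of $Q\cup J$ via the monotone sandwich and the jump-size hypothesis, then run the finite-partition sandwich to upgrade to uniform convergence. The only point that deserves an extra line of justification is the existence of the partition $y_0<\cdots<y_N$ with $y_k\in Q\cup J$ and $F(y_{k+1}^-)-F(y_k)<\epsilon$; a greedy construction (take $y_{k+1}$ to be the next $\epsilon$-jump if one occurs before $F$ rises by $\epsilon$, otherwise a point of $Q$ just below the level where $F$ has risen by $\epsilon$) terminates in finitely many steps since $F$ gains at least $\epsilon/2$ at each stage.
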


{\bf A strong law of large numbers}

First we state a law of large numbers for bounded r.v.'s valid under weak hypotheses.

Let $(U_\el)_{\el\in\mathbb Z^d}$ be a r.f. indexed by $\Z^d$ of square integrable r.v's on a probability space $(\Omega, \mathcal F, \PP)$.
Let $(z_k)_{k \geq 0}$ be a sequence in $\Z^d$, $d \geq 1$, with numbers of self-intersections $V_n, n \geq 1$. 
The partial sums along $(z_k)$ are denoted by $\displaystyle S_n :=\sum_{k=0}^{n-1} U_{z_k}$.

By the Cauchy-Schwarz inequality, if $\sum_\el \sup_\r |\langle U_{\r+\el} , U_{\r}\rangle| < +\infty$, if holds for a finite constant $C_0$:
\begin{eqnarray}
&&\|\sum_{i=0}^{n-1} U_{z_i}\|_2^2 
= \sum_\el \sum_\r N_n(\r+\el) N_n(\r) \langle U_{\r+\el} , U_{\r}\rangle \leq  V_n \sum_\el \sup_\r |\langle U_{\r+\el} , U_{\r}\rangle| = C_0 V_n. \label{norm2p}
\end{eqnarray}
In particular if the r.f. is stationary and the series of correlations is absolutely summable (i.e., $\sum_{\el \in \Z^d} |\langle X_\0, X_\el \rangle| < +\infty$),
then the spectral density of the r.f. exists and is the continuous non-negative function $\rho$ on $\T^d$ with Fourier coefficients
$\displaystyle \int_{\T^d} e^{2 \pi i \langle \el, \t \rangle} \, \rho(\t) \, d\t = \langle X_\0, X_\el \rangle$ and it holds:
\begin{eqnarray}
&&\|S_n\|_2^2 = \|\sum_{i=0}^{n-1} U_{z_i}\|_2^2 \leq  V_n \sum_\el |\langle U_\el , U_\0\rangle |. \label{norm2}
\end{eqnarray}
\begin{proposition} \label{LLN1} Suppose the r.v.'s $U_\el$ on $(\Omega, \PP)$ centered and uniformly bounded by the same constant $K$, $\|U_\el\|_\infty \leq K, \forall \el$. 
Assume that $(z_k)$ is such that
\begin{eqnarray}
V_n \leq C_1 {n^2 \over (\log n)^\beta}, \text{ for constants } C_1, \beta. \label{condibeta}
\end{eqnarray}
1) Then, if $\beta > 1$ and $\displaystyle \sum_{\el \in \Z^d} \sup_{r \in \Z^d} |\langle U_{\r+\el} , U_{\r}\rangle| < + \infty$,
\hfill \break 2) or if $\beta > \zeta$ for some $\zeta \in [1, 2]$ and the r.f. $(U_\el)$ is stationary with 
$\displaystyle \sum_{\el \in \Z^d} |\langle U_\el , U_{\0}\rangle|^\zeta < \infty$, 
\hfill \break the (strong) LLN holds: $\displaystyle {S_n(\omega) \over n} \to 0$, for $\PP$-a.e $\omega$.
\end{proposition}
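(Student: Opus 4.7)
The plan is to combine a second-moment bound on $S_n$ with a Borel--Cantelli argument along a suitably sparse subsequence, and then interpolate to all $n$ using the uniform bound $\|U_\el\|_\infty \leq K$. In case 1), inequality (\ref{norm2p}) gives directly $\|S_n\|_2^2 \leq C_0 V_n$, hence by hypothesis (\ref{condibeta})
\begin{equation*}
\Bigl\|\frac{S_n}{n}\Bigr\|_2^2 \leq \frac{C_0 C_1}{(\log n)^\beta}.
\end{equation*}

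For case 2), I would exploit stationarity: writing $c(\el) := \langle U_\el, U_\0\rangle$, the expansion of $\|S_n\|_2^2$ appearing in (\ref{norm2}) takes the Parseval form
\begin{equation*}
\|S_n\|_2^2 = \int_{\T^d} \rho(\t)\, |\hat N_n(\t)|^2 \, d\t, \qquad \hat N_n(\t) := \sum_{\el \in \Z^d} N_n(\el)\, e^{-2\pi i \langle \el,\t\rangle}.
\end{equation*}
By the Hausdorff--Young inequality, $c \in \ell^\zeta(\Z^d)$ with $\zeta \in [1,2]$ implies $\rho \in L^{\zeta'}(\T^d)$, where $\zeta'$ is the conjugate exponent. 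Hölder gives $\|S_n\|_2^2 \leq \|\rho\|_{\zeta'}\, \|\, |\hat N_n|^2\,\|_\zeta$. Since $|\hat N_n(\t)| \leq \sum_\el N_n(\el) = n$ while $\int_{\T^d} |\hat N_n|^2 = V_n$, interpolation between $L^\infty$ and $L^1$ yields $\|\, |\hat N_n|^2\,\|_\zeta \leq n^{2(\zeta-1)/\zeta}\, V_n^{1/\zeta}$, and (\ref{condibeta}) again produces $\|S_n/n\|_2^2 \leq C\, (\log n)^{-\beta/\zeta}$. In both cases the bound is of the form $\|S_n/n\|_2^2 \leq C\, (\log n)^{-\gamma}$ with $\gamma > 1$ (namely $\gamma = \beta$ in case 1) and $\gamma = \beta/\zeta$ in case 2)).

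With this in hand, I would pick $c \in (1/\gamma, 1)$ and define $n_k := \lfloor \exp(k^c) \rfloor$, so that $\sum_k (\log n_k)^{-\gamma} \asymp \sum_k k^{-c\gamma} < \infty$ while $n_{k+1}/n_k \to 1$. Chebyshev combined with Borel--Cantelli then gives $S_{n_k}/n_k \to 0$ almost surely. To pass from the subsequence to the full sequence, for $n_k \leq n < n_{k+1}$ I would use the uniform bound to write
\begin{equation*}
\Bigl|\frac{S_n}{n}\Bigr| \leq \frac{|S_{n_k}|}{n_k} + \frac{K(n_{k+1} - n_k)}{n_k},
\end{equation*}
and invoke $(n_{k+1}-n_k)/n_k \to 0$ (consequence of $c < 1$) to conclude $S_n/n \to 0$ almost surely.

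The main obstacle is the spectral estimate in case 2): one has to combine Hausdorff--Young to place $\rho$ in $L^{\zeta'}$ with an interpolation of $|\hat N_n|^2$ between $\|\cdot\|_\infty \leq n^2$ and $\|\cdot\|_1 = V_n$ so that the resulting exponent of $\log n$ is exactly $\beta/\zeta$, matching the threshold $\beta > \zeta$ in the statement. The remaining pieces---the Parseval identification of $\|S_n\|_2^2$, the Borel--Cantelli step on $\{n_k\}$, and the filling of gaps via the uniform bound---are routine.
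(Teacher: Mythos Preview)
Your argument is correct and follows the same overall architecture as the paper's proof: bound $\|S_n/n\|_2^2$ by $C(\log n)^{-\gamma}$ with $\gamma>1$, pass to a subsequence of the form $\exp(k^c)$ (the paper uses $2^{k^\alpha}$, which is the same thing) to get a.s.\ convergence via Borel--Cantelli, and then fill the gaps using the uniform bound $\|U_\el\|_\infty\le K$ together with $n_{k+1}/n_k\to 1$.

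The genuine difference lies in case 2). The paper obtains the bound $\|S_n/n\|_2^2\le C(\log n)^{-\beta/\zeta}$ by a level-set truncation of the spectral density: it splits $\int \rho\,|\hat N_n|^2$ at $\{\rho\le M_n\}$ and $\{\rho>M_n\}$, controls the first piece by $M_n V_n/n^2$ and the second by $M_n^{-\zeta'/\zeta}\|\rho\|_{\zeta'}^{1+\zeta'/\zeta}$ via Markov and H\"older, and optimises over $M_n$. Your route is more direct: H\"older against $\rho\in L^{\zeta'}$ (Hausdorff--Young) together with the elementary interpolation $\|\,|\hat N_n|^2\,\|_\zeta\le \|\,|\hat N_n|^2\,\|_\infty^{(\zeta-1)/\zeta}\,\|\,|\hat N_n|^2\,\|_1^{1/\zeta}\le n^{2(\zeta-1)/\zeta}V_n^{1/\zeta}$ lands immediately on $(V_n/n^2)^{1/\zeta}$ and hence on the same exponent $\beta/\zeta$. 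Your argument is shorter and handles all $\zeta\in[1,2]$ uniformly (the endpoint $\zeta=1$ simply recovers \eqref{norm2}); the paper's truncation is a bit more flexible in that it would adapt to other integrability or tail assumptions on $\rho$ not expressed as $\rho\in L^{\zeta'}$.
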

\proof 1)  For convenience, if $t$ is in $\R^+$, we define $S_t$ as $S_{[t]}$. From (\ref{norm2p}) it follows
$$\int ({|S_n| \over n})^2 \, d\PP \leq C_0 {V_n \over n^2} \leq C_0 C_1 {1 \over (\log n)^\beta}.$$
Therefore, putting $\beta= 1+ \eta$ and $\alpha = 1 - \eta/2$ (which implies $\alpha \beta > 1$) we have
\begin{eqnarray*}
\sum_k \int ({|S_{2^{k^\alpha}}| 
\over 2^{{k^\alpha}}})^2 \, d\PP \leq C_0 C_1 \sum_k {1 \over  (\log {2^{k^\alpha}})^\beta} = C' \sum_k {1 \over k^{\alpha \beta}} < +\infty;
\end{eqnarray*}
hence: $\displaystyle \lim_{k \to +\infty}{S_{2^{k^\alpha}} \over 2^{{k^\alpha}}} = 0$, a.e.

For $n \geq 1$, let $k_n$ be such that $2^{{(k_n)^\alpha}} \leq n < 2^{{(k_n+1)^\alpha}}$
(that is: $k_n = [(\log_2 n)^{1/\alpha}]$).

We put $q_n:=2^{{(k_n+1)^\alpha}} - 2^{{(k_n)^\alpha}}$ 
and  $p_n = n - 2^{{(k_n)^\alpha}} \leq q_n$. 

For $q_n$, the following estimate holds: 
$\displaystyle q_n=2^{{(k_n)^\alpha}} (2^{{(k_n+1)^\alpha} - {(k_n)^\alpha}} - 1) \sim C'' {2^{{(k_n)^\alpha}} \over {(k_n)^{1 -\alpha}}}$.

Using the uniform boundedness of the r.v.'s, we can write:
\begin{eqnarray*}
&&|{S_n \over n} - {S_{2^{{(k_n)^\alpha}}} \over 2^{{(k_n)^\alpha}}}| 
= |{S_{2^{{(k_n)^\alpha}}} + \sum_{i= 2^{{(k_n)^\alpha}}}^{2^{{(k_n)^\alpha}}+p_n} U_{z_i} \over 2^{{(k_n)^\alpha}}+p_n} - {S_{2^{{(k_n)^\alpha}}} \over 2^{{(k_n)^\alpha}}}| 
= |{2^{{(k_n)^\alpha}} \, \sum_{i= 2^{{(k_n)^\alpha}}}^{2^{{(k_n)^\alpha}}+p_n} U_{z_i} - p_n S_{2^{{(k_n)^\alpha}}} \over 2^{{(k_n)^\alpha}} (2^{{(k_n)^\alpha}}+p_n)}| \\
&&\leq {2^{{(k_n)^\alpha}} \, \sum_{i= 2^{{(k_n)^\alpha}}}^{2^{{(k_n)^\alpha}}+p_n} |U_{z_i}| + p_n |S_{2^{{(k_n)^\alpha}}}| \over 2^{{(k_n)^\alpha}} (2^{{(k_n)^\alpha}} +p_n)}
\leq {2^{{(k_n)^\alpha}} \, \sum_{i= 2^{{(k_n)^\alpha}}}^{2^{{(k_n)^\alpha}}+q_n} |U_{z_i}| + q_n |S_{2^{{(k_n)^\alpha}}}| \over 2^{{(k_n)^\alpha}} (2^{{(k_n)^\alpha}})} \\
&&\leq {q_n K 2^{{(k_n)^\alpha}}  + q_n |S_{2^{{(k_n)^\alpha}}}| \over 2^{{(k_n)^\alpha}} (2^{{(k_n)^\alpha}})}
={q_n \over 2^{{(k_n)^\alpha}}} (K + \, {|S_{2^{{(k_n)^\alpha}}}| \over 2^{{(k_n)^\alpha}}}).
\leq {C \over 2 {(k_n)^{1 - \alpha}}} (K + \, {|S_{2^{{(k_n)^\alpha}}}| \over 2^{{(k_n)^\alpha}}}) \to 0. 
\end{eqnarray*}

2) We consider now the stationary case. Since $\zeta =1$ is special case of 1), we assume $\zeta \in ]1, 2]$. 
We put $\beta = \zeta + \eta$, where $\eta$ is $> 0$ in view of the hypothesis.

First, suppose that $\zeta =2$. Then under the hypothesis, the r.f. has a spectral measure $\nu_\varphi$ absolutely continuous 
with respect to the Lebesgue measure $\lambda$ on the torus with a density $\rho \in L^2(d\t)$ 
given by the Fourier series $\rho(\t) = \sum_{\el \in \Z^d} \langle U_\el , U_{\0} \rangle \, \e^{2 i \pi \langle \el, \t \rangle}$.

Using the inequality $\lambda \{\rho > M_n\} \leq M_n^{-2} \, \|\rho\|_2^2$, we can write:
\begin{eqnarray*}
&&{\|S_n\|_2^2 \over n^2} =\frac1{n^2} \, \int_{\T^d} |\sum_{j=0}^{n-1} e^{2 \pi i \langle z_j, \t \rangle}|^2 \, d \nu_\varphi(\t) 
\leq \frac{M_n}{n^2} \, \int_{\T^d} |\sum_{j=0}^{n-1} e^{2 \pi i \langle z_j, \t \rangle}|^2 \, d\t+ \int_{\rho > M_n} \, \rho \, d\t \\
&\leq& M_n {V_n \over n^2}+(\lambda \{\rho > M_n\})^\frac12 \|\rho\|_2 \leq M_n {V_n \over n^2} + M_n^{-1} \, \|\rho\|_2^2.
\end{eqnarray*}
Taking $M_n = (\log n)^{1 +  \frac12 \eta}$, we obtain the bound 
$$\frac1{n^2} \, \|\sum_{j=0}^{n-1} U_{z_j}\|_2^2 \, \leq {C \over (\log n)^{1 + \frac12 \eta}}$$ 
and then we finish the proof as in 1). 

Now, suppose that $\displaystyle \sum_{\el \in \Z^d} |\langle U_\el , U_{\0}\rangle|^\zeta < \infty$ with $1 < \zeta < 2$.
The spectral density $\rho$ exists and is in $L^2(\lambda)$, since $\displaystyle \sum_{\el \in \Z^d} |\langle U_\el , U_{\0}\rangle|^2 < \infty$.
Moreover it belongs to $L^{\zeta'}(\lambda)$ where $\zeta, \zeta'$ are conjugate exponents (see: \cite{Z68}, p. 102, or \cite{HeRo} Th. 31.22),
and it satisfies:
$$\|\rho\|_{\zeta'}\leq (\sum_{\el \in \Z^d} |\langle U_\el, U_0 \rangle|^\zeta)^{1/\zeta}.$$
Hölder's inequality implies:
$\int_{\rho > M_n} \, \rho \, d\t \leq (\lambda \{\rho > M_n\})^{1/\zeta} \|\rho\|_{\zeta'}$.
As 
$$\lambda \{\rho > M_n\} \leq M_n^{-\zeta'} \int \rho^{\zeta'} \, d\t= M_n^{-\zeta'} \|\rho\|_{\zeta'}^{\zeta'},$$
it follows:
$$\int_{\rho > M_n} \, \rho \, d\t \leq  M_n^{-\zeta'/\zeta} \|\rho\|_{\zeta'}^{1+\zeta'/\zeta}.$$
Therefore, we obtain
\begin{eqnarray*}
&&\frac1{n^2} \, \int_{\T^d} |\sum_{j=0}^{n-1} e^{2 \pi i \langle z_j, \t \rangle}|^2 \, d \nu_\varphi(\t) 
\leq M_n {V_n \over n^2} + \int_{\rho > M_n} \, \rho \, d\t \leq M_n {V_n \over n^2} + M_n^{-\zeta'/\zeta} \|\rho\|_{\zeta'}^{1+\zeta'/\zeta}.
\end{eqnarray*}
Now we take $M_n$ such that : $M_n/ (\log n)^\beta =  M_n^{-\zeta'/\zeta}$, i.e. $M_n = (\log n)^{\beta/\zeta'}$.
We get  
$$\frac1{n^2} \, \|\sum_{j=0}^{n-1} U_{z_j}\|_2^2 \, \leq {C \over (\log n)^{\beta(1 - 1/ \zeta')}} 
= {C \over (\log n)^{\beta/\zeta}} = {C \over (\log n)^{1 + \eta/\zeta}} \text{ with } \eta > 0,$$
and the end of the proof is as above. \eop

\begin{rems} 1) Let us give an example of a non stationary r.f. $(U_\el)$ which satisfies Condition 1) of the previous proposition.

We take $(U_\el = V_\el \, W_\el, \el \in \Z^d)$, where $(V_\el)$ and $(W_\el)$ are two r.f.'s independent from each other, 
with $(V_\el)$ centered stationary and such that $\sum_{\el \in \Z^d} |\langle V_\el , V_{\0}\rangle| < \infty$,
and $(W_\el)$ satisfying $\sup_{\el, \p} |\langle W_{\el+\p}, W_{\el}\rangle| < \infty$.

The r.f. $(W_\el)$ can be viewed as a (multiplicative) noise (which can be non stationary) independent from the r.f. $(U_\el)$. Clearly the condition in 1) is satisfied.

2) For a stationary r.f. $(U_\el)$ with a bounded spectral density (but with a series of correlations which may be not absolutely summable), 
then like in 1) the condition $\beta > 1$ is sufficient for the conclusion of the theorem. 
\end{rems}
Now, we give a pointwise bound for the sampled sums, first for i.i.d. r.v.'s, then for a stationary random field $(U_\el)_{\el\in \Z^d}$ of associated r.v.'s. 
\begin{proposition} \label{LIL1} 
1) Suppose that the r.v.'s $U_\el, \el\in \Z^d$, are i.i.d., centered, uniformly bounded by a constant $K$, $\|U_\0\|_\infty \leq K$, and that $\E|U_0|^2=1$. 
Then it holds
\begin{eqnarray}
\limsup_n \frac{|S_n|}{\sqrt{V_n} \, (2 \log \log n)^\frac12}\le K, \, \PP{\text -a.e.} \label{lil0}
\end{eqnarray}
If $V_n = o(n^2 \, (\log\log n)^{-1})$, then $\displaystyle \lim_n \frac {S_n} n =0$, $\PP$-a.e. 

2) Suppose the random field stationary and the r.v.'s $U_\el$ centered associated. 

a) For all $\varepsilon > 0$, it holds, with $\sigma_n :=\|\sum_{i=0}^{n-1} U_{z_i}\|_2$:
\begin{eqnarray}
\limsup_n \frac{|S_n|}{\sigma_n \, ( \log\sigma_n)^{\frac12+\varepsilon}}\le 1, \, \PP{\text -a.e.} \label{casea}
\end{eqnarray} 

b) If moreover the r.f. has a summable series of correlations, then, for all $\varepsilon > 0$,
\begin{eqnarray}
|S_n| = O(\sqrt{V_n} \, (\log n)^{\frac12 + \varepsilon}), \, \PP \text{-a.e.} \label{lil00}
\end{eqnarray}
If $\displaystyle V_n \leq C n^2 \, (\log n)^{-(1+\eta)}) \text{ for some constants } C, \eta > 0$, then
$\displaystyle \lim_n \frac{S_n}n = 0, \, \PP\text{-a.e.}$ 
\end{proposition}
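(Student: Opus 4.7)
The plan is to combine a tail bound for $|S_n|$ at fixed $n$ with the Newman--Wright maximal inequality (\ref{NW0}) applied along a carefully chosen subsequence $(n_k)$, and then to interpolate on $[n_{k-1},n_k]$ in order to cover every $n$.

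For Part 1, I rewrite $S_n=\sum_\el N_n(\el)\,U_\el$ as a sum of independent centered r.v.s with $|N_n(\el) U_\el|\leq K N_n(\el)$ and total variance $\sum_\el N_n(\el)^2=V_n$. Hoeffding's inequality gives
$$\PP(|S_n|\geq t)\leq 2\exp(-t^2/(2K^2 V_n)),$$
and since an i.i.d.\ family is associated, (\ref{NW0}) applied to $W_i=U_{z_i}$ (for which $\|\sum_{i=1}^n W_i\|_2=\sqrt{V_n}$) yields
$$\PP\Bigl(\max_{1\leq j\leq n}|S_j|\geq \lambda\sqrt{V_n}\Bigr)\leq 4\exp\bigl(-(\lambda-\sqrt{2})^2/(2K^2)\bigr).$$
To make Borel--Cantelli succeed I pick the subsequence $n_k:=\min\{n:V_n\geq c^k\}$ for a fixed $c>1$. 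The single-step jump $V_{n_k}-V_{n_k-1}\leq 2M_{n_k-1}+1\leq 2\sqrt{V_{n_k-1}}+1$ (using $M_n^2\leq V_n$) is negligible, so $V_{n_k}\sim c^k$, $V_{n_k}/V_{n_{k-1}}\to c$, and $\log n_k\geq (k/2)\log c$. Taking $\lambda_k:=K(1+\varepsilon)\sqrt{2\log\log n_k}$ makes the right-hand side of order $(\log n_k)^{-(1+\varepsilon)^2}$, summable in $k$; Borel--Cantelli produces $\max_{j\leq n_k}|S_j|\leq K(1+\varepsilon)\sqrt{2V_{n_k}\log\log n_k}$ eventually. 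For $n\in[n_{k-1},n_k]$, monotonicity of $V_n$ and of $\log\log n$ together with $V_{n_k}/V_{n_{k-1}}\to c$ give $\limsup_n|S_n|/\sqrt{2V_n\log\log n}\leq K(1+\varepsilon)\sqrt c$ a.s., and letting $\varepsilon\downarrow 0$ and $c\downarrow 1$ through countable sequences yields (\ref{lil0}). The second assertion of 1) follows by dividing (\ref{lil0}) by $n$.

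For Part 2(a), association implies that all pairwise covariances are nonnegative, so $\sigma_n^2$ is non-decreasing, and by stationarity the single-step bound $\sigma_n^2-\sigma_{n-1}^2\leq 2\sigma_{n-1}\|U_0\|_2+\|U_0\|_2^2$ holds. Setting $n_k:=\min\{n:\sigma_n^2\geq c^k\}$ for $c>1$, this shows $\sigma_{n_k}\sim c^{k/2}$. Applying (\ref{NW0}) with Chebyshev on the right yields
$$\PP\Bigl(\max_{j\leq n_k}|S_j|\geq \lambda\sigma_{n_k}\Bigr)\leq\frac{2}{(\lambda-\sqrt{2})^2};$$
taking $\lambda_k:=(1+\eta)(\log\sigma_{n_k})^{\frac12+\varepsilon}$ produces a series of order $1/k^{1+2\varepsilon}$, which is summable. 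Borel--Cantelli followed by interpolation on $[n_{k-1},n_k]$ (using $\sigma_{n_k}/\sigma_{n_{k-1}}\to\sqrt c$ and $\log\sigma_{n_k}/\log\sigma_{n_{k-1}}\to 1$) gives (\ref{casea}) after letting $c\downarrow 1$ and $\eta\downarrow 0$. For 2(b), (\ref{norm2}) together with the summability hypothesis yields $\sigma_n\leq\sqrt{C V_n}\leq\sqrt{C}\,n$, so $\log\sigma_n=O(\log n)$; substituting in (\ref{casea}) gives (\ref{lil00}). The LLN under $V_n\leq Cn^2(\log n)^{-(1+\eta)}$ then follows by choosing $\varepsilon<\eta/2$ in (\ref{lil00}), which yields $|S_n|/n=O((\log n)^{\varepsilon-\eta/2})\to 0$.

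The main obstacle is the interpolation step: the exponential (resp.\ polynomial) tail bounds are summable only along rapidly growing subsequences, so $(n_k)$ cannot be taken as a function of $n$ alone but must be tuned to the actual growth of $V_n$ (resp.\ $\sigma_n$). One must then verify that the single-step jumps of $V_n$ (controlled by $M_n\leq\sqrt{V_n}$) and of $\sigma_n^2$ (controlled by stationarity) are negligible relative to $c^k$; this is what allows the interpolation to lose only a multiplicative factor tending to $1$ as $c\downarrow 1$, and ultimately produces the sharp constants $K$ in (\ref{lil0}) and $1$ in (\ref{casea}).
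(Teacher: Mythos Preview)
Your proof is correct and follows essentially the same route as the paper: both combine Hoeffding's inequality (in the i.i.d.\ case) or Chebyshev (in the associated case) with the Newman--Wright maximal inequality~(\ref{NW0}), apply Borel--Cantelli along a subsequence $(n_k)$ chosen so that $\sigma_{n_k}$ (equivalently $\sqrt{V_{n_k}}$ in Case~1) grows geometrically, interpolate on $[n_{k-1},n_k]$, and finally let the ratio $c\downarrow 1$. The only cosmetic difference is that the paper sets up a single abstract framework (a condition~(\ref{condSer}) on a sequence $(\lambda_n)$) and then specializes $\lambda_n$ in each case, whereas you treat the two cases in parallel; also, for the interpolation in Part~1 you implicitly need $\log\log n_k/\log\log n_{k-1}\to 1$, which follows from the two-sided bound $c^{k/2}\leq n_k\leq c^k$ (the upper bound coming from $V_n\geq n$), a point the paper handles via $\sigma_n\leq n$.
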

\begin{proof} {\it A)} Recall that $\sigma_n = \|\sum_{i=0}^{n-1} U_{z_i}\|_2$. 
In case 2) we may assume $\|U_{\0}\|_2=1$, and then
in all cases $\sigma_n \leq n$ and by association $\sigma_n\ge   n^\frac12$.
We have in case 1) $\sigma_n = \sqrt{V_n}$ and in case 2b), for associated variables, 
by (\ref{norm2}): $\sigma_n  \leq (\sum_\p \langle U_\p , U_{\0}\rangle)^{\frac12} \, \sqrt{V_n}$.
By association, $\sigma_n$ is non-decreasing and tends to infinity.

For $\rho>1$, let $n_k = n_k(\rho)$ be a strictly increasing sequence of integers such that $\rho^k < \sigma_{n_k} \le \rho^{k+1}$.
Since $1 \leq \sigma^2_{k+1} - \sigma^2_k \leq 1 + 2k$, such a sequence exists after a certain rank. By the choice of $(n_k)$ we have 
\begin{eqnarray}
\rho^k < \sigma_{n_k} \le \rho^{k+1} < \sigma_{n_{k+1}} \leq \rho^{k+2}. \label{rhok}
\end{eqnarray}
Moreover, we have $\sigma_{n_{k+1}}/ \sigma_{n_k} \leq \rho^2$ and, since $\sigma_n \leq n$, $n_k \geq \rho^k$. 
Let $(\lambda_n)$ be a non decreasing sequence of positive numbers such that 
\begin{eqnarray}
&\lambda_{n_k} > \sqrt 2, \ \limsup_k \lambda_{n_{k+1}} / \lambda_{n_k} \le1, \nonumber \\
&\sum_k \PP\big (\big |\sum_{i=0}^{n_k-1}U_{z_i} \big| \geq \, (\lambda_{n_k} - \sqrt 2) \,\|\sum_{i=0}^{n_k-1} U_{z_i} \|_2 \big) < \infty. \label{condSer}
\end{eqnarray}
By the previous inequalities and by Newman-Wright's inequality (\ref{NW0}) for the sequence of centered associated random variables
\footnote{as it is a subset of a set of associated r.v.'s} $(W_i) = (U_{z_i})$, we have
$$\sum_k \PP(\max _{0 \leq j \leq n_{k }-1} \big|\sum_{i=0}^{j} U_{z_i} \big| \geq \lambda_{n_k} \, \|\sum_{i=0}^{n_{k}-1} U_{z_i} \|_2)
\leq 2 \sum_k \PP(|\sum_{j=0}^{n_{k}-1} U_{z_j} |\geq (\lambda _{n_k} - \sqrt{2})\| \sum_{j=0}^{n_{k}-1} U_{z_j} \|_2) < +\infty.$$
By the Borel-Cantelli lemma, it follows:
$$\limsup_k {\max _{0\le j\le n_{k+1}-1} \big|\sum_{i=0}^{j}U_{z_i} \big| \over \, \lambda_{n_{k+1}} \, \sigma_{n_{k+1}}} \leq 1, \PP\text{-a.e.}$$
Hence $\PP$-a.e.
\begin{eqnarray}
&&\limsup_k\frac{\max_{0 \le j <n_{k+1}-1}|\sum_{i=0}^{j} U_{z_i}|} {\lambda_{n_k} \sigma_{n_k}} 
\leq \limsup_k \bigl(\frac{\lambda_{n_{k+1}}} {\lambda_{n_k}} \frac{\sigma_{n_{k+1}}} {\sigma_{n_{k}}}\bigr) \leq \rho^2. \label{rho2}
\end{eqnarray}

Observe that, if $|S_i| > \rho^2 \lambda_i \sigma_i$, for some $i \in [n_k, n_{k+1}[$, then $\max_{0 \leq j<n_{k+1}}|S_j| > \rho^2 \lambda_{n_k} \sigma_{n_k}$. 
This shows: 
\begin{eqnarray*}
\{|S_n|> \rho^2 \lambda_{n} \sigma_n, \, \text{i.o.} \}\subset \{\max_{0 \leq j < n_{k+1}}|S_j| > \rho^2\lambda_{n_k} \sigma_{n_k},\, \text{i.o.} \}.
\end{eqnarray*}
By this inclusion and (\ref{rho2}) it follows: $\displaystyle \limsup_n \frac{|\sum_{i=0}^{n-1} U_{z_i}|} {\lambda_n \sigma_n} \leq \rho^2, \, \PP\text{-a.e.}$

Taking $\rho =\rho_n$ with $\rho_n \downarrow 1$, we obtain
\begin{eqnarray}
\displaystyle \limsup_n \frac{|\sum_{i=0}^{n-1} U_{z_i}|} {\lambda_n \sigma_n} \leq 1, \, \PP\text{-a.e.} \label{limsup0}
\end{eqnarray}

{\it B) Choice of a sequence $(\lambda_k)$ such that (\ref{condSer}) is satisfied.}

{\it Case 1)} 

Suppose that the $U_k$'s are i.i.d. r.v.'s.
Recall that if $(W_j, j \geq 1)$ are centered bounded sequence of independent random variables on $(\Omega, \PP)$, 
for any finite sum of the $W_j$'s it holds by Hoeffding's inequality for differences of martingale (\cite{H63}), for every $\varepsilon > 0$:
\begin{eqnarray}
\PP(|\sum_j W_j|>\varepsilon) \le 2 \exp(- \frac12 \frac{\varepsilon^2}{\sum_j \|W_j\|_\infty^2}). \label{Hoeffding0}
\end{eqnarray} 
We apply it to the family $(N_n(\el) U_\el, \, \el \in \Z^d)$. From the hypotheses, we have:
$$\sum_\el \|N_n(\el) U_\el\|_\infty^2 \leq K^2\sum_\el N_n^2(\el) =K^2 V_n.$$
With $\varepsilon = (\lambda - \sqrt 2) \sqrt{V_{n}}$, (\ref{Hoeffding0}) implies:
\begin{eqnarray*}
&&\PP\big ( \big|\sum_\ell N_{n}(\ell) \, U_\el \big| \geq (\lambda - \sqrt 2) \sqrt{V_{n}} \big ) \\
&&\leq 2 \exp\big (-\frac12 (\lambda - \sqrt 2)^2 \frac{ V_{n} } {K^2 V_{n}} \big) = 2 \exp\big (- {1 \over 2 K^2} (\lambda - \sqrt 2)^2 \big).
\end{eqnarray*}
Let $c, \delta$ be such that $c > \delta > K^2$. In the previous inequality, we take 
$$\lambda= \lambda_n = (2c \log\log n)^\frac12.$$
Let $k(c,\delta)$ be such that $\lambda_{n_k}>\sqrt 2$ and $c(1-\frac2{\sqrt{c\log \log n_k}}) \ge \delta >1$, for $k \geq k(c,\delta)$.
We have:
\begin{eqnarray*}
&&\sum_{k=k(c,\delta)}^\infty \PP\big (\big |\sum_{i=1}^{n_k-1}U_{z_i} \big| \geq \, (\lambda_{n_k} - \sqrt 2) \,\|\sum_{i=1}^{n_k-1} U_{z_i} \|_2 \big) 
\leq 2 \sum_{k=k(c,\delta)}^\infty  \exp\big (- {1 \over 2 K^2} {(\lambda_{n_k} - \sqrt{2})^2}\big) \\
&&\leq \frac{2}{\exp K^2}\sum_{k=k(c,\delta)}^\infty  \exp \big (-{c \over K^2}\log \log n_k) (1-\frac2{\sqrt{c\log\log n_k}}) \big) \\
&&\leq\frac{2}{{\exp K^2}}\sum_{k=k(c,\delta)}^\infty \frac{1}{(k\log\rho)^{\delta \over K^2}}<\infty.
\end{eqnarray*}
Now we can apply (\ref{limsup0}). It follows: 
$$\limsup_n \frac{|\sum_{i=0}^{n-1} U_{z_i}|}{\sqrt{2c (\log\log n) V_n}}\leq 1, \PP\text{-a.e.}$$
Taking $c = c_n$ with $c_n \downarrow K^2$, we get (\ref{lil0}).

\vskip 3mm
\goodbreak
{\it Case 2)} 

For general associated r.v.'s, we use simply that 
$\PP\big (\big |\sum_{i=0}^{n-1}U_{z_i} \big| \geq \, \lambda \,\|\sum_{i=0}^{n-1} U_{z_i} \|_2 \big) \leq \frac1{\lambda^2}$.

We take $\lambda_n = (\log \sigma_n)^{\frac12+\varepsilon}$, with $\varepsilon > 0$. By (\ref{rhok}) we have $\lambda_{n_k} \geq (k \log \rho)^{\frac12+\varepsilon}$, 
and therefore, for a constant $C_1$: $\sum_k \frac1{\lambda_{n_k}^2} \leq C_1 \sum_k k^{-(1+2\varepsilon)} < +\infty$; hence condition (\ref{condSer}). 

Moreover we have $k \log \rho \leq \log \sigma_{n_k} \leq \log \sigma_{n_{k+1}} \leq (k+2) \log \rho$; hence
$${\lambda_{n_{k+1}} \over \lambda_{n_k}} = \bigl({\log \sigma_{n_{k+1}} \over \log \sigma_{n_k)}}\bigr)^{\frac12 + \varepsilon}
\leq (1 + \frac2{k})^{\frac12 + \varepsilon} \to 1.$$
By (\ref{limsup0}), this proves (\ref{casea}) in 2a)

For case 2b) we have 
$\sigma^2_n \leq  V_n \sum_\p \langle U_\p , U_{\0}\rangle$ and $\sigma_n \leq n$, 
hence it yields (\ref{lil00}). The last conclusion in case 2b) is now clear. Remark that it follows also from Proposition \ref{LLN1}. \eop
\end{proof}

\subsection{A Glivenko-Cantelli type theorem}

\

{\bf Empirical process}

Let us consider a random field of r.v.'s $(X_\el, \el \in \Z^d)$ on $(\Omega, \mathcal F,\PP)$ with common distribution function $F$.
Let $(z_k)\subset\mathbb Z^d$ be a sequence with self-intersections $(V_n)$.

{\it Notation.} We say that $(X_\el, \el \in \Z^d)$ satisfies a Glivenko-Cantelli theorem along a sequence $(z_k)$ in $\Z^d$ if
$$\lim_n \sup_s |\frac1n \sum_{k=1}^n {\bf 1}_{(-\infty,s]} (X_{z_k}(\omega)) - F(s)| = 0, \text{ for } \PP\text{-a.e.} \omega.$$
We show now a Glivenko-Cantelli theorem along a sequence $(z_k)$ under various hypotheses on $(z_k)$ and on $(X_\el)$ (mixing,  i.i.d., associated or PQD). 

Le $(X_\el, \el \in \Z^d)$ be a r.f. Denoting by $\sigma(X_\el)$ the $\sigma$-algebra generated by the random variable $X_\el$, 
we define a coefficient of mixing by 
\begin{eqnarray}
\gamma(\el) := \sup_{r \in \Z^d}\sup_{A \in \sigma(X_{\r}), \, B \in \sigma(X_{\el + \r})} |\PP(A \cap B) - \PP(A) \PP(B)|. \label{coefMix0}
\end{eqnarray}
Observe that for every $s, t \in \R$ it holds: 
\begin{eqnarray}
\sup_{\r \in \Z^d} |\langle 1_{X_\r \leq s} - \PP(X_\r \leq s), 1_{X_{\el+\r} \leq t} - \PP(X_{\el+\r} \leq t) \rangle| \leq \gamma(\el), \forall \el \in \Z^d. \label{coefMixEmp}
\end{eqnarray}
By (\ref{coefMixEmp}) and Proposition \ref{LLN1}, we get:
\begin{thm} \label{ratethm} Let $(z_k)$ be such that $\displaystyle V_n \leq C_1 {n^2 \over (\log n)^\beta}$, for constants $C_1 > 0, \beta$.
\hfill \break If $\sum_{\el \in \Z^d} \gamma(\el) < +\infty$ and $\beta > 1$,
\hfill \break or if the r.f. is stationary and $\sum_{\el \in \Z^d} \gamma(\el)^\zeta < +\infty$, for some $\zeta \in [1, 2]$ and $\beta > \zeta$, 
\hfill \break then $(X_\el, \el \in \Z^d)$ satisfies a Glivenko-Cantelli theorem along $(z_k)$.
\end{thm}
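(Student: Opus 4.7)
The plan is to reduce the uniform-in-$s$ Glivenko--Cantelli conclusion to pointwise-in-$s$ a.s.\ convergence via Lemma \lemref{ChungLem}, and then to obtain that pointwise convergence by applying Proposition \propref{LLN1} to the centered indicator random field at each fixed threshold $s$.

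First I would fix $s \in \R$ and introduce $U_\el^s := \mathbf{1}_{X_\el \leq s} - F(s)$. These variables are centered, $\sigma(X_\el)$-measurable, and uniformly bounded by $K = 1$. Since $\{X_\r \leq s\} \in \sigma(X_\r)$ and $\{X_{\r+\el} \leq s\} \in \sigma(X_{\r+\el})$, the bound (\ref{coefMixEmp}) yields
\begin{eqnarray*}
\sup_{\r \in \Z^d} |\langle U_{\r+\el}^s, U_\r^s \rangle| \leq \gamma(\el), \qquad \forall \el \in \Z^d,
\end{eqnarray*}
uniformly in $s$. Under the first hypothesis ($\sum_\el \gamma(\el) < \infty$ and $\beta > 1$), part 1) of Proposition \propref{LLN1} applies directly. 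Under the second hypothesis the r.f.\ $(U_\el^s)$ is stationary with $\sum_\el |\langle U_\el^s, U_\0^s \rangle|^\zeta \leq \sum_\el \gamma(\el)^\zeta < \infty$ and $\beta > \zeta$, so part 2) applies. In either case, writing $F_n(s,\omega) := \tfrac1n \sum_{k=0}^{n-1} \mathbf{1}_{X_{z_k}(\omega) \leq s}$, we obtain for each $s$ a $\PP$-null set $\mathcal N_s$ off of which $F_n(s,\omega) \to F(s)$.

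To promote this pointwise-in-$s$ statement to uniform convergence, I would invoke Lemma \lemref{ChungLem}. Let $Q \subset \R$ be countable and dense, and let $J$ denote the (at most countable) set of jumps of $F$. For each $s \in J$, Chung's lemma also requires $F_n(s,\omega) - F_n(s^-,\omega) \to F(s) - F(s^-)$; this follows from the same argument applied to the field $V_\el^s := \mathbf{1}_{X_\el < s} - F(s^-)$, which is again $\sigma(X_\el)$-measurable, so the same covariance bound by $\gamma(\el)$ persists and Proposition \propref{LLN1} delivers the pointwise LLN. Taking the union of the countably many exceptional null sets indexed by $s \in Q \cup J$ (for the fields $U^s$) and $s \in J$ (for the fields $V^s$) produces a single $\PP$-null set $\mathcal N$ outside of which all hypotheses of Lemma \lemref{ChungLem} are satisfied, each $F_n(\cdot,\omega)$ being itself a right-continuous distribution function. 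Lemma \lemref{ChungLem} then gives $\sup_s |F_n(s,\omega) - F(s)| \to 0$ for every $\omega \notin \mathcal N$.

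The only step of real substance is the first: observing that the mixing coefficient $\gamma(\el)$ of $(X_\el)$ dominates the covariances of the indicator field $(U_\el^s)$ \emph{uniformly} in the threshold $s$ via (\ref{coefMixEmp}), so that the hypotheses of Proposition \propref{LLN1} transfer from $(X_\el)$ to $(U_\el^s)$ without loss. Once this is in place, the remainder is routine bookkeeping with a countable family of null sets plus Chung's lemma, so I do not foresee a serious obstacle.
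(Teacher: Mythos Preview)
Your proposal is correct and follows exactly the paper's approach: the paper's proof is the single line ``By (\ref{coefMixEmp}) and Proposition \ref{LLN1}'', and you have simply unpacked this, supplying the details (the field $U_\el^s$, the countable union of null sets, and the appeal to Lemma \ref{ChungLem} for uniformity) that the paper leaves implicit but handles identically in the analogous proof of Theorem \ref{indicat}.
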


Using Proposition \ref{LIL1}, we consider now the  i.i.d. and associated cases.
\begin{thm} \label{indicat} 
a) If $(X_\el)_{\el \in \Z^d}$ is a r.f. of i.i.d. r.v.'s, then under the condition $V_n = o(n^2 \, (\log\log n)^{-1})$ 
it satisfies a Glivenko-Cantelli theorem along $(z_k)$.

b) If $(X_\el)_{\el\in\mathbb Z^d}$ is a strictly stationary r.f. of associated r.v.'s such that $\sum_{\el} \langle X_\el, X_\0\rangle$ converges, then,
under the condition $V_n=O(n^2 \log^{-(1+\eta)}n)$ for some $\eta>0$, for a.e. $\omega$, we have for each continuity point $s$ of $F$:
\begin{eqnarray}
\lim_n \frac1n \sum_{k=0}^{n-1} {\bf 1}_{(-\infty,s]} (X_{z_k}(\omega))=F(s). \label{limGC1}
\end{eqnarray}
If $F$ is continuous, the convergence is uniform in $s$.
\end{thm}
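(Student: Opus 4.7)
For each fixed $s\in\R$ introduce the centered, uniformly bounded r.f.\
$U_\el^{(s)}:=\mathbf{1}_{X_\el\le s}-F(s)$, $\el\in\Z^d$, for which $\|U_\0^{(s)}\|_\infty\le 1$ and $\sum_{k=0}^{n-1} U_{z_k}^{(s)}=n(F_n(s)-F(s))$. The plan for both parts is to apply the appropriate part of Proposition \ref{LIL1} to $(U_\el^{(s)})$ for $s$ ranging over a countable set, then invoke Chung's Lemma \ref{ChungLem} to upgrade pointwise convergence to uniform convergence.

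\textbf{Part (a).} Since $(X_\el)$ is i.i.d., so is $(U_\el^{(s)})$ for each $s$, with variance at most~$1$. The hypothesis $V_n=o(n^2/\log\log n)$ is exactly what Proposition \ref{LIL1}(1) requires, so for each $s$ we obtain $F_n(s)\to F(s)$ off a $\PP$-null set $N_s$. Let $Q\subset\R$ be a countable dense set and $J$ the (at most countable) set of atoms of $F$; off the null set $N:=\bigcup_{s\in Q\cup J} N_s$, $F_n(s)\to F(s)$ for every $s\in Q$. For each atom $s\in J$, applying the same SLLN to the i.i.d.\ indicators $\mathbf{1}_{X_{z_k}<s}-F(s^-)$ yields $F_n(s^-)\to F(s^-)$, hence $F_n(s)-F_n(s^-)\to F(s)-F(s^-)$. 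Lemma \ref{ChungLem} then promotes this to $\sup_s |F_n(s)-F(s)|\to 0$ on $N^c$.

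\textbf{Part (b).} The r.f.\ $(U_\el^{(s)})$ is stationary, centered, uniformly bounded and, because $x\mapsto\mathbf{1}_{x\le s}$ is monotone, inherits association from $(X_\el)$ (non-increasing coordinate-wise transformations preserve association, up to a global sign flip that does not affect covariances of pairs of non-decreasing functions). To invoke Proposition \ref{LIL1}(2b) with $V_n=O(n^2(\log n)^{-(1+\eta)})$ it remains to verify
\[\sum_{\el\in\Z^d}\Cov(\mathbf{1}_{X_\el\le s},\mathbf{1}_{X_\0\le s})<\infty\qquad\text{for each continuity point }s\text{ of }F.\]
Once this summability is in hand, Proposition \ref{LIL1}(2b) yields $F_n(s)\to F(s)$ $\PP$-a.s.\ at each such $s$, which is (\ref{limGC1}). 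When $F$ is continuous every point is a continuity point, and Lemma \ref{ChungLem} applied to a countable dense set of $s$'s delivers the uniform conclusion.

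\textbf{Main obstacle.} The technical heart of part (b) is the transfer from summable covariances of the $X_\el$'s to summable covariances of their indicator transforms. The natural route is to approximate $\mathbf{1}_{\cdot\le s}$ by a Lipschitz $\phi_\varepsilon$ with $\|\phi_\varepsilon'\|_\infty\le 1/\varepsilon$, use Hoeffding's identity (item~3b) via the PQD property implied by association to obtain $|\Cov(\phi_\varepsilon(X_\el),\phi_\varepsilon(X_\0))|\le\varepsilon^{-2}\Cov(X_\el,X_\0)$, bound the replacement error by $O(\varepsilon)$ under a boundedness hypothesis on the density of $F$ near $s$, and optimize in $\varepsilon$ to obtain a bound of order $\Cov(X_\el,X_\0)^{1/3}$. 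Closing the sum then requires either summability of $\Cov(X_\el,X_\0)^{1/3}$ or a sharper associated-variable inequality; this is the point where the formal structure of the argument meets a genuine analytic estimate on the underlying r.f., and is where I expect the proof to be most delicate.
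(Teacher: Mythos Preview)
Your Part (a) is correct and matches the paper's argument essentially line for line.

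For Part (b) you have correctly located the obstacle but not overcome it, and in fact the route you sketch cannot close under the stated hypotheses. From $\sum_\el \langle X_\el, X_\0\rangle < \infty$ alone one cannot in general deduce $\sum_\el \Cov(\mathbf{1}_{X_\el \le s}, \mathbf{1}_{X_\0 \le s}) < \infty$; your $\varepsilon$-optimization yields at best a power of $\langle X_\el, X_\0\rangle$, which need not be summable, and no ``sharper associated-variable inequality'' of the required strength is available in this generality. Hence applying Proposition \ref{LIL1}(2b) directly to the indicator field is a dead end.

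The paper sidesteps the issue with a sandwich argument. For a continuity point $s$ and $\delta>0$, it takes piecewise-linear approximations $h_{\delta,s}$ and $h_{\delta,s+\delta}$ (Lipschitz constant $1/\delta$) satisfying $h_{\delta,s+\delta}(x)\le \mathbf{1}_{x>s}\le h_{\delta,s}(x)$. For each \emph{fixed} $\delta$, Hoeffding's identity gives $\Cov(h_{\delta,\tau}(X_\el),h_{\delta,\tau}(X_\0))\le\delta^{-2}\langle X_\el,X_\0\rangle$, so the correlation series of the \emph{Lipschitz} field is summable and Proposition \ref{LIL1}(2b) applies to it rather than to the indicators. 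This yields a.s.\ convergence of $\frac1n\sum_k h_{\delta,\tau}(X_{z_k})$ to $\E[h_{\delta,\tau}(X_\0)]$ for $\tau=s,s+\delta$; the sandwich then traps $\liminf$ and $\limsup$ of the empirical indicator average between $1-F(s)\pm(F(s+\delta)-F(s-\delta))$, and continuity of $F$ at $s$ finishes. The key point is that $\delta$ is held fixed while $n\to\infty$ and only sent to $0$ afterwards, so no uniform-in-$\delta$ covariance bound is ever needed.
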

\begin{proof}
a) Denote by $F_n(s)(\omega)$ the means $\frac1n \sum_{k=0}^{n-1} {\bf 1}_{(-\infty,s]} (X_{z_k}(\omega))$.
Let $Q$ be a dense countable set of continuity points of $F$.
 
For every $s\in Q$, by the assumption on $V_n$ and Proposition \ref{LIL1}, there is a null set $N(s)$ such that, for a sequence $\varepsilon_n$ tending to 0,
for every $\omega\not\in N(s)$,
$$|F_n(s)(\omega) - F(s)| \leq \varepsilon_n (V_n \log \log n)^{-\frac12} |\sum_{k=0}^{n-1} {\bf \bigl(1}_{(-\infty,s]}(X_{z_k}) - F(s)\bigr)| \to 0.$$
Then $F_n(s)(\omega) \to F(s)$ for every $\omega$ outside the null set $N:=\cup_{s\in Q} N(s)$ and for $s\in Q$.

Similarly by Proposition \ref{LIL1}, for every $s$ in the set $J$ of jumps of $F$, we have $F_n(s)(\omega)-F_n(s^-)(\omega)\to F(s)-F(s^-)$ a.e.
As $J$ is countable, this convergence holds for every $s\in J$ and $\omega\not \in \tilde N$, where $\tilde N$ is a null set.

Outside the null set $N\cup\tilde N$, Lemma \ref{ChungLem} applied with $Q$ and $J$ implies the result.

b) We consider now the case of a strictly stationary random field of associated r.v.'s. Let $s$ be a continuity point of the common distribution $F$. 
For every $\epsilon>0$ there exists  $\delta>0$, such that $F(s+\delta)-F(s-\delta)\le\epsilon$.
As in \cite{Yu93z}, for $\delta>0$ and $s$, define the approximated step function $h_{\delta,s}$ by
$h_{\delta,s} (x)=0$, if $x\le s-\delta$ and $h_{\delta,s} (x)=1+\frac{x-s}{\delta}$ if $s-\delta\le x\le s$, otherwise, 
$h_{\delta,s} (x)=1$. It is a non decreasing continuous function with $h'_{\delta, s}(x)=1/\delta$ for $s-\delta<x<s$. It follows from the above Hoeffding's identity 
applied to this approximated step function (see \cite{Am17}):
\begin{eqnarray*}
{\text Cov}  (h_{\delta,s}(X_\el),h_{\delta,s}(X_\0))\le \delta^{-2} \langle X_\el, X_\0\rangle,\\
{\text Cov}  (h_{\delta,s+\delta}(X_\el),h_{\delta,s+\delta}(X_\0))\le \delta^{-2} \langle X_\el, X_\0\rangle.
\end{eqnarray*}
By association and non decreasing, $\big ( h_{\delta,s}(X_\el)\big)$ as well as $\big ( h_{\delta,s+\delta}(X_\el)\big)$ are stationary r.f.s of associated r.v.'s,
and we may apply Proposition \ref{LIL1} to their centered versions (also associated).
The condition simply reads, for $\tau=s, s+\delta$:
$$\sum_{\el}{\text Cov}  (h_{\delta,\tau}(X_\el),h_{\delta,\tau}(X_\0))\le \delta^{-2}\sum_\el \langle X_\el, X_\0\rangle<\infty.$$
We put $\overline S_n=\sum_{k=0}^{n-1} h_{\delta,s}(X_{z_k})$
and $\underline S_n =\sum_{k=0}^{n-1} h_{\delta,s+\delta}(X_{z_k})$. By $h_{\delta, s+\delta}(x) \le {\bf 1}_{\{x>s\}}\le h_{\delta, s}(x)$,
it holds $\underline S_n\le \sum_{k=0}^{n-1} {\bf 1}_{(s,\infty)} (X_{z_k}) \le \overline S_n$.
Hence by Proposition \ref{LIL1}, we have almost everywhere
$\frac1n\underline S_n\to \E[h_{\delta, s+\delta}(X_\0) ]\ \text{and}\ \frac1n\overline S_n\to \E[h_{\delta, s}(X_\0)]$.
Since
\begin{eqnarray*}
&\E[h_{\delta, s}(X_\0)]\le F(s)-F(s-\delta)+1-F(s)\le \epsilon+ 1-F(s),\\
&\E[h_{\delta, s+\delta}(X_\0) ]\ge 1-F(s+\delta)=1-F(s)-(F(s+\delta)-F(s))\ge 1-F(s)-\epsilon,
\end{eqnarray*}
we conclude 
$$1-F(s)-\epsilon\le \liminf_n \frac1n \sum_{k=0}^{n-1} {\bf 1}_{(s,\infty)} (X_{z_k})
\le \limsup_n\frac1n \sum_{k=0}^{n-1} {\bf 1}_{(s,\infty)} (X_{z_k})\le 1-F(s)+\epsilon.$$
Subtracting the $1$'s and taking $\epsilon\to 0$, we get (\ref{limGC1}). \eop
\end{proof}

{\it PQD variables.}

The result shown for associated variables can be extended to the class of PDQ variables, but with a stronger condition on the local times of the sequence $(z_k)$.
\begin{proposition} \label{PDQthm} Let $(U_\el)$ be a centered stationary random field of pairwise PQD r.v.'s such that 
$\sum_\el \langle U_{\el}, U_\0 \rangle$ converges. Let $(z_k)$ be a sequence of points
with maximal local times sequence $(M_n)$. If $\sum_{n\geq 1}{M_n \over n^2} < +\infty$,
then $\frac1n (U_{z_0} +\cdots+ U_{z_{n-1}})$ converges a.e. to $0$.
\end{proposition}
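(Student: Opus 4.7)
The plan is to adapt the subsequence-plus-interpolation strategy used in Proposition \ref{LLN1} and Proposition \ref{LIL1}, with two key substitutions: the PQD hypothesis replaces association and provides non-negative pairwise covariances; and the quantitative hypothesis is now expressed via the maximal local time sequence $M_n$, rather than $V_n$.

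First I would derive the basic $L^2$ bound. Writing $S_n = \sum_\el N_n(\el) U_\el$ and using stationarity,
\begin{equation*}
\|S_n\|_2^2 = \sum_\el \langle U_\el, U_\0\rangle \, \sum_\r N_n(\r+\el) \, N_n(\r).
\end{equation*}
By Hoeffding's identity applied to $f=g={\rm id}$, pairwise PQD forces $\langle U_\el, U_\0\rangle \ge 0$ for every $\el$, so every term in the outer sum is non-negative. Bounding $\sum_\r N_n(\r+\el) N_n(\r) \le M_n \sum_\r N_n(\r) = nM_n$ and using the summability of the covariance series yields
\begin{equation*}
\|S_n\|_2^2 \le C_0 \, n\, M_n, \quad C_0 = \sum_\el \langle U_\el,U_\0\rangle<\infty.
\end{equation*}

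Next I would extract a.s.\ convergence along the dyadic subsequence $n_k = 2^k$. By Chebyshev's inequality, $\PP(|S_{2^k}|>\epsilon 2^k) \le C_0 M_{2^k}/(\epsilon^2 2^k)$. Since $M_n$ is non-decreasing, Cauchy condensation gives
\begin{equation*}
\sum_k \frac{M_{2^k}}{2^k} \;\le\; 4 \sum_{n\ge 1} \frac{M_n}{n^2} \;<\;\infty,
\end{equation*}
so Borel--Cantelli yields $S_{2^k}/2^k\to 0$ a.s.

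The final step is to interpolate between consecutive dyadic points, bounding $\max_{2^k \le m < 2^{k+1}}|S_m - S_{2^k}|/2^k$ so it tends to $0$ a.s. The key observation is that the non-negativity of pairwise PQD covariances makes $(a,b)\mapsto \E[(S_b-S_a)^2]$ superadditive: the cross term $\sum_{j<b,\,k\ge b}\langle U_{z_j},U_{z_k}\rangle$ is non-negative. This lets me invoke a maximal inequality in Móricz's form, combined with the Step~1 bound $\E[(S_{2^{k+1}} - S_{2^k})^2] \le C_0 \cdot 2^k M_{2^{k+1}}$, to obtain a Borel--Cantelli-summable estimate for the dyadic-block maxima, hence a.s.\ convergence to $0$ for $S_n/n$.

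The main obstacle is precisely this interpolation step: Móricz's inequality in its standard Rademacher--Menchov form carries a $(\log n)^2$ factor, which is not obviously absorbed by the hypothesis $\sum M_n/n^2<\infty$ alone. The cleanest route is to appeal to a PQD analogue of the Newman--Wright maximal inequality~(\ref{NW0})---valid without the logarithmic loss under non-negative correlations---so that the Borel--Cantelli series reduces to $\sum_k M_{2^k}/2^k$, which the hypothesis supplies directly. Making this maximal estimate rigorous in the PQD setting (or, alternatively, engineering a sparser subsequence together with a sharper direct variance comparison) is where the real work of the proof lies.
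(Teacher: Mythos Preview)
Your first two steps are correct: the variance bound $\|S_n\|_2^2 \le C_0\, n\,M_n$ is right, and the Cauchy-condensation argument showing $\sum_k M_{2^k}/2^k < \infty$ from $\sum_n M_n/n^2 < \infty$ (using only that $M_n$ is non-decreasing) is clean. But the gap you flag in Step~3 is genuine, and your proposed fix does not work. The Newman--Wright maximal inequality~(\ref{NW0}) genuinely requires \emph{association}, not merely pairwise PQD: its proof uses that the partial maxima $\max_{j\le m} S_j$ are non-decreasing functions of the underlying variables and hence remain positively correlated with $S_n - S_m$, which is an association-type statement about joint distributions, not a pairwise one. There is no known PQD analogue of~(\ref{NW0}) without further hypotheses, so ``making this maximal estimate rigorous in the PQD setting'' is not a routine matter --- it would essentially require reproving a nontrivial strong law for PQD sequences.

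The paper bypasses the interpolation problem entirely by quoting exactly such a ready-made result: Birkel's SLLN for pairwise PQD sequences~\cite{Bir88}, which states that if $(Y_j)$ are centered pairwise PQD with finite variance, $\sup_j \E|Y_j|<\infty$, and $\sum_{j\ge 1} j^{-2}\,\mathrm{Cov}(Y_j,\sum_{i=1}^j Y_i)<\infty$, then $n^{-1}\sum_{i=1}^n Y_i \to 0$ a.e. Applying this to $Y_j = U_{z_j}$, one only needs the covariance bound
\[
\mathrm{Cov}\Bigl(U_{z_j},\,\sum_{i=1}^j U_{z_i}\Bigr)
= \sum_{i=1}^j \langle U_{\0},\, U_{z_i - z_j}\rangle
\le M_j \sum_{\el} \langle U_{\0}, U_{\el}\rangle,
\]
which, combined with the hypothesis $\sum_j M_j/j^2<\infty$, gives the result in one line. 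So the paper's proof is not a variant of your subsequence-plus-interpolation scheme; it trades all of that machinery for a single citation to the PQD strong law, and the only computation is the covariance estimate above.
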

\proof We apply the following result of \cite{Bir88}: let $(Y_j: j \ge1)$ be a sequence of pairwise centered PQD r.v.'s. 
with finite variance. If $\sum_{j\ge1}j^{-2}\text{Cov}(Y_j, \sum_{i=1}^j Y_i)$ converges  and $\sup_j \E| Y_j| <\infty$, then $n^{-1} \sum_{i=1}^n Y_i \to 0$ \text{a.e.}

Taking for $Y_j$ the (still) pairwise PQD r.v.'s $U_{z_j}$, we get  the result,
since $\text{Cov}( U_{z_j}, U_{z_1}+\cdots+U_{z_j}) \leq M_j\sum_\el  \langle U_{\0}, U_\el\rangle$.
\eop

Now, we consider the empirical distribution. 
\begin{thm} \label{PDQemp} Let $(X_\el)$ be a centered strictly stationary random field of pairwise PQD r.v.'s with distribution function $F$ such that
$\sum_\el  \langle X_{\el}, X_\0 \rangle$ converges. Let $(z_k)$ be a sequence of points
with maximal local times sequence $(M_n)$. If $\sum_{n \geq1}{M_n \over n^2} < +\infty$,
then for each continuity point $s$ of $F$, we have for a.e. $\omega$:
$\lim_n \frac1n \sum_{k=0}^{n-1} {\bf 1}_{(-\infty,s]} (X_{z_k}(\omega))=F(s)$.

In particular, if $F$ is continuous, the above convergence is uniform over $s$.
\end{thm}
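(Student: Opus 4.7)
The strategy mirrors the associated-case proof of \thmref{indicat}(b), but with \propref{PDQthm} playing the role previously played by \propref{LIL1}. The main point is to reduce the indicator functions to Lipschitz non-decreasing ones, for which Hoeffding's identity transfers the summability of correlations, and the PQD property is preserved.

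First I would introduce, exactly as in the proof of \thmref{indicat}(b), the continuous non-decreasing Lipschitz approximations $h_{\delta,s}$ and $h_{\delta,s+\delta}$ with slope at most $1/\delta$, producing the two-sided envelope
\begin{equation*}
h_{\delta,s+\delta}(x)\le \mathbf 1_{\{x>s\}}\le h_{\delta,s}(x).
\end{equation*}
Since PQD is preserved by coordinate-wise non-decreasing functions (by the characterization cited just before \propref{PDQthm} and recorded as Theorem 4.4 of \cite{EPW}), the random fields $(h_{\delta,\tau}(X_\el))_{\el\in\Z^d}$ for $\tau\in\{s,s+\delta\}$ are strictly stationary and pairwise PQD; centering by a constant keeps them pairwise PQD.

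Next I would control the series of correlations of these new fields by Hoeffding's identity applied with $f=g=h_{\delta,\tau}$: since $|h_{\delta,\tau}'|\le 1/\delta$,
\begin{equation*}
0\le \text{Cov}(h_{\delta,\tau}(X_\el),h_{\delta,\tau}(X_\0))\le \delta^{-2}\langle X_\el,X_\0\rangle,
\end{equation*}
so $\sum_\el \text{Cov}(h_{\delta,\tau}(X_\el),h_{\delta,\tau}(X_\0))<\infty$ by hypothesis. With the assumption $\sum_n M_n/n^2<\infty$, \propref{PDQthm} applied to the centered fields $h_{\delta,\tau}(X_\el)-\E[h_{\delta,\tau}(X_\0)]$ sampled along $(z_k)$ gives, for $\tau=s$ and $\tau=s+\delta$,
\begin{equation*}
\frac1n\sum_{k=0}^{n-1}h_{\delta,\tau}(X_{z_k})\to \E[h_{\delta,\tau}(X_\0)]\quad \PP\text{-a.e.}
\end{equation*}

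Then I would conclude by the same sandwich argument as in \thmref{indicat}(b): using $\E[h_{\delta,s}(X_\0)]\le 1-F(s)+\varepsilon$ and $\E[h_{\delta,s+\delta}(X_\0)]\ge 1-F(s)-\varepsilon$ valid whenever $\delta$ is small enough that $F(s+\delta)-F(s-\delta)\le\varepsilon$ (possible at every continuity point $s$ of $F$), letting $\varepsilon\to 0$ yields $\frac1n\sum_{k=0}^{n-1}\mathbf 1_{(-\infty,s]}(X_{z_k})\to F(s)$ almost surely on a null set depending on $s$. Taking the countable union of null sets over a countable dense subset $Q$ of continuity points and over the (countable) set of jumps of $F$, \lemref{ChungLem} promotes pointwise convergence to uniform convergence; when $F$ is continuous, $Q=\R$ and $J=\emptyset$, so uniformity holds on $\R$. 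The only delicate point is verifying that the monotone Lipschitz approximation transports both PQD and the summability of correlations to the new fields; once Hoeffding's identity is invoked in the form stated in the preliminaries this is immediate, and the rest is a standard sandwich.
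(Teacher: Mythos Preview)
Your proposal is correct and follows essentially the same approach as the paper: the paper's proof simply states that the fields $h_{\delta,s}(X_\el)$ and $h_{\delta,s+\delta}(X_\el)$ remain stationary pairwise PQD, refers to the argument of \thmref{indicat}(b) with \propref{PDQthm} replacing \propref{LIL1}, and invokes \lemref{ChungLem} for uniformity. One small wrinkle in your write-up: your mention of the jump set $J$ is unnecessary (and unsupported), since the sandwich argument only yields convergence at continuity points; but since the theorem claims uniformity only when $F$ is continuous, in which case $J=\emptyset$ and any countable dense $Q$ suffices, this does not affect the conclusion.
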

\proof The r.f.s $h_{\delta, s}(X_\el)$ and $h_{\delta, s+\delta}(X_\el)$ are still stationary pairwise PQD. The proof is analogous to the proof of Theorem \ref{indicat}.
For the last statement, we use Lemma \ref{ChungLem}. \eop

{\it Remark.} If $M_n = O(n \, (\log n)^{-(1+\eta)})$, then $V_n=O(n^2 \, (\log n)^{-(1+\eta)})$. 

If $\displaystyle V_n \leq C {n^2 \over (\log n)^\beta}$, with $\beta > 2$, then $\displaystyle \sum_{n \geq1}{M_n \over n^2} < +\infty$.

As shown in Section \ref{exSect}, $\sum_{n\geq1}{M_n \over n^2}$ converges when the sampling is done along random walks, 
but diverges in some examples of sampling along ``deterministic'' random walks. 

\subsection{A sufficient condition for a FCLT for the sampled empirical process} \label{FCLT0sect}

\ 

After a Glivenko-Cantelli theorem for sampled empirical processes, we consider now the Functional Central Limit Theorem (FCLT). 
Let $(z_k)$ be in $\mathbb Z^d, d\ge1,$ with the associated quantities $N_n(\el)$, $M_n$ and $V_n$ defined by (\ref{notaVn}).

Before restricting to a r.f. of  i.i.d. r.v.'s, first we examine the variance in the more general situation where the series of correlations is absolutely summable.

{\it Kernel associated to a sequence $(z_k)$ and variance.}

Let $K_n$ be the kernel (which is a real even function on $\T^d$ depending on $n \geq 0$) defined by the equivalent formulas:
\begin{eqnarray}
K_n(\t) &=& |\sum_{k=0}^{n-1} e^{2\pi i \langle z_k, \t \rangle} |^2 = n +  2 \sum_{k = 1}^{n-1} \, \sum_{j = 0}^{n-k-1} \cos(2\pi \langle z_{k+j} - z_j, t \rangle) 
= |\sum_{\el \in \Z^d} N_n(\el) \, e^{2\pi i \langle \el, t \rangle}|^2
\nonumber \\
&=& n + 2 \sum_\el \bigl(\sum_{k = 1}^{n-1} \, \sum_{j = 0}^{n-k-1} 1_{z_{k+j} - z_j = \el}\bigr) \cos(2\pi \langle \el, t \rangle). \label{Vnp}
\end{eqnarray}
If $(X_\el, \el \in \Z^d)$ is a stationary r.f. such that $\sum_{\el \in \Z^d} |\langle X_\el, X_\0 \rangle| < +\infty$, the spectral density $\rho$ is continuous and we have:
$$\int |\sum_{k=0}^{n-1} X_{z_k}|^2 d\PP = \int_{\T^d} K_n(\t) \, \rho(\t) \, d\t \leq \|\rho\|_\infty V_n \leq (\sum_{\el \in \Z^d} |\langle X_\el, X_\0 \rangle|) V_n.$$
One can ask if there is an asymptotic variance, i.e., a limit for the normalised quantity 
$\displaystyle V_n^{-1} \int |\sum_{k=0}^{n-1} X_{z_k}|^2 d\PP$ which is bounded if the series of correlations is absolutely summable. 

The existence of asymptotic variance is shown in \cite{CohCo17} in the case of summation along a random walk. 
We will come back to the question of its positivity for transient random walks in Subsection \ref{sectRW}.
 
\vskip 3mm
{\bf Functional Central limit Theorem in the i.i.d. case}

The following result gives a sufficient condition for a Functional Central limit Theorem (FCLT) along a sequence $(z_k)$ in the i.i.d. case.

The standard Brownian bridge process $W^0(s)$ is the centered Gaussian process $W^0(s) := W(s) - s W(1)$ in $C(0,1)$, where  $W(s)$ is the Wiener process.
It has the properties $W^0(0)=W^0(1)=0$ and  $\E[W^0(s_1) W^0(s_2)]=s_1\wedge s_2 - s_1 s_2$. 

Let $(X_\k)_{\k\in\mathbb Z^d}$ be i.i.d. random variables with common probability distribution $F$ in $[0,1]$. We put $W_F^0 = W^0\circ F$.
Let $Y_n(s)$ be the random element in $D[0, 1]$ defined by
\begin{eqnarray*}
Y_n(s) = \frac1{\sqrt{V_n}}\sum_{k=0}^{n-1} \, [{\bf 1}_{X_{z_k} \leq s}- F(s)] 
= \frac1{\sqrt{V_n}}\sum_{\el \in \Z^d} N_n(\ell) \, [{\bf 1}_{X_{\el} \leq s} - F(s)].
\end{eqnarray*}
\begin{thm} \label{empirThm1} $Y_n(s) \to_{D[0, 1]} W_F^0(s)$, if $(z_k)$ satisfies the condition
\begin{eqnarray}
\lim_n {M^2_n \over V_n} = 0, \label{condi00}
\end{eqnarray}
\end{thm}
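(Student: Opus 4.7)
The plan is to verify the two standard requirements for weak convergence in $D[0,1]$: convergence of the finite-dimensional distributions and tightness. The key structural observation is that since $(X_\el)_{\el \in \Z^d}$ are i.i.d., the random functions $\xi_\el(\cdot) := {\bf 1}_{X_\el \leq \cdot} - F(\cdot)$ form an i.i.d.\ family indexed by $\el$, and for each fixed $s$
$$Y_n(s) = \frac1{\sqrt{V_n}} \sum_{\el \in \Z^d} N_n(\el)\, \xi_\el(s)$$
is a sum of independent centered bounded random variables.

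For the finite-dimensional distributions I would apply the Cram\'er--Wold device. A fixed linear combination rewrites as $\sum_i a_i Y_n(s_i) = V_n^{-1/2} \sum_\el N_n(\el) g(X_\el)$ for some bounded centered function $g$. Its variance equals $\Var(g(X_\0))$ independently of $n$, because $\sum_\el N_n^2(\el) = V_n$, while the individual summands are uniformly bounded by $\|g\|_\infty M_n/\sqrt{V_n}$, which tends to $0$ by (\ref{condi00}). The Lindeberg condition for triangular arrays of independent bounded variables is thus automatic, yielding a centered Gaussian limit; a direct computation gives $\E[Y_n(s) Y_n(t)] = F(s \wedge t) - F(s) F(t) = \E[W_F^0(s) W_F^0(t)]$, identifying the limit as the $F$-time-changed Brownian bridge.

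For tightness in $D[0,1]$ I plan to apply Billingsley's moment criterion to consecutive increments. Set $a = F(t) - F(s)$, $b = F(u) - F(t)$, and $\xi_\el = {\bf 1}_{s < X_\el \leq t} - a$, $\eta_\el = {\bf 1}_{t < X_\el \leq u} - b$. Expanding $(Y_n(t) - Y_n(s))^2 (Y_n(u) - Y_n(t))^2$ as a fourfold sum over $(\el_1,\el_2,\el_3,\el_4)$ and using independence across $\el$ together with $\E[\xi_\el] = \E[\eta_\el] = 0$, only terms where indices match in pairs or all four coincide contribute. The pair-matched contributions, bounded using $\sum_\el N_n^2(\el) = V_n$, give at most a constant multiple of $ab$; the diagonal contribution, bounded via $\sum_\el N_n^4(\el) \leq M_n^2 V_n$, gives at most a constant multiple of $(M_n^2/V_n)\, ab$. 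Since $ab \leq (F(u)-F(s))^2/4$, I expect a bound
$$\E[(Y_n(t) - Y_n(s))^2 (Y_n(u) - Y_n(t))^2] \leq C \bigl(1 + M_n^2/V_n\bigr) (F(u) - F(s))^2,$$
which under (\ref{condi00}) yields Billingsley's criterion with $H$ proportional to $F$ and exponent $2$ on $F(u)-F(s)$. Moreover the individual jumps of $Y_n$ are uniformly at most $M_n/\sqrt{V_n} \to 0$, so the auxiliary small-jumps condition for $D[0,1]$ tightness is automatic as well.

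The main obstacle is the diagonal ($\el_1 = \el_2 = \el_3 = \el_4$) contribution to the moment bound: independence cannot decouple the fourth moment when all four indices coincide, and the only tool available is the crude bound $\sum_\el N_n^4(\el) \leq M_n^2 V_n$. Hypothesis (\ref{condi00}) is tailored exactly to neutralise this term, which explains why $M_n^2/V_n \to 0$ is the natural assumption for this FCLT.
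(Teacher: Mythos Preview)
Your proposal is correct and follows essentially the same route as the paper: Cram\'er--Wold plus Lindeberg for the finite-dimensional distributions, and Billingsley's moment criterion $\E[(Y_n(t)-Y_n(s))^2(Y_n(u)-Y_n(t))^2] \leq C(F(u)-F(s))^2$ for tightness.

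There is, however, one noteworthy difference in the tightness step. You bound the diagonal contribution via $\sum_\el N_n^4(\el) \leq M_n^2 \sum_\el N_n^2(\el) = M_n^2 V_n$, which produces the factor $M_n^2/V_n$ and leads you to invoke (\ref{condi00}) a second time. The paper instead uses the elementary inequality $\|\cdot\|_{\ell_4} \leq \|\cdot\|_{\ell_2}$, i.e.\ $\sum_\el N_n^4(\el) \leq \bigl(\sum_\el N_n^2(\el)\bigr)^2 = V_n^2$, so that $V_n^{-2}\sum_\el N_n^4(\el) \leq 1$ outright. Consequently the paper's fourth-moment bound $\leq C(F(u)-F(s))^2$ holds \emph{uniformly in $n$ without any hypothesis on $(z_k)$}, and (\ref{condi00}) is used solely for the Lindeberg condition in the finite-dimensional convergence. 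Your concluding remark that (\ref{condi00}) is ``tailored exactly to neutralise'' the diagonal term is therefore a slight mis-diagnosis: the diagonal is harmless for free, and the hypothesis is needed only for Lindeberg.

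A minor omission: the paper treats the case of a general (possibly discontinuous) distribution $F$ by first proving the result for continuous $F$ and then passing to general $F$ via the quantile transform $F^{-1}(\zeta)$ with $\zeta$ uniform on $[0,1]$; you should mention this reduction as well.
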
 \proof \ The result follows from the Cram\'er-Wold device, if we prove convergence of the finite dimensional distributions and tightness. 
The variance is
\begin{eqnarray}
&&\E  [Y_n(s)]^2 = \frac1{{V_n}} \, \sum_{\el} \, N_n^2(\ell) \, \E [{\bf 1}_{X_\el \leq s} - F(s)]^2 = \sigma^2(s) = F(s) (1 - F(s)).
\end{eqnarray}
1) {\it Finite dimensional distributions.} The convergence follows from Lindeberg's theorem for triangular arrays of independent random variables 
as in \cite[thm 7.2]{Bil68}. 
The Lindeberg's condition for the triangular array of independent r.v.'s
$\displaystyle \big(\frac{N_n(\ell)[ {\bf 1}_{X_\el \leq s}- F(s)]}{\sqrt{V_n}}\big)_{\el, n}$ 
follows from
\begin{eqnarray*}
&&\frac1{V_n}\sum_\el \int _{\{N_n(\el)| {\bf 1}_{X_\el \leq s} -  F(s)|
\geq \, \varepsilon \sqrt{V_n}\}} N_n^2(\el) \, |{\bf 1}_{X_\el \leq s}- F(s)|^2d\PP \\
&\leq& \frac1{V_n}\sum_\el  N_n^2(\el) \int _{\{\sup_\el N_n(\el)| {\bf 1}_{X_\0 \leq s}- F(s)| 
\geq \varepsilon \sqrt{V_n}\}} \, |{\bf 1}_{X_\0 \leq s}- F(s)|^2 d\PP \to 0,
\end{eqnarray*}
for every $\varepsilon > 0$, since $V_n=\sum_\el N_n^2(\el)$ and $\displaystyle {\sup_\el N_n(\el) \over \sqrt{V_n}} \to 0$, by assumption.

For the correlation of the process taken at $s_1$ and $s_2$, it holds by independence:
\begin{eqnarray*}
\E[Y_n(s_1)Y_n(s_2)]&=&\frac{1}{V_n}\sum_{\el_1,\el_2}N_n(\el_1)N_n(\el_2)\E[(  {\bf 1}_{X_{\el_1} \leq s_1} - F(s_1))({\bf 1}_{X_{\el_2} \leq s_2} -F(s_2) )]\\
&=&\frac{1}{V_n}\sum_\ell N^2_n(\el)( F(s_1\wedge s_2) - F(s_1) F(s_2))=F(s_1\wedge s_2)-F(s_1)F(s_2).
\end{eqnarray*}
This proves the convergence in distribution: $Y_n(s) \to W_F^0(s)$ for every $s$.

Let us show now the convergence of the finite dimensional distributions. Starting with the asymptotic distribution of $aY_n(s_1)+bY_n(s_2)$, 
by the above computation, we have
\begin{eqnarray}
&&\E[(aY_n(s_1)+bY_n(s_2))^2]= \nonumber\\
&&a^2F(s_1)(1-F(s_1))+b^2 F(s_2)(1-F(s_2))+2ab( F(s_1\wedge s_2) - F(s_1)F(s_2)). \label{asympVar0}
\end{eqnarray}
As above, it is easily seen that Lindeberg's condition is satisfied for the triangular array defined by $aY_n(s_1) + bY_n(s_2)$.
It means that the asymptotic distribution of $aY_n(s_1) + bY_n(s_2)$ is centered Gaussian with variance as computed above.

Note that $\E[(aW^0(s_1) + bW^0(s_2))^2]$ is also given by (\ref{asympVar0}) above.

Similarly, for every $s_1 \leq \cdots \leq s_r$, it holds
$$(Y_n(s_1),\cdots, Y_n(s_r))  \to_{dist} (W_F^0(s_1),\ldots, W_F^0(s_r)).$$

{\it Tightness.}  First we suppose $F$ continuous. Following the method of \cite{Bil68}, it is enough to show that for $s\leq t \leq r$, uniformly in $n$,
\begin{eqnarray*}
\E[(Y_n(t)-Y_n(s))^2 (Y_n(r)-Y_n(t))^2]\le C(F(r)-F(s))^2.
\end{eqnarray*} 
Putting $F(u,v) := F(v)-F(u)$, $f(\el, u, v) := {\bf 1}_{u< X_\el \leq v} - F(u,v)$, for $u < v$, we compute
\begin{eqnarray*}
\E[(Y_n(t)-Y_n(s))^2 (Y_n(r)-Y_n(t))^2] = 
\frac1{V^2_n} \E\big[\big(\sum_{\el } N_n(\el) f(\el,s,t) \big)^2 \big(\sum_{\el} \, N_n(\el) f(\el,t,r) \big)^2 \big].
\end{eqnarray*}
By expansion and independence, the above expression is sum of three types of terms:
$$\frac1{V^2_n}\sum_\el N_n^4(\el) \, [A], \, \frac1{V^2_n}\sum_{\el_1, \el_2}N_n^2(\el_1) N_n^2(\el_2) \, [B], 
\, \frac1{V^2_n}\sum_{\el_1\not=\el_2}N_n^2(\el_1) N_n^2(\el_2)  \, [C],$$
\begin{flalign*}
\text{ with } &A=\E[f^2(\el, s,t) f^2(\el,t,r)], B=\E[f^2(\el_1, s,t)] \, \E[f^2(\el_2,t,r)],&\\ 
&C=\E[ f(\el_1, s,t) f(\el_1, t,r))] \, \E[ f(\el_2, s,t) f(\el_2, t,r))].&
\end{flalign*}
By stationarity and since the intervals do not overlap, we have
\begin{eqnarray*}
&A&=F(s,t)F^2(t,r)+F^2(s,t)F(t,r)-3F^2(s,t)F^2(t,r), \\
&B&=F(s,t)(1-F(s,t))\cdot F(t,r)(1-F(t,r)), \, C=F^2(s,t)F^2(t,r).
\end{eqnarray*}
Since $0\le F(s,t), F(t,r), F(s,r)\le 1$ and $F(s,t), F(t,r) \le F(s,r)$, it follows
\begin{eqnarray*}
A\le 2F^3(s,r)\le 2F^2(s,r),\, B\le F^2(s,r),\ C\le F^4(s,r) \leq F^2(s,r).
\end{eqnarray*}
Recall that $V_n= \sum_\el N_n^2(\el)$. Using $\|\cdot\|_{\ell_4} \leq \|\cdot \|_{\ell_2}$ for the bound of the first term, we have for some fixed constant $C>0$:
\begin{eqnarray*}
\E[(Y_n(t)-Y_n(s))^2 (Y_n(r)-Y_n(t))^2] \leq C(F(r)-F(s))^2,\ \forall n.
\end{eqnarray*}
Hence by \cite[Theorem 15.6]{Bil68}, for non decreasing continuous $F$, the sequence of processes $(Y_n(s))$ is tight in $D(0,1)$.
This proves that, if $F$ is continuous, then $Y_n\to_{D(0,1)} (W^0\circ F)$.

Now, for a general $F$ a classical method is to use a generalized inverse. Let us describe it briefly. We consider first the uniform empirical process.
Let $(\zeta_k)$ be uniformly distributed i.i.d. r.v.'s. Denote the empirical process along $(z_k)$ with respect to $(\zeta_k)$ by $U_n(s)$.
By applying what we have just proved for a continuous distribution, $U_n(s)\to_{D(0,1)}W^0(s)$.

Now let $F^{-1}(t):=\inf \{s: t \le F(s) \}$. We get $\PP(F^{-1}(\zeta_0)\le s)= \PP(X_0\leq s)=F(s)$,
so $Y_n(s)=_{dist.}U_n(F(s))$. We may then proceed as in Billingsley (\cite[Theorem 5.1]{Bil68}) to deduce the FCLT for $Y_n(s)$ with $W^0(F(s))$ as limit. \eop

\section{\bf Local times for ergodic sums} \label{genCocy}

In the previous section about limit theorems for the empirical process sampled along $(z_k)$, 
we have found sufficient conditions on the quantities $V_n$ and $M_n$ associated to $(z_k)$.
When $(z_k)$ is given by a ``cocycle'', $z_k = S_kf(x)$, one can ask if these conditions are satisfied.

We start with some general facts and construct counterexamples for which condition (\ref{condi00}) is not satisfied.
In the next section, we will discuss two very different examples of cocycles:  first the case of random walks, then ``stationary random walks'' generated by a rotation. 

\subsection{Auxiliary general results} \label{genCocy1}

\

First we introduce some notation and make general remarks.
\begin{nota}\label{notaVntf}{\rm Let $(X, {\cal B}, \mu)$  be a probability space and $T$ a measure preserving transformation acting on $X$
such that the dynamical system $(X, {\cal B}, \mu, T)$ is ergodic.

Let $f$ be a measurable function on $X$ with values in $\Z^d$, $d \geq 1$. Its ergodic sums generated by the iteration of $T$, denoted by $f_k$ (or $S_k f$), are
$$f_k(x) := \sum_{j=0}^{k-1} f(T^j x), k \geq 1, \ f_0(x) = 0.$$ 
The sequence $(f_k(x), k \geq 1)$ can be viewed as a ``stationary random walk'' defined on $(X, {\cal B}, \mu)$.
It will be called a ``cocycle'' and denoted by $(\mu,T, f)$ or simply $(T, f)$. 

For $x \in X$, we put (cf. (\ref{notaVn})) $N_0(x, \el) = 0$ and, for $n \geq 1$,}
\begin{eqnarray*}
N_n(T, f, x, \el) &:=&\#\{1 \leq k \leq n:\ f_k(x)=\el\}, \, \el \in \Z^d, \\
M_n(T, f, x) &:=& \max_{\el \in \Z^d} N_n(T, f, x, \el),\\
V_n(T, f, x) &:=& \#\{1 \leq j,k \leq n :\, f_j(x) = f_k(x)\} = \sum_{\el \in \Z^d} \, N_n^2(x, \el).
\end{eqnarray*}
\end{nota}
Most of the time, we will omit $T$ and $f$ in the notation and write simply $N_n(x, \el)$, $M_n(x)$, $V_n(x)$. We have
$\sum_\el N_n(x, \el) = n \text{ and } n \leq V_n(x) \leq n \, M_n(x)$.

{\it A question is to know if the following conditions hold for a.e. $x$:}
\begin{eqnarray}
&&V_n(x) = o(n^2 \, (\log\log n)^{-1}) \text{ or } V_n(x) \leq C_1 {n^2 \over (\log n)^\beta}, \text{ with } \beta > 1,\label{GC} \\
&&\lim_n {M^2_n(x) \over V_n(x)} = 0.\label{condi000}
\end{eqnarray}
For a random walk this is related to a question studied in \cite{ET60} and later in \cite{DPRZ01}: 
How many times does the walk revisit the most frequently visited site in the first $n$ steps?

{\it Cylinder map.} We denote by $\tilde T_f$ the map (sometimes called cylinder map) $(x, \el) \to (Tx, \el + f(x))$ acting on $X \times \Z^d$, 
endowed with the infinite invariant measure $\tilde \mu$ defined as the product of $\mu$ by the counting measure on $\Z^d$.

For $\varphi: X \times \Z^d \to \R$ the ergodic sums for the cylinder map are
$$\tilde S_n \varphi(x, \el) := \sum_{k=0}^{n - 1} \varphi(\tilde T_f^k (x, \el)) = \sum_{k=0}^{n - 1} \varphi(T^k x, \el+ f_k(x)).$$
With $\varphi_0 := {\bf 1}_{X \times \{\0\}}$, it holds
$$\tilde S_n \varphi_0(x, -\el)  = \sum_{k=0}^{n-1} {\bf 1}_{X \times \{\0\}} (T^k x, - \el+ f_k(x)) = \#\{0\le k \leq n-1: \, f_k(x)=\el\}.$$ 
Therefore,  $\tilde S_n \varphi_0 (x, -\el)  = N_n(\el)(x)$.

\vskip 3mm
{\it Recurrence/transience.} It can be shown that a cocycle $(\mu,T, f)$ (over an ergodic dynamical system) is either {\it recurrent} or {\it transient}.
For $f$ with values in $\Z^d$, it means that either $S_k f(x) = 0$ infinitely often for a.e. $x$,
or $S_k f(x) = 0$ finitely often  for a.e. $x$. In the latter case, we have $\lim_k |S_k f(x)| = +\infty$, $\mu$-a.e.

Let ${\Cal R}_n(x) = \{\el \in \Z^d: \ f_k(x) = \el \text{ for some } k \leq n\}$  be the ``range'' of the cocycle, i.e., the set of points visited by $f_k(x)$ up to time $n$. 

In \cite{DGK18} the following result is shown (for the general case of a cocycle with values in a countable group): let $G$ be a countable group and $f : X \to G$. 
If the cocycle $(T, f)$ is recurrent, then $\Card({\Cal R}_n(x)) = o(n)$ for a.e. $x$. If it is transient, there exists $c > 0$ such that $\Card({\Cal R}_n(x)) \sim c \, n$ for a.e. $x$.

Using the lemma below, this implies for a.e. $x$:
\begin{eqnarray}
&&\liminf_n {V_n(x) \over n} > 0 \text{ in the transient case}, \ {V_n(x) \over n} \to + \infty \text{ in the recurrent case}. \label{dico0}
\end{eqnarray}
To show (\ref{dico0}) we use the following inequality valid for a general sequence $(z_k)$:
\begin{lem} If $\Cal A$ is a non empty subset in $\Z^d$, we have:
\begin{eqnarray}
V_n\geq {\bigl(\sum_{k=0}^{n-1} 1_{z_k \,  \in \Cal A}\bigr)^2  \over  \Card(\Cal A)}. \label{majAn}
\end{eqnarray}
\end{lem}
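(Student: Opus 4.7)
The plan is to reduce the inequality to a one-line application of the Cauchy--Schwarz inequality, using only the definitions of $V_n$ and of the local times $N_n(\el)$.

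First I would rewrite the numerator in terms of the local times. Since $N_n(\el) = \#\{0 \le k \le n-1 : z_k = \el\}$, partitioning the index set $\{0,\dots,n-1\}$ according to the value of $z_k$ yields
$$\sum_{k=0}^{n-1} 1_{z_k \in \Cal A} = \sum_{\el \in \Cal A} N_n(\el).$$

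Next I would apply the Cauchy--Schwarz inequality to the sum $\sum_{\el \in \Cal A} N_n(\el) = \sum_{\el \in \Cal A} 1 \cdot N_n(\el)$, which gives
$$\Bigl(\sum_{\el \in \Cal A} N_n(\el)\Bigr)^2 \le \Card(\Cal A) \cdot \sum_{\el \in \Cal A} N_n^2(\el).$$
Finally, since $N_n^2(\el) \ge 0$ for every $\el$, extending the sum on the right-hand side from $\Cal A$ to all of $\Z^d$ only increases it, and by the identity $V_n = \sum_{\el \in \Z^d} N_n^2(\el)$ recalled just above (\ref{notaVn}), we obtain $\sum_{\el \in \Cal A} N_n^2(\el) \le V_n$. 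Combining the two bounds gives the claim.

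There is no real obstacle here: the statement is essentially a Cauchy--Schwarz estimate that converts an $\ell^1$-norm of the local time restricted to $\Cal A$ into an $\ell^2$-norm controlled by $V_n$. The only thing to be careful about is that $\Cal A$ is assumed non-empty so that the division by $\Card(\Cal A)$ makes sense, and that the inequality is valid for every (not necessarily finite) non-empty $\Cal A$ since the Cauchy--Schwarz step is trivial whenever $\Card(\Cal A) = +\infty$.
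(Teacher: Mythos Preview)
Your proof is correct and follows essentially the same route as the paper: rewrite the count as $\sum_{\el\in\Cal A} N_n(\el)$, apply Cauchy--Schwarz, and bound $\sum_{\el\in\Cal A} N_n^2(\el)$ by $V_n$. Your added remark on the trivial case $\Card(\Cal A)=+\infty$ is a nice touch not made explicit in the paper.
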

\proof \ Cauchy-Schwarz inequality implies: 
\begin{eqnarray*}
&&\sum_{k=0}^{n-1} 1_{z_k \,  \in \Cal A} = \sum_{\el \in \Cal A} \sum_{k=0}^{n-1} 1_{z_k \,  = \el} 
\leq (\sum_{\el \in A} (\sum_{k=0}^{n-1} 1_{z_k \,  = \el})^2)^\frac12 \, (\Card(\Cal A))^\frac12 \leq V_n^\frac12 \,  \, (\Card(\Cal A))^\frac12. \ \eop
\end{eqnarray*}
If $z_k = S_k f(x)$, this show (\ref{dico0}). Indeed by taking $\Cal A = \Cal R_n(x)$ we get
\begin{eqnarray}
V_n(x) \geq {n^2 \over \Card(\Cal R_n(x))}. \label{growth0}
\end{eqnarray}

\vskip 3mm
\begin{lem} The following formulas hold for quantities defined in \ref{notaVntf}.
\begin{eqnarray}
V_{n}(x) &=& n + 2 \sum_{k = 1}^{n-1} \, \sum_{j = 0}^{n-k-1} (1_{f_k (T^j x) = \0}), \label{expVnx0} \\
&=& 2[N_{n-1}(Tx, 0) + N_{n-2}(T^2 x, 0) + ... + N_1(T^{n-1} x, 0) ] + n, \ n \geq 2, \label{expVnx} \\
M_n(x) &=& \max[N_{n}(x, 0), 1 + \max_{1 \leq k \leq n-1} N_{n-k}(T^k x, 0)] \leq 1 + \max_{0 \leq k \leq n-1} N_{n}(T^k x, 0), \label{maxform} \\
&=& M_{n-1}(Tx) + 1_{\el(n-1, Tx) = \0} \leq M_{n-1}(Tx) + 1. \label{Mn1}
\end{eqnarray}
\end{lem}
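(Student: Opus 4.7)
All four identities are combinatorial consequences of the cocycle relation $f_{k}(T^{j}x)=f_{j+k}(x)-f_{j}(x)$, valid for all $j,k\ge 0$, and I would treat them in the order given, each following from a short counting argument. For (\ref{expVnx0}), I would use the double-counting representation $V_n(x)=\sum_{\el\in\Z^d}N_n^2(x,\el)=\#\{(j,k):\, f_j(x)=f_k(x)\}$ over the appropriate square of indices, split off the diagonal ($j=k$, giving $n$) from the symmetric off-diagonal (twice the pairs with $j<k$), and apply the cocycle identity to rewrite $f_j(x)=f_k(x)$ with $j<k$ as $f_{k-j}(T^j x)=\0$. Setting $m=k-j$ and enumerating the admissible range of $j$ yields the expression $n+2\sum_{m}\sum_{j}1_{f_m(T^j x)=\0}$, which is (\ref{expVnx0}).

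Next, (\ref{expVnx}) is obtained from (\ref{expVnx0}) by swapping the order of summation. Fixing the shift $j$ and letting the step count $m$ vary, the inner sum $\sum_{m}1_{f_m(T^{j}x)=\0}$ counts the returns of the shifted cocycle $(T,f)$ started at $T^{j}x$ to the origin over the first $n-j$ steps, i.e.\ $N_{n-j}(T^{j}x,\0)$ (keeping careful track of whether the trivial value $f_0=\0$ is included, to match the statement). Summing over $j=1,\ldots,n-1$ gives the identity.

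For (\ref{maxform}), I would partition $M_n(x)=\max_{\el}N_n(x,\el)$ into the contributions of $\el=\0$ and of $\el\neq\0$. For any visited $\el\neq\0$, let $k_0(\el)\in\{1,\ldots,n-1\}$ be its first hitting time by $(f_k(x))$. Then for $k\ge k_0(\el)$, $f_k(x)=\el$ is equivalent (by the cocycle relation) to $f_{k-k_0}(T^{k_0}x)=\0$, hence $N_n(x,\el)=1+N_{n-k_0(\el)}(T^{k_0(\el)}x,\0)$. Taking the max over these first hitting times yields the equality in (\ref{maxform}); the final upper bound is then immediate from the monotonicity $N_{n-k}(T^{k}x,\0)\le N_n(T^{k}x,\0)$ and the enlargement of the index set from $\{1,\ldots,n-1\}$ to $\{0,\ldots,n-1\}$.

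Finally, (\ref{Mn1}) comes from a one-step shift identity: the cocycle relation $f_k(Tx)=f_{k+1}(x)-f(x)$ gives, after a change of index, $N_n(x,\el)=N_{n-1}(Tx,\el-f(x))+1_{\el=f(x)}$. Taking the max in $\el$ yields $M_n(x)=\max\bigl(M_{n-1}(Tx),\,N_{n-1}(Tx,\0)+1\bigr)$, and a short case analysis according to whether $\0$ is an argmax of $N_{n-1}(Tx,\cdot)$ shows that this equals $M_{n-1}(Tx)+1_{\el(n-1,Tx)=\0}$ (interpreting $\el(n-1,Tx)$ as such an argmax); the upper bound $M_{n-1}(Tx)+1$ is then trivial. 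The main obstacle here is nothing mathematical---all four identities reduce to elementary counting once the cocycle relation is in hand---but rather the careful bookkeeping of the index conventions (whether $N_n$ counts $k$ from $0$ or from $1$, and whether the trivial value $f_0=\0$ is included in each sum); getting the off-by-one right for each formula is what makes the proof slightly finicky.
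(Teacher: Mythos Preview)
Your plan is correct and the arguments for (\ref{maxform}) and (\ref{Mn1}) coincide with the paper's: first-hitting-time reduction via the cocycle relation for the former, and the one-step identity $N_n(x,\el)=N_{n-1}(Tx,\el-f(x))+1_{f(x)=\el}$ followed by a case split on whether $\0$ is an argmax for the latter.

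The one genuine difference is in how you handle $V_n$. You count ordered pairs $(j,k)$ with $f_j(x)=f_k(x)$ directly, split off the diagonal, and use the cocycle relation on the off-diagonal; then (\ref{expVnx}) is a Fubini swap. The paper instead first squares the identity $N_n(x,\el)=N_{n-1}(Tx,\el-f(x))+1_{f(x)=\el}$ to obtain the one-step recursion
\[
V_n(x)=V_{n-1}(Tx)+2N_{n-1}(Tx,\0)+1,
\]
and then iterates it to get (\ref{expVnx}) (and hence (\ref{expVnx0})). Your direct count is slightly quicker for the formulas at hand; the paper's route has the advantage that the recursion itself (labelled (\ref{vnx})) is reused verbatim in the proof of the next lemma on the $T$-invariance of $\limsup M_n^2/V_n$. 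Your closing caveat about index conventions is well placed: the two formulas (\ref{expVnx0}) and (\ref{expVnx}) correspond to the $\{0,\dots,n-1\}$ and $\{1,\dots,n\}$ indexings respectively, so the off-by-one bookkeeping really is the only delicate point.
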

\proof \ 
a) From $f_k(x) = f(x) + f_{k-1}(Tx), \, k \geq 1$, it follows 
\begin{eqnarray}
N_n(x, \el) = N_{n-1}(Tx, \el - f(x)) + 1_{f(x) = \el}, \, n \geq 1. \label{Nn1}
\end{eqnarray}
Therefore we have:
\begin{eqnarray*}
&&\sum_{\el \in \Z^d} N_n^2(x, \el) = \sum_{\el \in \Z^d} [N_{n-1}(Tx, \el - f(x)) + 1_{f(x) = \el}]^2 
=  \sum_{\el \in \Z^d} [N_{n-1}(Tx, \el) + 1_{\el = \0}]^2 \\
&&= \sum_{\el \not = 0} [N_{n-1}(Tx, \el)]^2 + [N_{n-1}(Tx, \0) + 1]^2 = \sum_{\el} [N_{n-1}(Tx, \el)]^2 +  2N_{n-1}(Tx, \0) + 1.
\end{eqnarray*}
Hence the relation
\begin{eqnarray}
&&V_n(x) =V_{n-1}(Tx) + 2N_{n-1}(Tx, \0) + 1. \label{vnx}
\end{eqnarray}
We have $V_1(x) = 1$ and by the previous relation we get by induction (\ref{expVnx0}) and (\ref{expVnx}).

b) For $x \in X$, let $\el(n, x)$ (a most visited site by $S_k(x)$ up to time $n$) be defined by
\begin{eqnarray*}
\el(n, x) &:=& \0, \text{ if } N_n(x, \0) \geq N_n(x, \el), \text{ for all } \el \not = 0, \\
\text{ else } &:=& \el_1, \text{ if } \ell_1 \text{ is such that } M_n(x) = N_n(x, \el_1) > N_n(x, \0).
\end{eqnarray*}
Let $p_n(x) \in [1, n]$ be the first visit of $S_k(x)$ to $\el(n, x)$ for $k = 1, ..., n$. By definition $M_n(x) = N_n(x, \el(n, x))$. 

We have $M_n(x) = N_{n}(x, 0)$ if $\el(n, x) = 0$, else
$M_n(x)=N_{n - p_n(x)}(T^{p_n(x)} x, 0) + 1$, by the cocycle relation $S_{p_n(x) + k}(x) - S_{p_n(x)}(x)  = S_k(T^{p_n(x)} x)$. This implies: 
$$M_n(x) \leq \max [N_{n}(x, 0), N_{n - p_n(x)}(T^{p_n(x)} x, 0) +1] \leq \max [N_{n}(x, 0), \max_{1 \leq k \leq n} N_{n - k}(T^k x, 0) +1].$$

It follows (noticing that $N_0(x, 0) =0$):
\begin{eqnarray}
M_n(x) \leq 1 + \max_{0 \leq k \leq n-1} N_{n-k}(T^k x, 0) \leq 1 + \max_{0 \leq k \leq n-1} N_{n}(T^k x, 0) . \label{maxform0}
\end{eqnarray}
This shows (\ref{maxform}).

c) Observe also the following relation: by (\ref{Nn1}) we have:
\begin{eqnarray*}
M_n(x) &=& \sup_\el [N_{n-1}(Tx, \el - f(x)) + 1_{\el - f(x) = \0}] = \sup_\el [N_{n-1}(Tx, \el) + 1_{\el= \0}] \\
&=& \max \, [\sup_{\el \not = \0} N_{n-1}(Tx, \el), \, N_{n-1}(Tx, \0) + 1].
\end{eqnarray*}
If $\el(n-1, Tx) = \0$, then $N_{n-1}(Tx, \0) \geq \sup_{\el \not = \0} N_{n-1}(Tx, \el)$.
If $\el(n-1, Tx) \not = \0$, then $N_{n-1}(Tx, \0) < \sup_{\el \not = \0} N_{n-1}(Tx, \el)$. 
This shows (\ref{Mn1}).
\begin{rem} \label{unifx}
By (\ref{maxform}), if $K_n$ is a uniform bound over $x$ of $N_n(x, \0)$, then $M_n(x) \leq K_n$. 
Likewise, if $N_n(x, \0) \leq K_n$, for a.e. $x$, then $M_n(x) \leq K_n$, for a.e. $x$.
\end{rem}

Now we show that the set of $x \in X$ such that $\lim_n {M_n^2(x) \over V_n(x)} = 0$ has measure 0 or 1.
\begin{lem} \label{MnInv} It holds: $\displaystyle \lim_n \, [{M_n^2(x) \over V_n(x)} -  {M_{n}^2(Tx) \over V_{n}(Tx)}] = 0$.
\hfill \break If $T$ is ergodic, there is a constant $\gamma \in [0, 1]$ such that $\displaystyle \limsup_n {M_n^2(x) \over V_n(x)}= \gamma$ for a.e. $x$.
\end{lem}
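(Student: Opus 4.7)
The plan is to show that the two ratios differ by a quantity that is $O(V_n(x)^{-1/2})$, using the recursions (\ref{Mn1}) and (\ref{vnx}) that have already been established, together with the elementary bound $M_n^2(y)\le V_n(y)$ (which holds because $V_n(y) = \sum_\ell N_n^2(y,\ell) \ge \max_\ell N_n^2(y,\ell)$) and $V_n(y)\ge n\to\infty$.

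First I would compare $M_n^2(x)/V_n(x)$ with the ``shifted and retarded'' ratio $M_{n-1}^2(Tx)/V_{n-1}(Tx)$. By (\ref{Mn1}) we have $0\le M_n(x)-M_{n-1}(Tx)\le 1$, and by (\ref{vnx})
\[
0\le V_n(x)-V_{n-1}(Tx) \;=\; 2N_{n-1}(Tx,\0)+1 \;\le\; 2M_{n-1}(Tx)+1 \;\le\; 2\sqrt{V_{n-1}(Tx)}+1.
\]
Expanding
\[
\frac{M_n^2(x)}{V_n(x)}-\frac{M_{n-1}^2(Tx)}{V_{n-1}(Tx)}
= \frac{(M_n^2(x)-M_{n-1}^2(Tx))\,V_{n-1}(Tx) - M_{n-1}^2(Tx)\,(V_n(x)-V_{n-1}(Tx))}{V_n(x)\,V_{n-1}(Tx)},
\]
the bound $M_n^2(x)-M_{n-1}^2(Tx)\le 2M_{n-1}(Tx)+1 = O(\sqrt{V_{n-1}(Tx)})$ and the estimate for $V_n(x)-V_{n-1}(Tx)$ make each summand $O(V_n(x)^{-1/2})$, which tends to $0$ since $V_n(x)\ge n$.

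Next, applying the exact same argument at the single point $y=Tx$ (comparing consecutive indices without shifting by $T$), using $|M_n(y)-M_{n-1}(y)|\le 1$ and $V_n(y)-V_{n-1}(y)=2N_{n-1}(y,f_n(y))+1 = O(\sqrt{V_{n-1}(y)})$, gives
\[
\frac{M_n^2(Tx)}{V_n(Tx)}-\frac{M_{n-1}^2(Tx)}{V_{n-1}(Tx)} \longrightarrow 0.
\]
Subtracting the two convergences yields the first assertion. For the second, the pointwise identity $M_n^2(x)/V_n(x)-M_n^2(Tx)/V_n(Tx)\to 0$ implies that $\limsup_n M_n^2/V_n$ is a $T$-invariant function; by ergodicity of $T$ it is almost everywhere equal to a constant $\gamma$, and the bound $M_n^2\le V_n$ forces $\gamma\in[0,1]$.

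The only substantive step is the algebraic decomposition above, and the key observation that both numerator contributions are $O(\sqrt{V_n})$ so that dividing by $V_n\cdot V_{n-1}$ produces a vanishing error; once the recursions (\ref{Mn1}) and (\ref{vnx}) are in hand there is no real obstacle.
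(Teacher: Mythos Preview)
Your proof is correct and follows essentially the same route as the paper: compare $M_n^2(x)/V_n(x)$ with $M_{n-1}^2(Tx)/V_{n-1}(Tx)$ via the recursions (\ref{Mn1}) and (\ref{vnx}), then compare consecutive indices at the fixed point $Tx$, and subtract. Your use of the single inequality $M_n^2(y)\le V_n(y)$ (hence $M_n(y)/V_n(y)\le V_n(y)^{-1/2}$) is a slightly cleaner packaging than the paper's dichotomy ``$M_n\ge\sqrt n$ or $M_n<\sqrt n$'', but the two arguments are the same in substance.
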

\proof \ We use (\ref{vnx}) and (\ref{Mn1}). Putting $\varepsilon =  1_{\el(n-1, Tx) = \0}$, we have:
\begin{eqnarray*}
&&|{M_n^2(x) \over V_n(x)} - {M_{n-1}^2(Tx) \over V_{n-1}(Tx)}| = |{M_{n-1}^2(Tx) 
+ \varepsilon (2 M_{n-1}(Tx) + 1) \over V_{n-1}(Tx) +  2N_{n-1}(Tx, \0) + 1} -  {M_{n-1}^2(Tx) \over V_{n-1}(Tx)}| \nonumber\\
&&= |{\varepsilon (2 M_{n-1}(Tx) + 1) \over V_{n}(x)} - {(2N_{n-1}(Tx, \0) + 1) \over V_{n}(x)} \, {M_{n-1}^2(Tx) \over V_{n-1}(Tx)}|\nonumber \\
&& \leq {2 M_{n-1}(Tx) + 1 \over V_{n}(x)} + {2N_{n-1}(Tx, \0) + 1 \over V_{n}(x)} \leq {4M_{n-1}(Tx) \over V_{n}(x)} + {2 \over V_{n}(x)} 
\leq {4 \over \sqrt n} + {2 \over V_{n}(x)}.
\end{eqnarray*}
For the last inequality we use that either $M_n(x) \geq \sqrt n$, hence ${M_n(x) \over V_n(x)} \leq {1 \over M_n(x)} \leq {1 \over \sqrt n}$, or
$M_n(x) < \sqrt n$, hence ${M_n(x) \over V_n(x)}\leq {\sqrt n \over n} = {1 \over \sqrt n}$. Therefore, 
\begin{eqnarray}
&&|{M_n^2(x) \over V_n(x)} -  {M_{n-1}^2(Tx) \over V_{n-1}(Tx)}| \to 0. \label{MnMTn1}
\end{eqnarray}
Observe now that
\begin{eqnarray*}
M_n(x) = M_{n-1}(x) + \varepsilon_n, \text{ where } \varepsilon_n = 0 \text{ or } =1,
\end{eqnarray*}
and $\varepsilon_n =1$ if and only if there is $\el_n$ such that 
$$M_{n-1}(x) = \#\{1 \leq k \leq n - 1:\ f_k(x)=\el_n\} \text{ and } f_n(x)=\el_n.$$
We have
$$M_n^2(x) =  M_{n-1}^2(x) + c_n, \text{ with } c_n = \varepsilon_n (1 + 2  M_{n-1}(x))$$
and $N_n(x, \el) = N_{n-1}(x, \el) + \varepsilon'_n(\el)$, with $\varepsilon'_n(\el) = 1_{f_n(x)=\el}$ and $\sum_{\el \in \Z^d} \, \varepsilon'_n(\el) = 1$.

Therefore,
\begin{eqnarray*}
&&V_n(x) = \sum_{\el \in \Z^d} \, (N_{n-1}(x, \el) + \varepsilon'_n(\el))^2 = V_{n-1}(x)  + 2 \sum_{\el \in \Z^d} \, \varepsilon'_n(\el) N_n(x, \el)) + 1, \\
&&0 \leq V_{n}(x)  =  V_{n-1}(x)  + d_n, \text{ with } d_n \leq 2 M_n(x) + 1.
\end{eqnarray*}
\begin{eqnarray*}
&&|{M_n^2(x) \over V_n(x)} -  {M_{n-1}^2(x) \over V_{n-1}(x)}| = |{M_{n-1}^2(x) + c_n \over V_{n-1}(x) + d_n} -  {M_{n-1}^2(x) \over V_{n-1}(x)}| 
\leq {c_n \over V_{n}(x)} + {d_n \over V_n(x)} \, {M_{n-1}^2(x) \over V_{n-1}(x)}\\
&&\leq {\varepsilon_n (1 + 2  M_{n-1}(x) \over V_{n}(x)} + {2 M_n(x) + 1 \over V_n(x)} \, {M_{n-1}^2(x) \over V_{n-1}(x)}
\leq {2 \over V_{n}(x)} + {4 M_n(x) \over V_n(x)} \leq {2 \over V_{n}(x)} + {4 \over \sqrt n} \to 0. 
\end{eqnarray*}
From (\ref{MnMTn1}) and the convergence above, it follows $\lim_n \, [{M_n^2(x) \over V_n(x)} -  {M_{n}^2(Tx) \over V_{n}(Tx)}] = 0$.
By ergodicity of $T$, this shows the lemma \eop

\vskip 3mm
{\bf Case of a coboundary}

The case when $f$ is coboundary degenerates. Indeed, the following holds:
\begin{proposition} \label{cobThm} 
If $f$ is a coboundary we have:
\hfill \break a) $\liminf_n {M_n(x)  \over n} > 0$, for a.e. $x$;
\hfill \break b) there is  a constant $\beta > 0$ such that ${\frac1{n^2}} V_n(x) \to \beta$, for a.e. $x$;
\hfill \break c) for a.e. $x$, $\liminf_n {M_n^2(x) \over V_n(x)} > 0$.
\end{proposition}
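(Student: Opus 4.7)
The plan is to use the coboundary hypothesis to reduce the local times to Birkhoff averages of indicators of level sets of the transfer function. Write $f = g \circ T - g$ for some measurable $g : X \to \Z^d$, so that $f_k(x) = g(T^k x) - g(x)$ and
\[
N_n(x,\el) \;=\; \#\{1 \le k \le n : g(T^k x) = g(x) + \el\}.
\]
Set $p_m := \mu(g = m)$ for $m \in \Z^d$, so $\sum_m p_m = 1$. Applying Birkhoff's theorem to each of the countably many indicators $\mathbf{1}_{\{g = m\}}$ yields, for a.e.\ $x$ simultaneously in $m$, the pointwise convergence $n^{-1} N_n(x,\el) \to p_{g(x)+\el}$ for every $\el \in \Z^d$.

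For (a), pick any $m^*$ realizing $p_{m^*} = \max_m p_m > 0$ (which exists because the $p_m$ sum to $1$). Then $M_n(x) \ge N_n(x,\, m^* - g(x))$, and passing to the liminf gives $\liminf_n M_n(x)/n \ge p_{m^*} > 0$.

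For (b), I would write $V_n(x)/n^2 = \sum_{\el} (N_n(x,\el)/n)^2$ and pass the limit inside the sum. The main technical step (and the only subtle one) is the exchange with the infinite sum, which I would handle by the standard tail estimate: since each $N_n(x,\el)/n \in [0,1]$, for any finite $F \subset \Z^d$,
\[
\sum_{m \notin F} \bigl(\tfrac{N_n(x,\, m - g(x))}{n}\bigr)^2 \;\le\; \tfrac{1}{n}\#\{1 \le k \le n : g(T^k x) \notin F\},
\]
whose Birkhoff limit is $\mu(g \notin F) = \sum_{m \notin F} p_m$. Combined with the termwise convergence of the partial sum over $m \in F$ to $\sum_{m \in F} p_m^2$, letting $F$ exhaust $\Z^d$ shows $V_n(x)/n^2 \to \beta := \sum_m p_m^2$. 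Since $p_{m^*} > 0$, the constant $\beta$ is strictly positive.

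Finally, (c) is immediate from (a) and (b): writing $M_n^2(x)/V_n(x) = (M_n(x)/n)^2 \cdot (n^2/V_n(x))$ gives $\liminf_n M_n^2(x)/V_n(x) \ge p_{m^*}^2/\beta > 0$. The only real work is the tail exchange in (b); all other steps are bookkeeping from the coboundary representation.
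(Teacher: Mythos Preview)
Your proof is correct and follows essentially the same route as the paper: both reduce the local times to Birkhoff averages of the indicators $\mathbf{1}_{\{g=m\}}$ and handle the infinite sum in (b) by truncating to a finite set of level sets. One omission worth flagging: you assert without justification that the transfer function may be taken with values in $\Z^d$; the paper supplies this step (since each $e^{2\pi i g_j}$ is $T$-invariant and $T$ is ergodic, $g$ is $\Z^d$-valued up to an additive constant). Your tail estimate in (b), using $a^2 \le a$ for $a\in[0,1]$, is a slight simplification of the paper's bound $\sum_{\el\notin F}\bigl(\sum_k \mathbf{1}_{g(T^kx)=\el}\bigr)^2 \le \bigl(\sum_k \mathbf{1}_{g(T^kx)\notin F}\bigr)^2$, and your choice in (a) to use the globally maximal level set $m^*$ (rather than the paper's $N_n(x,\0)$, i.e., the level set through $x$) yields a cleaner, $x$-independent lower bound $p_{m^*}$; both are cosmetic differences.
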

\proof Suppose that $f$ is coboundary, $f = T\Phi - \Phi$. Since $f$ has values in $\Z^d$ and $T$ is ergodic, for all component $\Phi_j$ of $\Phi$,
$e^{2\pi i \Phi_j}$ is a constant. It follows that  $\Phi $ has also its values in $\Z^d$ up to an additive constant and we can assume that $\Phi $ has values in $\Z^d$.

a) We have $\liminf_n {M_n(x)  \over n} \geq \liminf_n \frac1n N_n(x, \0) > 0$, for a.e. $x$. The positivity results the following simple argument:

For $R \geq 1$, let $A_R$ denote the set $\cup_{\el: \|\el\| \leq R} (\Phi = \el)$. 
Since, for each $\el$, by Birkhoff's theorem, $\lim_n \frac1n  \sum_{0\le k \leq n-1} 1_{\Phi(T^k x) = \el} = \mu(\Phi = \el)$, it holds
$$\frac1n N_n(x, \0) \geq \sum_{\el \in A_R} \, 1_{(\Phi = \el)}(x) \, \frac1n \sum_{k=0}^{n-1} 1_{(\Phi=\el)}(T^k x) 
\to \sum_{\el \in A_R} \, 1_{(\Phi = \el)}(x) \, \mu(\Phi= \el).$$
Therefore, for every $R \geq 1$, $\liminf_n {M_n(x)  \over n} \geq \liminf_n {N_n(x, 0)  \over n}  \geq 
\sum_{\el \in A_R} \, 1_{(\Phi = \el)}(x) \, \mu(\Phi= \el)$, and the limit when $R \to \infty$ at right is $>0$, for a.e. $x$.

b) For $V_n$, we have:
\begin{eqnarray*}
V_n(f, x) &&= \sum_{\el \in \Z^d} \, N_n^2(x, \el) = \sum_{\el \in \Z^d} \, \#\{0 \le k \leq n-1: \ \Phi(T^k x) - \Phi(x) = \el\}^2 \\
&&= \sum_{\el \in \Z^d} \, \#\{0 \le k \leq n-1:\ \Phi(T^k x)=  \el\}^2  =  \sum_{\el \in \Z^d} \, \bigl(\sum_{0\le k \leq n-1} 1_{\Phi(T^k x) = \el}\bigr)^2,
\end{eqnarray*}
$$\text{hence: }{\frac1{n^2}}\sum_{\el \in A_R} \, \bigl(\sum_{0\le k \leq n-1} 1_{\Phi(T^k x) = \el}\bigr)^2 
= \sum_{\el \in A_R} \, \bigl({\frac1{n}} \sum_{0\le k \leq n-1} 1_{\Phi(T^k x) = \el}\bigr)^2\to \sum_{\el \in A_R} \, (\mu(\Phi=\el))^2.$$
This implies, for every $R \geq 1$,
$$\liminf_n {\frac1{n^2}} \sum_{\el \in \Z^d} \, \bigl(\sum_{0\le k \leq n-1} 1_{\Phi(T^k x) = \el}\bigr)^2 \geq
\lim_n {\frac1{n^2}}\sum_{\el \in A_R} \, \bigl(\sum_{0\le k \leq n-1} 1_{\Phi(T^k x) = \el}\bigr)^2 = \sum_{\el \in A_R} \, (\mu(\Phi=\el))^2.$$
It follows:
$\displaystyle \liminf_n {\frac1{n^2}} \sum_{\el \in \Z^d} \, \bigl(\sum_{0\le k \leq n-1} 1_{\Phi(T^k x) = \el}\bigr)^2 \geq \sum_{\el \in \Z^d} \, \mu(\Phi=\el)^2$.
For the complementary of $A_R$, it holds:
\begin{flalign*}
&\sum_{\el: \|\el\| > R}  \bigl(\sum_{0 \leq k < n} 1_{\Phi(T^k x) = \el}\bigr)^2 =   
\sum_{0 \leq j, k < n} \, \sum_{\el: \|\el\| > R} 1_{\Phi(T^j x) = \el} \, 1_{\Phi(T^k x) = \el}& \\
&\leq \sum_{0 \leq j, k < n} \, (\sum_{\el: \|\el\| > R} 1_{\Phi(T^j x) = \el}) \, (\sum_{\el: \|\el\| > R} 1_{\Phi(T^k x) = \el})
\leq \sum_{0 \leq j, k < n} 1_{A_R^c(T^j x)} 1_{A_R^c(T^k x)} = \bigl(\sum_{0 \leq k < n} 1_{A_R^c(T^k x})\bigr)^2.&
\end{flalign*}
It follows for the upper bound:
\begin{eqnarray*}
&&\limsup_n {\frac1{n^2}} \sum_{\el \in \Z^d} \, \bigl(\sum_{0\le k \leq n-1} 1_{\Phi(T^k x) = \el}\bigr)^2 \\
&&\leq \lim_n \sum_{\el \in A_R} \, \bigl( {\frac1n} \sum_{0\le k \leq n-1} 1_{\Phi(T^k x) = \el})^2
+ \lim_n \bigl({\frac1n}\sum_{0 \leq k < n} 1_{A_R^c(T^k x)} \bigr)^2 \\
&&= \sum_{\el \in A_R} \, (\mu(\Phi=\el))^2 + \mu(A_R^c)^2 \underset{R \to \infty} \to \sum_{\el \in \Z^d} \, \mu(\Phi=\el)^2.
\end{eqnarray*}
This shows b) with $\beta = \sum_{\el \in \Z^d} \, \mu(\Phi=\el)^2 > 0$.

c) Follows from a) and b). \eop

\begin{proposition} \label{Vno}There is a constant $\beta \geq 0$ such that, for a.e. $x$, $\lim_n {V_{n}(x) \over n^2} = \beta$. 
We have $\beta > 0$ if and only if the cocycle $(T, f)$ is a coboundary.
\end{proposition}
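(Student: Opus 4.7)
The plan is to identify $\beta$ as an explicit spectral quantity on $\T^d$, secure $L^1(\mu)$ convergence of $V_n/n^2$, and then upgrade to almost-sure convergence via a coboundary/non-coboundary dichotomy.

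For each $t\in\T^d$ introduce the unitary operator $U_t$ on $L^2(\mu)$ defined by $U_t\varphi(x):=e^{2\pi i\langle f(x),t\rangle}\varphi(Tx)$, so that $U_t^k 1(x)=e^{2\pi i\langle f_k(x),t\rangle}$. The kernel identity (\ref{Vnp}) and Parseval give
\begin{eqnarray*}
\frac{V_n(x)}{n^2}=\int_{\T^d} G_n(x,t)\,dt,\qquad G_n(x,t):=\Bigl|\frac1n\sum_{k=0}^{n-1}U_t^k 1(x)\Bigr|^2\le 1.
\end{eqnarray*}
Letting $P_t$ denote the orthogonal projection of $L^2(\mu)$ onto $\ker(U_t-I)$, the von Neumann mean ergodic theorem gives $G_n(\cdot,t)\to|P_t 1|^2$ in $L^1(\mu)$ for each $t$, and Fubini plus dominated convergence (using $G_n\le 1$) yield $V_n/n^2\to F_\infty(x):=\int_{\T^d}|P_t 1(x)|^2\,dt$ in $L^1(\mu)$. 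The invariance relation $U_t(P_t 1)=P_t 1$ forces $|(P_t 1)\circ T|=|P_t 1|$, so $F_\infty$ is $T$-invariant and, by ergodicity, equals the a.e.\ constant $\beta:=\int_{\T^d}\|P_t 1\|_{L^2(\mu)}^2\,dt\ge 0$.

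The equivalence $\beta>0\iff f$ coboundary follows from the spectral picture. If $f=T\Phi-\Phi$, then $\varphi_t(x):=e^{-2\pi i\langle\Phi(x),t\rangle}$ spans $\ker(U_t-I)$ and a direct computation recovers $\beta=\sum_\el\mu(\Phi=\el)^2>0$, consistent with \propref{cobThm}(b). Conversely, if $\beta>0$ the set $\{t:P_t 1\ne 0\}$ has positive Lebesgue measure. Any nonzero $U_t$-invariant $\varphi$ with $\int\varphi\,d\mu\ne 0$ must have $|\varphi|$ constant by ergodicity and therefore forces $\langle f,t\rangle$ to be an $\R/\Z$-coboundary, i.e.\ $t$ lies in the annihilator $E(f)^\perp$ of the subgroup of essential values of $f$. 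Since a subgroup of $\T^d$ of positive Lebesgue measure is all of $\T^d$, $E(f)=\{0\}$, and Schmidt's theorem for $\Z^d$-cocycles forces $f=T\Phi-\Phi$ for some $\Phi:X\to\Z^d$.

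The main obstacle is the upgrade from $L^1$ to almost-sure convergence in the non-coboundary case (the coboundary case being handled pointwise by \propref{cobThm}(b), the limit coinciding with $\beta$ by $L^1$ uniqueness). For non-coboundary $f$, $\beta=0$ and $V_n/n^2\to 0$ in $L^1$. The superadditivity $V_{n+m}(x)\ge V_n(x)+V_m(T^n x)$ (proved by restricting the index set $[0,n+m)^2$ to the two diagonal blocks and using the cocycle relation), together with Kingman's theorem, yields $V_n/n\to\gamma\in[1,\infty]$ a.e.\ with $\gamma$ an a.e.\ constant; this immediately gives $V_n/n^2\to 0$ a.e.\ when $\gamma<\infty$. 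The delicate subcase is a recurrent non-coboundary cocycle (e.g.\ the simple random walk on $\Z^1$ or $\Z^2$) where $\gamma=\infty$; here the plan is to invoke a Wiener--Wintner-type theorem ensuring that $\frac1n\sum_{k<n}e^{2\pi i\langle f_k(x),t\rangle}\to P_t 1(x)$ for a.e.\ $(x,t)$, after which bounded convergence in $t$ yields $V_n(x)/n^2\to 0$ a.e. This pointwise strengthening of von Neumann's theorem is the technical heart of the argument.
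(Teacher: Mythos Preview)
Your spectral route through the unitary operators $U_t$ and the identity $V_n(x)/n^2=\int_{\T^d}G_n(x,t)\,dt$ is genuinely different from the paper's argument and is attractive: it identifies $\beta=\int_{\T^d}\|P_t1\|_2^2\,dt$ intrinsically, gives the $L^1$ convergence cleanly, and handles the dichotomy $\beta>0\iff f$ coboundary via Moore--Schmidt on essential values. The paper instead works directly with the local-time expansion (\ref{expVnx0}) and, in the non-coboundary case, bounds $V_n/n^2$ by a Ces\`aro average of $N_n(T^jx,\0)/n$; it then applies the Hopf--Dunford--Schwartz ergodic theorem in the infinite-measure cylinder extension $\tilde T_f$ to obtain $N_n(\cdot,\0)/n\to 0$ a.e., and closes with a maximal-inequality argument to pass the a.e.\ convergence through the Ces\`aro average. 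That route is more self-contained (no Schmidt, no spectral projections) but also more ad hoc.

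There is, however, a genuine gap in your write-up: you explicitly leave the almost-sure convergence in the recurrent non-coboundary case as ``the plan is to invoke a Wiener--Wintner-type theorem'', and you call this ``the technical heart of the argument''. As stated this is not a proof. Moreover, the difficulty is overstated and the Kingman/superadditivity case split is unnecessary. For each \emph{fixed} $t\in\T^d$, apply Birkhoff's theorem to the measure-preserving skew product $(x,s)\mapsto(Tx,\,s+\langle f(x),t\rangle)$ on $X\times\T$ and the bounded observable $(x,s)\mapsto e^{2\pi i s}$: since $e^{2\pi i s}\cdot\frac1n\sum_{k<n}e^{2\pi i\langle f_k(x),t\rangle}$ is exactly the Birkhoff average, you get convergence of $\frac1n\sum_{k<n}U_t^k1(x)$ for $\mu$-a.e.\ $x$. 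Fubini then gives convergence of $G_n(x,t)$ for Lebesgue-a.e.\ $(x,t)$, and since $G_n\le 1$, dominated convergence in $t$ yields $V_n(x)/n^2\to\int_{\T^d}\limsup_nG_n(x,t)\,dt$ for $\mu$-a.e.\ $x$. Combined with your $L^1$ convergence to the constant $\beta$, this finishes all cases simultaneously. (A minor point: in your $\beta>0\Rightarrow$ coboundary step, the hypothesis $\int\varphi\,d\mu\ne 0$ is irrelevant; any nonzero $\varphi\in\ker(U_t-I)$ already has $|\varphi|$ constant by ergodicity.)
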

\proof \ The case of a coboundary follows from Proposition \ref{cobThm}. 

Suppose now that the cocycle is not a coboundary. From (\ref{expVnx0}), we can write
\begin{eqnarray*}
{V_{n}(x) \over n^2}&&= \frac1n + \frac2n \sum_{k = 1}^{n-1} \, \frac1n\sum_{j = 0}^{n-k-1} (1_{f_k (T^j x) = \0}) \\
&&\leq \frac1n + \frac2n \sum_{k = 1}^{n-1} \, \frac1n\sum_{j = 0}^{n-1} (1_{f_k (T^j x) = \0}) 
= \frac1n + \frac2n \sum_{j = 1}^{n-1} \, {N_n(T^j x, 0) \over n}.
\end{eqnarray*}
We will show that $\displaystyle \frac1n \sum_{j=0}^{n-1} \, {N_n(T^j x, 0) \over n}$ tend to 0 a.e.

By the ergodic theorem of Dunford and Schwarz (in the space of infinite measure $X \times \Z$) applied to $\tilde T_f$ and $\phi_0 = {\bf 1}_{X \times \{0\}}$, 
which is bounded and in $L^p(X \times \Z)$, for every $p \geq 1$, we get
a function $\tilde \phi_0(x)$ which is $\tilde T_f$-invariant and in $L^1(X \times \Z)$ and
$$\lim_n {N_n(x, 0) \over n} = \tilde \phi_0(x), \text{ a.s.}$$
As $f$ is not a coboundary, $\tilde \phi_0$ is zero a.e. (cf. for instance \cite{Co76}.)

Observe that $\|\sup_{n \geq L} {N_n(x, 0) \over n}\|_2 \to 0$, as $L$ goes to $+\infty$.
Indeed, for every $0 < \varepsilon \leq 1$, letting $A_{\varepsilon, L} :=\{x: \sup_{n \geq L} {N_n(x, 0) \over n} > \varepsilon\}$, 
we have $\mu(A_{\varepsilon, L}) \to 0$, when $L \to +\infty$. Since ${N_n(x, 0) \over n} \leq 1$, it follows, for $L$ big enough:
$$\int \bigl(\sup_{n \geq L} ({N_n(x, 0) \over n})\bigr)^2 \, d\mu \leq \varepsilon^2 + \mu(A_{\varepsilon, L}) \leq 2 \varepsilon.$$

We put $\displaystyle \Lambda_n(x) := \sup_{s \geq n} {N_s(x, 0) \over s}$. By the previous observation, we have $\lim_n \|\Lambda_n\|_2 = 0$.

Let us consider the following maximal function for the action of $T$:
\begin{eqnarray}
\tilde \Lambda_n(x) =  \sup_{1 \leq r < \infty} \frac1r \sum_{j=0}^{r-1} \Lambda_n(T^j x) 
=\sup_{1 \leq r < \infty} \frac1r \sum_{j=0}^{r-1} \sup_{s \geq n} {N_s(T^j x, 0) \over s}. \label{lambd}
\end{eqnarray}
From a classical maximal inequality, we have $\|\tilde \Lambda_n\|_2 \leq 2 \|\Lambda_n\|_2 \to 0$.

Observe also that, from the definition of $\tilde \Lambda_n$ in (\ref{lambd}), the following inequalities hold:
$$\tilde \Lambda_n(x) \geq \sup_{r, s \geq n} \frac1r \sum_{j=0}^{r-1} \, {N_s(T^j x, 0) \over s}
\geq \frac1n \sum_{j=0}^{n-1} \, {N_n(T^j x, 0) \over n}.$$
The sequence $\displaystyle \sup_{r, s \geq n} \frac1r \sum_{j=0}^{r-1} \, {N_s(T^j x, 0) \over s}$ is non negative and decreasing. 
Since $\|\tilde \Lambda_n\|_2 \to 0$, the $L_2$-norm of its limit in $(X, \mu)$ is zero.
The result follows. \eop

\begin{rem} \label{lebSpec} { (see also section \ref{sectGC0} and \cite{LLPVW02})  

Let $(U_\el)_{\el \in \Z^d}$ be a r.f. of square integrable r.v.'s on a probability space $(\Omega, \mathcal F, \PP)$ 
stationary in the weak sense and such that $\sum_\el |\langle U_\el , U_{\0}\rangle| < +\infty$. By (\ref{norm2}) and Proposition \ref{Vno}, 
if $f$ is not a coboundary, it holds 
$$\frac1{n^2} \, \|\sum_{k=0}^{n-1} U_{f_k(x)}\|_2^2 \, \leq C \, {V_n(x) \over n^2} \to 0, \text{ for } \mu\text{-a.e. } x.$$

Another result of norm convergence whose proof is like the proof of Proposition \ref{LLN1} is the following. 
Suppose that the r.f. is stationary. Let $\varphi$ be an observable on the dynamical system $(\Omega, \PP, \theta)$ with a spectral measure $\nu_\varphi$. We have:
$$\int_\Omega |\sum_{j=0}^{n-1} \varphi \circ \theta^{z_j}|^2  \, d \PP = \int_{\T^1} |\sum_{j=0}^{n-1} e^{2 \pi i z_jt}|^2 \, d \nu_\varphi(t).$$

Assume that $\nu_\varphi$ is absolutely continuous with respect to the Lebesgue measure on the torus, and let $\rho \in L^1(d\t)$ such that $d\nu_\varphi(\t) = \rho(\t) d\t$.
For $\varepsilon > 0$ there is $M$ such that $\int_{\rho > M} \, \rho \, d\t  < \varepsilon$. We have
\begin{eqnarray*}
\frac1{n^2} \, \int_{\T^d} |\sum_{j=0}^{n-1} e^{2 \pi i \langle z_j, \t \rangle}|^2 \, d \nu_\varphi(\t) 
&\leq& \frac{M}{n^2} \, \int_{\T^d} |\sum_{j=0}^{n-1} e^{2 \pi i \langle z_j, \t \rangle}|^2 \, d\t+ \int_{\rho > M} \, \rho \, d\t 
\leq M {V_n \over n^2}+ \varepsilon.
\end{eqnarray*}
This shows that $\displaystyle {V_n \over n^2} \to 0$ implies 
$\displaystyle \frac1{n^2} \, \int_\Omega |\sum_{j=0}^{n-1} \varphi \circ \theta^{z_j}|^2 \, d\PP \to 0$.
This is satisfied by every $\varphi \in L^2(\PP)$, if the dynamical system has a Lebesgue spectrum.

In particular, taking $z_k = f_k(x)$, by Proposition \ref{Vno}, if $f$ is not a coboundary, it holds 
$$\frac1{n^2} \, \int_\Omega |\sum_{j=0}^{n-1} \varphi(\theta^{f_j(x)} \omega)|^2 \, d\PP(\omega) \to 0, \text{ for a.e. } x.$$
When the spectral density is square integrable, as we have seen in Proposition \ref{LLN1}, the pointwise convergence holds under quantitative hypothesis on the sequence $(z_k)$.
}\end{rem}

\subsection{Non centered cocycles} \label{noncent}

\

In an ergodic dynamical system $(X, \mu, T)$, if $f: X \to \R$ is an integrable function with $\mu(f) >0$, 
by the ergodic theorem for the ergodic sums $S_n^T f(x) = \sum_{k=0}^{n-1} f(T^k x)$, it holds for a.e. $x$: $\lim_n \frac 1n S_nf(x) > 0$ 
and therefore $\lim_n S_n^T f(x) = + \infty$.
If $f$ has values in $\Z$, as the process $S_n^Tf(x)$ visits finitely often each site, one can think there is a chance that the following condition is satisfied:
\begin{eqnarray}
\lim_n {M^2_n(T, f, x) \over V_n(T, f, x)} = 0.\label{condi00x}
\end{eqnarray}

A case where (\ref{condi00x}) is satisfied is the following: let $X$ be a topological compact space, $T: X \to X$ a continuous map, which is uniquely ergodic 
with $\mu$ as unique invariant measure. Let $f: X \to \Z$ be an integrable function such that $\mu(f) \not = 0$. 
Assume $f$ to be Riemann-integrable (i.e. such that, for every $\varepsilon > 0$, there are two continuous functions 
$\psi_0, \psi_1$ with $\psi_0 \leq f \leq \psi_1$ and $\mu(\psi_1 - \psi_0) \leq \varepsilon$).

Then, the ergodic means of $f$ converge uniformly, and this implies the existence of $N$ 
such that $\frac1n |S_n^T f(x)| \geq \frac12 |\mu(f)| > 0$ for $n \geq N$ and every $x$.
It follows that the number of visits of $S_n^T f(x)$ to 0 is $\leq N$, for every $x$. By remark \ref{unifx}, $M_n(x) \leq N$, for every $x$, and a fortiori 
(\ref{condi00x}) is satisfied.

{\it Nevertheless, we will see that $(\ref{condi00x})$ may fail in non uniform cases: there are dynamical systems and sets $B$ of positive measure 
such that, for $f = 1_B$, }
\begin{eqnarray}
\limsup_n {M^2_n(T, f, x) \over V_n(T, f, x)} =  1. \label{counterEx}
\end{eqnarray}

\vskip 3mm
\subsection{Counterexamples}

\

In this subsection, we construct a transient counterexample, and also a recurrent counterexample with a function $f$ of null integral such that (\ref{counterEx}) is satisfied.

To construct these counterexamples, we start by considering a general ergodic dynamical system $(X, \mu, T)$ 
and a measurable set $B \subset X$ of positive measure. Let $T_{B}$ be the induced map on $B$, 
$R(x) = R^B(x)= \inf\{k \geq 1: T^k x \in B\}$ the first return time of $x$ in $B$ 
and $R_n(x) = R_n^B(x) := \sum_{k=0}^{n-1} R(T_B^k x)$ the $n$-th return time of $x$ in $B$. 

We take $x \in B$. If $f$ is a function such that $f =0$ outside $B$, the position of the sums up to time $R_{n-1}(x)$ 
are the positions of the ergodic sums $S_n^{T_B} f$ for the induced map up to time $n$, that is:
$$\{f(x), f(x) + f(T_B x), ..., f(x) + f(T_B x) + ... + f(T_B^{n-1} x)\}.$$
For a site $\ell$, the number of visits up to time $R_{n-1}(x)$ of the ergodic sums for $T$ is
$$N_{R_{n-1}(x)}(x,\ell) = \sum_{k= 0}^{n-1} R^B(T_B^k x) \, 1_{S_k^{T_B} f(x) =\ell}$$
and therefore
\begin{eqnarray}
V_{R_{n-1}(x)}(T, x) = \sum_\ell [\sum_{k= 0}^{n-1} R^B(T_B^k x) \, 1_{S_k^{T_B} f(x) =\ell}]^2.\label{VRn}
\end{eqnarray}
{\it Case $f = 1_B$.} Clearly $\sum_{k=0}^{n-1} f(T_B^k x) = n$. For the map $T$, the ergodic sums of $f$ are incremented by 1 when and only 
when the iterates $T^j x$ visit the set B. Otherwise, they stay fixed. The times of visits in $B$, for $x \in B$, are $0, R(x), R(x) + R(T_B x), ... $.
We have:
\begin{eqnarray*}
&&\text{for } x \in B, \sum_{j= 0}^{R_{n-1}(x) + t} f(T^j x) = n, \text{ for } t= 0, ..., R_{n}(x) - R_{n-1}(x) -1.
\end{eqnarray*}
For $N_n(T, x, \ell) = N_n(T, f, x, \ell)$, it holds: 
\begin{eqnarray*}
N_n(T, x, \ell) &=& 0, \text{ if } n <   R_{\ell}(x),\\
&=& t, \text{ if } n = R_{\ell}(x) + t, \text{ with } 0 \leq t <  R_{\ell +1}(x) - R_{\ell}(x),\\
&=& R_{\ell + 1}(x) - R_{\ell}(x) = R(T_B^\ell x), \text{ if } n \geq R_{\ell +1}(x).
\end{eqnarray*}
For $L \geq 1$, we have for the time preceding the $L$-th return to the basis for $f = 1_B$: 
\begin{eqnarray}
M_{R_L(x) - 1}(T, f, x) = \max_{\ell \leq L} R(T_B^\ell x), \ V_{R_L(x) - 1}(T, f, x) = \sum_{\ell \leq L} R^2(T_B^\ell x). \label{visitRet0}
\end{eqnarray}
In order to compute an explicit example, it is easier to start from a given map $S$ and construct a special flow $T$ over this map.

Let $\varphi : X \to \N$ be integrable and $\geq 1$. The (discrete time) special map $T = T_\varphi$ is defined 
\begin{eqnarray*}
&\text{ on } \tilde X := \{(x, k), \ x \in X, k= 0, ..., \varphi(x)-1\} \subset X \times \R, \\
&\text{ by } T(x,k) := (x, k+1), \text{ if } 0 \leq k < \varphi(x)-1, \ :=(S x, 0), \text{ if } k =\varphi(x)-1.
\end{eqnarray*}
Let $\tilde \mu$ be the probability measure defined on $\tilde X$ by $\tilde \mu(A \times \{k\}) = \mu(\varphi)^{-1} \, \mu(A)$, 
for $k \geq 0$ and $A\subset \{x: k \leq \varphi(x)-1\}$. It is $T_\varphi$-invariant.
The space $X$ can be identified with the subset $B= \{(x, 0), x \in X \}$ of $\tilde X$ with normalized measure.
The set $B$ is the basis and $\varphi-1$ the roof function of the special map $T_\varphi$.

As for the map $S$ we will take an ergodic rotation, the special flow $T_\varphi$ will be also ergodic for the measure $\tilde \mu$ on $\tilde X$.

Observe that the recurrence time $R(x) = R^B(x)$ for the special flow in the basis $B$ is $\varphi(x)$ and the $n$-th return time of $x$ in $B$ 
is $R_n(x) = R_n^B(x) = \sum_{k=0}^{n-1} \varphi(S^k x)$. 

For $S$, let us take a rotation $S = S_\alpha$ on $X = \T/ \Z$ by $\alpha \mod 1$, where $\alpha$ is irrational. We denote by $q_n$ the denominators of $\alpha$.
We will construct the measure preserving transformation which is the special flow (with discrete time) over $S_\alpha$ 
with a roof function $\varphi$ such that, for cocycle generated by $1_B$ in the system $(\tilde X, \tilde \mu, T)$,
$\displaystyle \liminf_n {V_n(T, x) \over M_n^2(T, x)} = 1$.

We will use the next lemma with $p= p_n, q = q_n$, the numerators and denominators of $\alpha$.
\begin{lem} \label{orbx} Let $p, q \geq 1$, $(p, q) = 1$, be such that $|\alpha - p/q| < 1/q^2$.
For every $x$, there is a value $0 \leq i < q$ such that $x + i\alpha \mod 1 \in [0,  2/q]$.

More generally, for every interval $I$ of length $2/q$, for every $x$, there is a value $0 \leq i < q$ such that $x + i\alpha \mod 1 \in I$.
\end{lem}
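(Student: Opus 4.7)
The plan is to show that the orbit $\{x + i\alpha \bmod 1 : 0 \le i < q\}$ is a controlled perturbation of an equispaced grid of $q$ points on the circle, so tightly controlled that each ``cell'' of length $1/q$ of that grid captures exactly one orbit point. Any arc of length $2/q$ must then entirely contain one such cell, hence at least one orbit point. The first assertion is the special case $I=[0,2/q]$ of the second, so I focus on proving the general statement.

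Set $\theta := \alpha - p/q$, so $|\theta| < 1/q^2$ and $|i\theta| \le (q-1)/q^2 < 1/q$ for every $0 \le i < q$. For such $i$,
\begin{equation*}
x + i\alpha \equiv (x + ip/q) + i\theta \pmod 1.
\end{equation*}
Since $\gcd(p,q) = 1$, the map $i \mapsto ip \bmod q$ is a bijection of $\{0,\ldots,q-1\}$; consequently the ``base points'' $\{x + ip/q \bmod 1 : 0 \le i < q\}$ coincide, as a subset of $\R/\Z$, with the equispaced grid $\{x + j/q \bmod 1 : 0 \le j < q\}$, which partitions $\R/\Z$ into the $q$ half-open arcs $J_k := [x + k/q, x + (k+1)/q) \bmod 1$ of length $1/q$. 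Assume WLOG $\theta \ge 0$ (the opposite sign is symmetric, using arcs of the form $(x + (k-1)/q, x + k/q]$). Since $i\theta \in [0, 1/q)$, adding $i\theta$ to the base point $x + (ip \bmod q)/q$ keeps the result inside $J_{ip \bmod q}$. The bijectivity of $i \mapsto ip \bmod q$ then pairs the $q$ orbit points bijectively with the $q$ arcs $J_0, \ldots, J_{q-1}$: each $J_k$ contains exactly one orbit point, namely the one with index $i \equiv k p^{-1} \pmod q$.

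Finally, given any arc $I$ of length $2/q$, a direct packing argument yields some $J_k \subset I$: writing $I = [y, y+2/q]$, the half-open sub-arc $[y, y+1/q)$ contains a unique grid point $x + j/q$, and then $J_j = [x+j/q, x+(j+1)/q) \subset [y, y+2/q)$. Combined with the previous step, $I$ contains the unique orbit point lying in $J_j$, which is the desired $i$. The only substantive obstacle is bookkeeping: the Diophantine bound $|\alpha - p/q| < 1/q^2$ is calibrated precisely so that the worst-case cumulative perturbation $(q-1)|\theta|$ stays strictly below $1/q$, which is what prevents orbit points from escaping into a neighbouring grid cell and forces the clean one-to-one correspondence between orbit points and cells.
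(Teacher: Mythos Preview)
Your proof is correct and follows essentially the same approach as the paper: both exploit that $i \mapsto ip \bmod q$ is a bijection and that the perturbation $|i(\alpha - p/q)| < 1/q$ keeps each orbit point inside a single length-$1/q$ cell of an equispaced grid. The only organizational difference is that the paper anchors the grid at $0$ and treats the shift by $x$ separately, whereas you anchor the grid at $x$ and thereby handle the general interval $I$ directly (the paper's written proof only addresses the special case $I=[0,2/q]$).
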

\proof  It is well known that there is exactly one value of $j \alpha \text{ mod }1$, for $0 \leq j < q$, in each interval $[{\ell \over q}, {\ell+1 \over q}[$, $\ell=0,...,q-1$. 
Let us recall a proof. For $j= 0$,  $j \alpha \in [0, 1/q[$. The map $j \to \ell_j = jp \mod q$, which is injective, is a permutation of the set $\{1, ..., q-1\}$ onto itself.
We have $\alpha = p/q + \gamma$, with $|\gamma| < 1/q^2$. 

Assuming $\gamma > 0$, it follows: 
$j\alpha \mod 1 \in [{\ell_j \over q}, {\ell_j \over q} + {j \over q^2}] \subset [{\ell_j \over q}, {\ell_j + 1\over q}[, \text{ for } j = 1, ..., q-1$.
The case $\gamma < 0$ is treated the same way.

Now let us prove the first point. Let $x$ be in $[0, 1[$. There is $i_0 \in \{0,...,q-1\}$ such that $x = {i_0 \over q} + \theta$, with $0 \leq \theta <  1/q$.
By the claim, there is $i \in [0, q[$ such that $i \alpha \mod 1 \in [{q -i_0 \over q}, {q -i_0 + 1 \over q}]$.
Hence $x+i\alpha \mod 1 \in  [\theta, \frac1q + \theta]  \subset [0, \frac2q]$. \eop

Let $(\lambda_n)$ be an increasing sequence of positive integers which will be subjected below to growth conditions.
First we assume that it satisfies the condition: 
\begin{eqnarray}
q_{\lambda_{n+1}} \geq 3 q_{\lambda_n}, \forall n \geq 1. \label{condit3}
\end{eqnarray}
Denote by $J_n$ the interval $\displaystyle J_n = [{3 \over q_{\lambda_{n+1}}}, {3 \over q_{\lambda_n}}]$. For the roof function, we take, with $\varepsilon_n = \frac1{n^2}$,
$$\displaystyle \varphi= 1 + \sum_{n \geq 1} \lfloor \varepsilon_n q_{\lambda_n} \rfloor 1_{J_n}.$$
The function $\varphi$ is integrable: $\int \varphi d\mu \leq 1 + 3 \sum_n \varepsilon_n$.
Observe also that, by (\ref{condit3}), the length of $J_n$ is $> 2/q_{\lambda_n}$ and that $(\varepsilon_n q_{\lambda_n})$ is not decreasing for $n \geq 2$ .

Let $x$ be in the basis. By construction, the orbit of $x$ under the iteration of $T_\varphi$ is that of the rotation $S_\alpha$ until it enters the set $B^c$, 
complementary of $B$ at some time. Then it stays in this set, until it reaches the roof and comes down to the basis. 
Then the dynamic is that of the rotation, until again $S_\alpha^j x$ falls in the set $\varphi > 1$ and so on.

Let $W_n(x)$ be the first visit of $S^j x$ in $J_n$. By lemma \ref{orbx}, we have $W_n(x) \leq q_{\lambda_n}$.

Now we choose $f$ to get a transient counterexample and a recurrent one.

{\it Transient counterexample.}  

We take $f= 1$ on the basis and 0 outside.

The sequence $(\lambda_n)$ is taken such that
\begin{eqnarray}
q_{\lambda_n} \geq n \, (q_{\lambda_{n-1}})^2, n \geq 1. \label{qlambda}
\end{eqnarray}
By (\ref{visitRet0}), we obtain (recall that now $T_B$, the induced map in the basis $B$, is the rotation $S = S_\alpha$ and $R(T_B^j x) = \varphi(S_\alpha^j x)$):
\begin{eqnarray}
M_{R_{W_n(x)}(x) - 1}(T, x) &=& \max_{j \leq W_n(x)} \varphi(S^j x), \label{MRN}\\
V_{R_{W_n(x)}(x) - 1}(T, x) &=& \sum_{j \leq W_n(x)} \varphi^2(S^j x) . \label{VRN} 
\end{eqnarray}
In the above formula, $\varphi(S^j x)$ is either 1 or (for some $k \leq n-1$) $1 + \lfloor \varepsilon_k q_{\lambda_k} \rfloor \leq 1 + \varepsilon_{n-1} q_{\lambda_{n-1}}$,
excepted for the last term which is $1 + \lfloor \varepsilon_n q_{\lambda_n} \rfloor$.

The maximum in (\ref{MRN}) (given by the first visit to $J_n$) is $1 + \lfloor \varepsilon_n q_{\lambda_n} \rfloor \geq \varepsilon_n  q_{\lambda_n}$.
As we have seen, this first visit for the iterates $S^j x$  occurs at a time $\leq q_{\lambda_n}$. It follows by (\ref{qlambda}):
\begin{eqnarray*}
{V_{R_{W_n(x)}(x) - 1}(T, x) \over M^2_{R_{W_n(x)}(x) - 1}(T, x)}
&\leq& q_{\lambda_n} {(\varepsilon_{n-1} \, q_{\lambda_{n-1}})^2 \over (\varepsilon_n \, q_{\lambda_n} - 1)^2} +1 
\leq ({\varepsilon_{n-1} \over \varepsilon_n})^2  \,{(q_{\lambda_{n-1}})^2 \over q_{\lambda_n}}  {1 \over (1 - (\varepsilon_n \, q_{\lambda_n})^{-1})^2} +1 \\
&\leq&  2 \, ({n \over n - 1})^2 \, {(q_{\lambda_{n-1}})^2 \over q_{\lambda_n}} + 1 \leq \frac 4n + 1, \text{ for } n \text{ big enough}.
\end{eqnarray*}
This shows: $\displaystyle \limsup_n {M^2_n(T, f, x) \over V_n(T, f, x)} = 1$. The result is proved for $x$ in the basis $B$, but is satisfied for a.e. $x \in \tilde X$,
since $\displaystyle \limsup_n {M^2_n(T, f, x) \over V_n(T, f, x)}$ is a.e. constant by ergodicity of the special flow and Lemma \ref{MnInv}.

Remark that $S_k f(x) \to +\infty$ for every point $x$.The sequence $(N_n(x, 0))$ is bounded for every $x$, but not uniformly in $x$.

\vskip 3mm
\goodbreak
{\it Recurrent counterexample.}

In order to obtain a recurrent counterexample, we now use a special cocycle over a rotation by $\alpha$ (with $\alpha$ bpq)
studied later (see Subsection \ref{stepf}).

Let $f$ defined on the basis by $f(x) = 1_{[0, \frac12[}(x) - 1_{[\frac12, 1[}(x)$ and 0 outside,
and $S_k f(x) = \sum_{i=0}^{k-1} f(x + i \alpha \mod 1)$.
By (\ref{VRn}), we have
\begin{eqnarray*}
V_{R_{n-1}(x)}(T, f, x) &=& \sum_\ell [\sum_{k= 0}^{n-1} \varphi(x + k \alpha) \, 1_{S_k f(x) =\ell}]^2\\
&=& \sum_\ell [\sum_{k= 0}^{n-1} (1 + \sum_{j} \varepsilon_j q_{\lambda_j} 1_{J_j}(x + k \alpha)) \,  1_{S_k f(x) =\ell}]^2.
\end{eqnarray*}
Observe that for a constant $C$, $1 + \sum_{j < n} \varepsilon_j q_{\lambda_j} 1_{J_j}(x + k \alpha)) \leq C q_{\lambda_{n-1}}$.
Using the bounds for the special function $f$ and $\alpha$ bpq, this implies:
\begin{eqnarray*}
V_{R_{W_{n-1}(x)}}(T, f, x) &\leq& \sum_\ell [\sum_{k= 0}^{q_{\lambda_n}} (1 + \sum_{j < n} \varepsilon_j q_{\lambda_j} 1_{J_j}(x + k \alpha)) \, 1_{S_k f(x) =\ell}]^2\\
&\leq& C^2 \sum_\ell [\sum_{k= 0}^{q_{\lambda_n}} q_{\lambda_{n-1}} \, 1_{S_k f(x) =\ell}]^2 \\
&\leq& C^2 q_{\lambda_{n-1}}^2 \, \sum_\ell [\sum_{k= 0}^{q_{\lambda_n}} \, 1_{S_k f(x) =\ell}]^2
\leq C^2  q_{\lambda_{n-1}}^2 \, q_{\lambda_n}^2 / \sqrt{\log q_{\lambda_n}}.
\end{eqnarray*}

Put $L_n = S_{W_n(x)} f(x)$ for the site visited by the cocycle when $S^j x$ enters $J_n$. We have 
$$M_{R_{W_n(x)}} (T, f, x) \geq N_{R_{W_n(x)}}(T, f, x, L_n(x)) = \varepsilon_n q_{\lambda_n}.$$
Hence:
\begin{eqnarray*}
0 \leq {V_{R_{W_{n}(x)}}(T, f, x) \over M^2_{R_{W_n(x)}}(T, f, x)} - 1
&\leq&  C^2 {q_{\lambda_{n-1}}^2 \, q_{\lambda_n}^2 \over \sqrt{\log q_{\lambda_n}}} {1 \over \varepsilon_n^2 q_{\lambda_n}^2}
=  C^2 {n^4 q_{\lambda_{n-1}}^2 \over \sqrt{\log q_{\lambda_n}}}.
\end{eqnarray*}
Now, we choose a growth condition on $(\lambda_n)$ stronger than (\ref{qlambda}), such that the above bound tends to 0.

This shows the result for $x$ in the basis, hence on the whole space using again Lemma \ref{MnInv}.

\section{\bf Examples} \label{exSect}

In general, for a dynamical system $(X, \mu, T)$ and a cocycle $(T, f)$,  it seems difficult to get a precise estimate of the quantities $N_n(x, \el), M_n(x), V_n(x)$. 
In this section we present two types of cocycles for which this is possible, 
first in the case of strong stochastic properties, in particular for the classical case of random walks, then when they are generated by step functions over rotations.

\subsection{Random walks} \label{sectRW}

\

{\it 1-dimensional cocycle satisfying the LIL.}
 
We start be a remark on the the law of iterated logarithm (LIL). Suppose that $(T, f)$ is a 1-dimensional cocycle which satisfies the LIL. 
Then for a constant $c_1 > 0$, for a.e. $x$, the inequality $|f_n(x)| > c_1 \, (n \ \ln\ln \, n)^\frac12$ is satisfied only for finitely many values of $n$. 
This implies that, for a.e. $x$, there is $N(x)$ such that $|f_n(x)| \leq (c_1 \, n \ \ln\ln \, n)^\frac12$, for $ n \geq N(x)$;
so that, for $N(x) \leq k < n$, $|f_k(x)| \leq (c_1 \, k \ \ln\ln \, k)^\frac12 \leq (c _1\, n \ \ln\ln \, n)^\frac12$. 

Therefore we have $\Card (\Cal R_n(x)) \leq c_2(x) \, (n \ \ln\ln \, n)^\frac12$, with an a.e. finite constant $c_2(x)$.

In dimension 1, by (\ref{growth0}), we get that  for a.e. $x$ there is $c(x) > 0$ such that 
\begin{eqnarray*} 
V_n(x) \geq C(x) \, n^{\frac32} \, (\ln\ln \, n)^{-\frac12}.
\end{eqnarray*}
The case where a LIL is valid includes the case of a 1-dimensional r.w. centered with finite variance, but also the class of cocycles for which a martingale method can be used.

{\it Random walks.} 

Now we consider sequences given by a random walk. 
For random walks in $\Z^d$, the quantities $V_n(x), M_n(x)$ have been studied in many papers since the 50's. 
$M_n(x)$ is called ``maximal multiplicity of points on a random walk'' by Erd\"os  and Taylor \cite{ET60}.
Below, we give a brief survey of several results for r.w.s. First we recall some definitions.

Let $(\zeta_i)_{i \geq 0}$ be a sequence of i.i.d. random vectors on a probability space $(X, \, \mu)$ with values in $\Z^d$ and common probability distribution $\nu$.
The associated {\it random walk} (r.w.) $Z = (Z_n)$ in $\Z^d$ starting from $\0$ is defined by $Z_0 := \0$, 
$$Z_n := \zeta_0 +... + \zeta_{n-1}, n \geq 1.$$
A r.w. can be seen as a special case of cocycle. Indeed, the r.v.'s $\zeta_i$ can be viewed as the coordinate maps on $(X, \, \mu)$ obtained 
as $(\Z^d)^\Z$ equipped with the product measure $\nu^{\otimes \Z}$ and with the shift $T$ acting on the coordinates. 
We have $\zeta_i = \zeta_0 \circ T^i$ and the cocycle relation $Z_{n +n'} = Z_n + Z_{n'} \circ T^n, \forall n, n' \geq 0$.

Let $\cal S := \{\el \in \Z^d: \PP(\zeta_0 = \el) > 0\}$ be the support of $\nu$ and $L$ the sub-lattice of $\Z^d$ generated by $\Cal S$.
Let $D$ be the sub-lattice of $\Z^d$ generated by $\{\el - \el', \el, \el' \in \cal S\}$.

For simplicity (and without loss of generality) in what follows we will assume that the random walk $Z$ is {\it aperiodic} ($L = \Z^d)$.
We exclude also the ``deterministic'' case (i.e., when $\PP(\zeta_0 = \el) = 1$ for some $\el \in \Z^d$) in dimension 1 
(the deterministic case in higher dimension is excluded by aperiodicity).

Notice that all the pointwise limits or bounds mentioned now for random walks are {\it a.s.} statements.
These bounds will show that conditions (\ref{condi000}), (\ref{GC}) are satisfied by  $V_n(x), M_n(x)$ a.s. for on random walks under mild assumptions.

{\it Recurrence/transience.} 

Recall that a r.w. $Z = (Z_n)$ is recurrent if $\sum_{n=1}^\infty \mu(Z_n = \0) = + \infty$ and otherwise transient. 

Recurrence occurs if and only if $\mu(Z_n=\0 \text{ infinitely often}) = 1$, and transience if and only if $\mu(Z_n=\0 \text{ infinitely often}) = 0$ 
(cf. \cite{ChFu51}, \cite{ChOr62}).

For an aperiodic r.w.  $Z$ in dimension $d$ with a moment of order 2 (for $d=1$, a moment of order 1 suffices),
for $d=1, 2$,  $Z$ is recurrent if and only if it is centered. For $d\ge3$, it is always transient.

\goodbreak
{\it Variance.}

Let $(X_\el, \el \in \Z^d)$ be a stationary centered  r.f. with summable correlation and spectral density $\rho$. We have 
$$\frac1n \|\sum_{k=1}^{n-1} X_{Z_k(x)}\|_2^2 = \int_{\T^d} \frac1n |\sum_{k=0}^{n-1} e^{2\pi i \langle Z_k(x), \t \rangle} |^2  \, d\t
= \int_{\T^d} \frac1n K_n(x, \t) \, \rho(\t) \, d\t,$$
where, using (\ref{Vnp}) with $z_k = Z_k(x)$ and $Z_{k}(x) - Z_{j}(x) = Z_{k}(T^j x)$, $\frac1n K_n$ reads
\begin{eqnarray}
\frac1n K_n(x, \t) &&= 1 + 2 \sum_\el \bigl(\sum_{k = 1}^{n-1} \, \frac1n \sum_{j = 0}^{n-k-1} 1_{Z_{k}(T^j x) = \el} \bigr) \, e^{2\pi i \langle \el, t \rangle}. \label{Knl}
\end{eqnarray}
As already recalled, the existence of the asymptotic variance $\lim_n V_n(x)^{-1} \int |\sum_{k=0}^{n-1} X_{Z_k(x)}|^2 d\mu$ has been shown in \cite{CohCo17} 
and the positivity of the limit has been discussed. 

The asymptotic variance may be zero in case a coboundary condition is satisfied.
An interesting situation is that of the sums along a transient (non deterministic) r.w., where the asymptotic variance is always $> 0$.
Below we will recall briefly a proof.

\vskip 6mm
\goodbreak
{\bf Transient case}

For a transient random walk  we use the following general result (Lemma 3.14 in \cite{CohCo17}):
\begin{lem} \label{ergLem} If $(X, \mu, T)$ is an ergodic
dynamical system and $(\varphi_k)_{k \geq 1}$ a sequence of functions in $L^1(X, \mu)$ such that $\sum_{k \geq 1} \|\varphi_k\|_1 < \infty$, then
\begin{eqnarray}
\lim_n \frac1n \sum_{k=1}^{n-1} \sum_{j=0}^{n-k-1} \varphi_k(T^j x) = \sum_{k=1}^\infty \int \varphi_k \, d\mu, \text{ for a.e. } x. \label{ergLemFormula}
\end{eqnarray}
\end{lem}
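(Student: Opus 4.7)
My plan is to view the inner sum $\sum_{j=0}^{n-k-1}\varphi_k(T^jx)$ as the ergodic sum $S_{n-k}\varphi_k(x)$, so the object to analyse is $\frac1n \sum_{k=1}^{n-1} S_{n-k}\varphi_k(x)$. For each fixed $k$, Birkhoff's ergodic theorem and ergodicity of $T$ give $\frac1{n-k}S_{n-k}\varphi_k(x)\to \int\varphi_k\,d\mu$ on a set $\Omega_k$ of full measure, hence also $\frac1n S_{n-k}\varphi_k(x)\to\int\varphi_k\,d\mu$. Restricting to $\Omega_0:=\bigcap_k\Omega_k$, which is still of full measure, the formal candidate limit $\sum_{k\geq 1}\int\varphi_k\,d\mu$ is finite by the $L^1$-summability hypothesis. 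The task therefore reduces to controlling a uniformly small tail in $k$.

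Given $\varepsilon>0$, I would choose $K$ so that $\sum_{k>K}\|\varphi_k\|_1<\varepsilon$, and split
\begin{equation*}
\frac1n\sum_{k=1}^{n-1} S_{n-k}\varphi_k(x) = \underbrace{\frac1n\sum_{k=1}^{K} S_{n-k}\varphi_k(x)}_{\text{head}} + \underbrace{\frac1n\sum_{k=K+1}^{n-1} S_{n-k}\varphi_k(x)}_{\text{tail}}.
\end{equation*}
On $\Omega_0$, the head converges to $\sum_{k=1}^K\int\varphi_k\,d\mu$ as $n\to\infty$. For the tail, the key step is to swap summations: introducing the auxiliary function $\Phi_K(x):=\sum_{k>K}|\varphi_k(x)|$, which lies in $L^1(\mu)$ with $\|\Phi_K\|_1\leq \sum_{k>K}\|\varphi_k\|_1<\varepsilon$, the estimate
\begin{equation*}
\Bigl|\sum_{k=K+1}^{n-1} S_{n-k}\varphi_k(x)\Bigr|\leq \sum_{k=K+1}^{n-1}\sum_{j=0}^{n-k-1}|\varphi_k(T^jx)| = \sum_{j=0}^{n-K-2}\sum_{k=K+1}^{n-j-1}|\varphi_k(T^jx)|\leq \sum_{j=0}^{n-1}\Phi_K(T^jx)
\end{equation*}
holds. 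Applying Birkhoff to $\Phi_K$, on a further full-measure set the tail is bounded in $\limsup_n$ by $\int\Phi_K\,d\mu<\varepsilon$.

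Combining, on the intersection $\Omega^\star$ of $\Omega_0$ with the Birkhoff-good sets for every $\Phi_K$, $K\geq 1$ (a countable intersection, hence of full measure), we obtain for every $K$:
\begin{equation*}
\Bigl|\limsup_n \frac1n\sum_{k=1}^{n-1} S_{n-k}\varphi_k(x)-\sum_{k=1}^\infty\int\varphi_k\,d\mu\Bigr| \leq 2\sum_{k>K}\|\varphi_k\|_1,
\end{equation*}
and similarly for $\liminf$. Letting $K\to\infty$ yields the claim. The only slightly delicate point is the bookkeeping of null sets, but since everything we use (Birkhoff for the countable family $\{\varphi_k\}\cup\{\Phi_K\}_{K\geq 1}$) depends only on countably many data, no real obstacle arises; the main conceptual step is the swap of summation that produces the single dominating Birkhoff average of $\Phi_K$.
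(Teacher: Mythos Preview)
The paper does not actually prove this lemma; it only quotes it as Lemma~3.14 of \cite{CohCo17}. Your argument is correct and is the natural one: Birkhoff applied term by term to the finite head, and the tail dominated by the single Birkhoff average of $\Phi_K=\sum_{k>K}|\varphi_k|\in L^1(\mu)$, whose integral equals $\sum_{k>K}\|\varphi_k\|_1<\varepsilon$. The swap of summations is checked correctly, and the countable-intersection handling of null sets (for the $\varphi_k$'s and the $\Phi_K$'s) is fine.
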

Therefore, for a transient random walk, we obtain for $\mu$-a.e. $x$:
$$\lim_n {V_n(x)  \over n} = 1  + 2 \lim_n \sum_{k = 1}^{n-1} \, \frac1n \sum_{j = 0}^{n-k-1} 1_{Z_k (T^j x) = \0} 
= 1 + 2 \sum_{k=1}^\infty \mu( Z_k = \0) < + \infty$$
and the normalisation for the variance is by $n$ up to a finite constant factor.

{\it Variance in the non deterministic transient case.}

Now we recall the proof of the positivity of the asymptotic variance.

Let $\Psi(\t) = \E[e^{2\pi i \langle \zeta_0, \t \rangle}], \, \t \in \T^d$.
Observe that $\Psi(\t) \not = 1$ for $\t \not = \0$ in $\T^d$, when the r.w. is aperiodic and $|\Psi(\t)| < 1$, for $\t \not \in  \Gamma_1$,
where $\Gamma_1$ is the closed subgroup $\{\t \in \T^d: e^{2\pi i \langle \r, \t \rangle} = 1, \forall \r\ \in D \}$. 
We put, for $\t \in \T^d \stm0$ and $0 \leq \lambda < 1$,
\begin{align*}
&\Phi(\t) := {1 - |\Psi(\t)|^2 \over |1 - \Psi(\t)|^2} 
= \Re e [{1 + \Psi(\t) \over 1 - \Psi(\t)}],& \\
&\Phi_\lambda(\t) := {1 - \lambda^2 |\Psi(\t)|^2 \over |1 - \lambda \Psi(\t)|^2} 
= -1 + 2 \sum_{k=0}^\infty \lambda^k \Re e(\Psi(\t)^k)
= -1 + 2 \sum_{k=0}^\infty \lambda^k \mu(Z_k = \el) \, \cos(2\pi \langle \el, \t\rangle),& 
\end{align*}
where the last relation follows from
$\Re e(\Psi(\t)^k) = \Re e(\E[e^{2\pi i \langle Z_k, \t \rangle}]) = \sum_\el \mu(Z_k = \el) \, \cos(2\pi \langle \el, \t \rangle)$.

We put $\Phi(\0) = 0$.The function $\Phi$ is even, non-negative and $\Phi(\t) = 0$ only on $\Gamma_1$, which is $\not = \T^d$ when the r.w. is non deterministic
(if the r.w. is deterministic, $\mu(\zeta_0 = \el) = 1$ for some $\el \in \Z^d$ and this implies $|\Psi(\t)| \equiv 1$, but this case is excluded).
Therefore $\Phi$ is $\not = 0$ a.e. for the Lebesgue measure on $\T^d$. 

\begin{proposition} (cf. \cite{Sp64}) Let $Z =(Z_n)$ be a transient aperiodic random walk in $\Z^d$.
There is a non-negative constant $M$ such that the Fourier coefficients of $\frac1n K_n$ converges to those of $\Phi +M \delta_\0$
and $\displaystyle \lim_n \int \frac1n K_n \, \rho \, d\t > 0$.
\end{proposition}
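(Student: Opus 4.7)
The plan is to prove the proposition in three stages: first, compute the Fourier coefficients of $\frac1n K_n$ and establish their a.e.\ convergence via Lemma~\ref{ergLem}; second, identify the limits as the Fourier coefficients of $\Phi+M\delta_\0$ by a weak-$*$ argument on the Abel regulariser $\Phi_\lambda$; third, derive the positivity of the asymptotic variance. From $K_n(\t)=\bigl|\sum_{k=0}^{n-1}e^{2\pi i\langle Z_k(x),\t\rangle}\bigr|^2$ one reads off $\widehat{K_n}(\el)=\#\{(j,k):\, Z_k(x)-Z_j(x)=\el\}$. Using the cocycle identity $Z_k-Z_j=Z_{k-j}\circ T^j$ for $k>j$ together with symmetry between $j$ and $k$, for $\el\ne\0$ this rewrites as $\sum_{m=1}^{n-1}\sum_{j=0}^{n-m-1}({\bf 1}_{Z_m(T^jx)=\el}+{\bf 1}_{Z_m(T^jx)=-\el})$, with an additional diagonal contribution $n$ when $\el=\0$. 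Since transience gives $G(\el):=\sum_{k\ge0}\mu(Z_k=\el)\le G(\0)<\infty$ for every $\el$, Lemma~\ref{ergLem} applies and yields, for a.e.\ $x$ and every $\el$ simultaneously (countable intersection of null sets),
\[
\tfrac1n\widehat{K_n}(\el)\longrightarrow a(\el):=\begin{cases} 2G(\0)-1, & \el=\0,\\ G(\el)+G(-\el), & \el\ne\0.\end{cases}
\]

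For the second stage, the Fourier series of $\Phi_\lambda$ given in the excerpt yields $\widehat{\Phi_\lambda}(\el)=-\delta_{\el,\0}+G_\lambda(\el)+G_\lambda(-\el)$ with $G_\lambda(\el):=\sum_k\lambda^k\mu(Z_k=\el)$, and monotone convergence gives $\widehat{\Phi_\lambda}(\el)\to a(\el)$ as $\lambda\uparrow 1$. It then suffices to show that $\Phi_\lambda\,d\t$ converges weak-$*$ to a measure of the form $\Phi\,d\t+M\delta_\0$. Aperiodicity forces $\Gamma_1=\{\0\}$, so $|1-\lambda\Psi|$ is bounded below uniformly in $\lambda\in[0,1]$ on every compact $K\subset\T^d\setminus\{\0\}$, giving $\Phi_\lambda\to\Phi$ uniformly on such $K$. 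Fatou together with the total-mass identity $\int\Phi_\lambda\,d\t=2G_\lambda(\0)-1\to 2G(\0)-1$ forces $\int\Phi\,d\t\le 2G(\0)-1$, which motivates the definition
\[
M:=(2G(\0)-1)-\int\Phi\,d\t\ \ge\ 0.
\]
For $f\in C(\T^d)$, writing $f=f(\0)+(f-f(\0))$ and cutting off a small ball around $\0$ handles the singularity: uniform exterior convergence controls the bulk, while $f-f(\0)$ vanishes at $\0$ and the total mass remains bounded. The conclusion $\int f\Phi_\lambda\,d\t\to\int f\Phi\,d\t+f(\0)M$ gives the required weak-$*$ limit, and weak-$*$ convergence of finite measures implies that the Fourier coefficients of $\Phi+M\delta_\0$ coincide with $a(\el)$ from stage one.

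For the third stage, the summability $\sum_\el|\hat\rho(\el)|<\infty$ and Parseval give $\int\frac1n K_n\,\rho\,d\t=\sum_\el\frac1n\widehat{K_n}(\el)\,\hat\rho(-\el)$. The pointwise bound $|\widehat{K_n}(\el)|\le\widehat{K_n}(\0)=V_n(x)$ combined with the already-established convergence $V_n(x)/n\to 2G(\0)-1$ furnishes an $n$-uniform summable dominant, so dominated convergence yields
\[
\int\tfrac1n K_n\,\rho\,d\t\longrightarrow\sum_\el a(\el)\hat\rho(-\el)=\int\Phi\,\rho\,d\t+M\rho(\0).
\]
Positivity follows because, in the aperiodic non-deterministic case, $\Phi>0$ almost everywhere on $\T^d$ (since $|\Psi(\t)|<1$ off $\Gamma_1=\{\0\}$), while $\rho$ is continuous, non-negative, and not identically zero for a non-degenerate random field; hence $\int\Phi\,\rho\,d\t>0$ and $M\rho(\0)\ge 0$.

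The main obstacle is stage two: showing that the mass of $\Phi_\lambda$ escaping toward the singularity at $\t=\0$ as $\lambda\uparrow 1$ concentrates exactly on the single point $\{\0\}$ in the weak-$*$ limit, so that one constant $M$ accounts for the Fourier-coefficient mismatch at every $\el$ simultaneously. This hinges on aperiodicity (to force $\Gamma_1=\{\0\}$ and secure locally uniform control of $\Phi_\lambda$ away from $\0$) together with a careful cutoff bookkeeping to identify precisely where the escaping mass is deposited.
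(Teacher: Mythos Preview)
Your proof is correct and follows the same three-stage architecture as the paper. One slip: aperiodicity does \emph{not} force $\Gamma_1=\{\0\}$ (take $\mathcal S=\{1,3\}$ in $\Z$: then $L=\Z$ but $D=2\Z$, so $\Gamma_1=\{0,\tfrac12\}$). What aperiodicity actually gives --- and what your argument really uses --- is $\Psi(\t)\ne1$ for $\t\ne\0$, which yields the uniform lower bound on $|1-\lambda\Psi|$ on compacts of $\T^d\setminus\{\0\}$; the a.e.\ positivity of $\Phi$ comes instead from non-determinism, which makes $\Gamma_1$ a \emph{proper} closed subgroup, hence Lebesgue-null. With that correction your Stage~2 is the paper's cutoff argument repackaged as a weak-$*$ statement.

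Your Stage~3 is a mild variant: you pass to the Fourier side via Parseval and dominate termwise by $(V_n(x)/n)\,|\hat\rho(-\el)|$, invoking $V_n(x)/n\to 2G(\0)-1$ and $\sum_\el|\hat\rho(\el)|<\infty$; the paper stays on the measure side, observing that $(\tfrac1n K_n\,d\t)$ is a bounded sequence of non-negative measures with converging Fourier coefficients, hence weak-$*$ convergent, and then tests against the continuous density $\rho$. Both routes are valid; yours makes explicit use of the absolute summability of $\hat\rho$, while the paper's needs only the continuity of $\rho$.
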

\proof We use that, if $(Z_n)$ is a transient,  for all $\el \in \Z^d$, we have $\sum_{k = 1}^{\infty} \, \mu(Z_k = \el) < +\infty$.

Therefore, the series $I(\el) := -1_{\el = \0} + \sum_{k = 0}^{\infty} \,  [\mu(Z_k = \el) + \mu(Z_k = - \el)]$ converges and
by (\ref{Knl}) and Lemma \ref{ergLem}, the even functions $\frac1n K_n(x, .)$ satisfy:
\begin{eqnarray*}
\int_{\T^d} \frac1n K_n(x, .) \, \cos 2\pi \langle \el, . \rangle \, d\t &&= -1_{\el = \0} + \sum_{k = 0}^{n-1} \, 
\frac1n \sum_{j = 0}^{n-k-1} [1_{Z_{k}(T^j x) = \el} + 1_{Z_{k}(T^j x) = -\el}] {\underset{n \to \infty} \to } \ I(\el).
\end{eqnarray*}
Note that above the sum over $k$ is written starting from 0. By letting $n$ tend to infinity in the relation 
\begin{eqnarray*}
&& - 1_{\el= \0} + \sum_{k = 0}^{\infty} \, \lambda^k [\mu(Z_k = \el) + \mu(Z_k = - \el)] \\
= && \int_{\T^d} \cos 2\pi \langle \el, . \rangle \, [-1 +2 \Re e ({1 \over 1 - \lambda \Psi(.)})] \, d\t \ = \int_{\T^d} \cos 2\pi \langle \el, \t\rangle \, \Phi_\lambda(.) \, d\t,
\end{eqnarray*}
we get since the left sum tends to $I(\el)$:
\begin{eqnarray*}
I(\el) = \lim_{\lambda \uparrow 1} \int_{\T^d} \cos 2\pi \langle \el, \t\rangle \, \Phi_\lambda(\t) \, d\t. 
\end{eqnarray*}
Taking $\el = \0$ in the previous formula, it follows from Fatou's lemma:
\begin{eqnarray*}
&&I(\0) = 1 + 2 \sum_{k = 1}^{\infty} \, \mu(Z_k = \0) = \lim_{\lambda \uparrow 1} \int_{\T^d} \, \Phi_\lambda(\t) \, d\t 
\geq \int_{\T^d} \lim_{\lambda \uparrow 1} \Phi_\lambda(\t) \, d\t = \int_{\T^d} \Phi(\t) \, d\t.
\end{eqnarray*}
This shows the integrability of $\Phi$ on $\T^d$ and we can write with a constant $M \geq 0$
\begin{eqnarray*}
&&I(\0) = \lim_{\lambda \uparrow 1} \int_{\T^d} \, \Phi_\lambda(\t) \, d\t  = \int_{\T^d} \lim_{\lambda \uparrow 1} \Phi_\lambda(\t) \, d\t + M
=\int_{\T^d} \Phi(\t) \, d\t + M.
\end{eqnarray*}
Let $U_\eta$ be the ball of radius $\eta > 0$ centered at $\0$. By aperiodicity of the r.w., $\Psi(\t) \not = 1$ for $\t$ in $U_\eta^c$, the complementary in $\T^d$ of $U_\eta$, 
This implies $\sup_{\t \in U_\eta^c} \sup_{\lambda < 1} \Phi_\lambda(\t) < +\infty$.

Therefore, we get: $\displaystyle \lim_{\lambda \uparrow 1} \int_{U_\eta^c} \cos 2\pi \langle \el, \t\rangle \,  \Phi_\lambda(\t) \,
d\t = \int_{U_\eta^c} \cos 2\pi \langle \el, \t\rangle \,  \Phi(\t) \, d\t$, hence:
\begin{align*}
&I(\el) = \int_{U_\eta^c} \cos 2\pi \langle \el, \t\rangle \, \Phi(\t)\, d\t + \lim_{\lambda \uparrow 1} \int_{U_\eta} \cos 2\pi \langle \el,\t\rangle \, \Phi_\lambda(\t) \, d\t, 
\, \forall \eta>0, &  
\end{align*}
which can be be written:
\begin{align}
&- \int_{U_\eta} \cos 2\pi \langle \el, . \rangle \, \Phi\, d\t = I(\el) - \int_{\T^d} \cos 2\pi \langle \el, . \rangle \, \Phi\, d\t 
- \lim_{\lambda \uparrow 1} \int_{U_\eta} \cos 2\pi \langle \el, . \rangle \, \Phi_\lambda \, d\t.& \label{IpLim}
\end{align}
Let $\varepsilon > 0$. By positivity of $\Phi_\lambda$, we have, for $\eta(\varepsilon)$ small enough:
\begin{eqnarray*}
&&(1 - \varepsilon) \int_{U_{\eta(\varepsilon)}} \, \Phi_\lambda \, d\t \leq \int_{U_{\eta(\varepsilon)}} \cos 2\pi \langle \el,
. \rangle \, \Phi_\lambda \, d\t \leq  (1+\varepsilon) \int_{U_{\eta(\varepsilon)}} \,  \Phi_\lambda\, d\t;
\end{eqnarray*}
By subtracting $\int_{U_\eta(\varepsilon)} \cos 2\pi \langle \el, \t\rangle \, \Phi(\t)\, d\t$ in the previous inequalities and (\ref{IpLim}), we get:
\begin{eqnarray*}
&&(1 - \varepsilon) \int_{U_{\eta(\varepsilon)}} \, \Phi_\lambda \, d\t - \int_{U_{\eta(\varepsilon)}} \cos 2\pi \langle \el, . \rangle \, \Phi\, d\t \\
&&\leq  I(\el) - \int_{\T^d} \cos 2\pi \langle \el, . \rangle \, \Phi \, d\t  - \lim_{\lambda \uparrow 1} \int_{U_\eta(\varepsilon)} \cos 2\pi \langle \el, . \rangle \, \Phi_\lambda \, d\t
+ \int_{U_{\eta(\varepsilon)}} \cos 2\pi \langle \el, . \rangle \, \Phi_\lambda \, d\t \\
&&\leq  (1+\varepsilon) \int_{U_{\eta(\varepsilon)}} \,  \Phi_\lambda \, d\t - \int_{U_{\eta(\varepsilon)}} \cos 2\pi \langle \el, . \rangle \,\Phi \, d\t ;
\end{eqnarray*}
As we can chose $\lambda$ such that 
$$|- \lim_{\lambda \uparrow 1} \int_{U_\eta(\varepsilon)} \cos 2\pi \langle \el, . \rangle \, \Phi_\lambda \, d\t
+ \int_{U_{\eta(\varepsilon)}} \cos 2\pi \langle \el, . \rangle \, \Phi_\lambda \, d\t| \leq \varepsilon,$$
we obtain:
\begin{eqnarray*}
&& -\varepsilon + (1 - \varepsilon) \int_{U_{\eta(\varepsilon)}} \, \Phi_\lambda \, d\t - \int_{U_{\eta(\varepsilon)}} \cos 2\pi \langle \el, . \rangle \,\Phi\, d\t  \\
&& \leq I(\el) - \int_{\T^d} \cos 2\pi \langle \el, .\rangle \, \Phi \, d\t 
\leq\varepsilon + (1 + \varepsilon) \int_{U_{\eta(\varepsilon)}} \, \Phi_\lambda \, d\t - \int_{U_{\eta(\varepsilon)}} \cos 2\pi \langle \el, . \rangle \, \Phi\, d\t
\end{eqnarray*}
For $\varepsilon$ small enough, $\int_{U_{\eta(\varepsilon)}} \cos 2\pi \langle \el, . \rangle \, \Phi \, d\t$ 
can be made arbitrary small,  as well as $\varepsilon \sup_{\lambda < 1} \int_{U_\eta} \Phi_\lambda \, d\t$, 
since $\Phi$ is integrable and $\sup_{\lambda < 1} \int_{\T^d} \Phi_\lambda \, d\t < \infty$.

This shows that $I(\el) - \int_{\T^d} \cos 2\pi \langle \el, . \rangle \, \Phi \, d\t - \int_{U_{\eta(\varepsilon)}} \, \Phi_\lambda \, d\t$  
can be made arbitrarily small for $\varepsilon > 0$ small and $\lambda$ close to 1.
The same is true for $\el =0$ and also for the difference
\begin{eqnarray*}
&[I(\el) - \int_{\T^d} \cos 2\pi \langle \el, . \rangle \, \Phi \, d\t - \int_{U_{\eta(\varepsilon)}} \, \Phi_\lambda \, d\t]
- [I(\0) - \int_{\T^d} \, \Phi \, d\t - \int_{U_{\eta(\varepsilon)}} \, \Phi_\lambda \, d\t]\\
&= [I(\el) - \int_{\T^d} \cos 2\pi \langle \el, . \rangle \, \Phi \, d\t] - [I(\0) - \int_{\T^d} \, \Phi \, d\t]
= [I(\el) - \int_{\T^d} \cos 2\pi \langle \el, . \rangle \, \Phi \, d\t ] - M].
\end{eqnarray*} 
Therefore $I(\el) = \int_{\T^d} \cos 2\pi \langle \el, \t\rangle \, \Phi(\t)\, d\t + M$ for all $\el$
and the Fourier coefficients of $\frac1n K_n$ converges to those of $\Phi +M \delta_\0$.

As the non-negative sequence $(\frac1n K_n)$ is bounded in $L^1$-norm and the density $\rho$ is continuous, 
this proves $\displaystyle \int \frac1n K_n \rho \, d\t \to \int \Phi \rho \, d\t + M \rho(\0)$.
Moreover, the limit is $> 0$ since both $\Phi$ and $\rho$ are not 0 a.e.

It is shown in \cite{Sp64} that $M= 0$ for $d > 1$. \eop

\vskip 3mm
{\it Behaviour of $M_n(x)$.}

In the transient case ($d \geq 3$) (at least for a simple r.w.),  Erd\"os  and Taylor (1960) proved that for a constant $\gamma > 0$ depending on the dimension,
$$\lim_n {M_n(x) \over \log n} = \gamma.$$

{\bf Recurrent case}

In dimension 1, H. Kesten has shown that $\displaystyle \limsup_n {M_n \over \sqrt{ n \, \ln \ln n}} = \sqrt 2 / \sigma$.
Therefore in dimension 1, we have the following lower and upper bounds for $V_n$:
$$ C_1(x) \, n^{\frac32} \, (\ln\ln \, n)^{-\frac12} \leq V_n(x) \leq C_2(x) \, n^\frac32 (\ln \ln n)^\frac12.$$

{\it Dimension $d=2$.}

There is a deterministic rate (law of large numbers): for a constant $C_0$.
$${\int V_n \, d\mu \over n \log n} \to C_0 \text{ and } {V_n(x) \over n \log n} \to C_0,  \text{ for a.e. } x.$$
For a planar simple random walk, Erd\"os and Taylor \cite{ET60} have shown:
\begin{eqnarray} \limsup_n \, {M_n(x) \over (\log n) ^2} \leq \frac1\pi. \label{DPRZbound}
\end{eqnarray}

The result has been extended by Dembo, Peres, Rosen and Zeitouni \cite{DPRZ01},
who proved for an aperiodic centered random walk on $\Z^2$ with moments of all orders:
\begin{eqnarray*} 
\lim_n  {M_n(x) \over (\log n) ^2} = \frac1{2 \pi \det (\Gamma)^\frac12},
\end{eqnarray*}
where $\Gamma$ is the covariance matrix associated to the random walk.

As shown in the proof in \cite{DPRZ01}, it suffices to suppose that the 2-dimensional r.w. is aperiodic.
Moreover, the proof for the upper bound is based on the local limit theorem which uses only the existence of the moment of order 2.
Therefore, assuming the existence of the moment of order 2, the upper bound (\ref{DPRZbound}) holds.

It follows in this case: there exist $C(x)$ a.e finite such that:
\begin{eqnarray*} 
{M_n^2(x) \over V_n(x)} \leq C(x) {(\log n)^3 \over n}. 
\end{eqnarray*}

\subsection{Extensions of the r.w. case}

\ 

{\it 1) Consequence of the Local Limit Theorem (LLT).}

The Local Limit Theorem, when it is satisfied by the cocycle $(T, f)$, gives some pointwise information on $V_n(x)$.
For example, if $d=2$, the following lemma holds:
\begin{lem} Suppose that the LLT holds and $d= 2$. Then, for every $\varepsilon > 0$, 
there is an integrable function $C$ (depending on $\varepsilon$), such that: 
\begin{eqnarray}
&&N_n(x, 0) \leq C(x) (\ln n)^{2 + \varepsilon}, \ V_n(x) \leq C(x) \, n \, (\log n)^{2 + \varepsilon}. \label{Tn}
\end{eqnarray}
\end{lem}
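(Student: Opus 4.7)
The plan is to derive mean estimates from the LLT and then promote them to pointwise bounds via monotonicity along a dyadic subsequence. In dimension $d=2$ the LLT will give a uniform bound $\mu(f_k = \0) \leq C/k$. Writing $N_n(x,\0) = \sum_{k=1}^n 1_{f_k(x)=\0}$, integration yields $\E[N_n(\cdot,\0)] = O(\log n)$, and using the identity (\ref{expVnx0}) one obtains
$$\E[V_n] = n + 2 \sum_{k=1}^{n-1} (n-k)\, \mu(f_k=\0) = O(n \log n).$$

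Next I would exploit that both $n \mapsto N_n(x,\0)$ and $n \mapsto V_n(x) = \sum_\el N_n(x,\el)^2$ are non-decreasing in $n$. Fix $\varepsilon > 0$. For $2^k \leq n < 2^{k+1}$ (with $k \geq 1$), one has $N_n(x,\0) \leq N_{2^{k+1}}(x,\0)$ and $(\log n)^{2+\varepsilon} \geq (k \log 2)^{2+\varepsilon}$, hence
$$\frac{N_n(x,\0)}{(\log n)^{2+\varepsilon}} \leq c_\varepsilon \, \frac{N_{2^{k+1}}(x,\0)}{k^{2+\varepsilon}}.$$
I would set $C_1(x) := \sum_{k \geq 1} N_{2^{k+1}}(x,\0)/k^{2+\varepsilon}$; the mean bound then gives $\E[C_1] \leq C'\sum_k k/k^{2+\varepsilon} = C'\sum_k 1/k^{1+\varepsilon} < \infty$, so $C_1 \in L^1(\mu)$, and the trivial inequality $\sup_k a_k \leq \sum_k a_k$ (for $a_k \geq 0$) delivers $N_n(x,\0) \leq c_\varepsilon\, C_1(x)\, (\log n)^{2+\varepsilon}$ for all $n \geq 2$.

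An identical dyadic argument handles $V_n$: from
$$\frac{V_n(x)}{n (\log n)^{2+\varepsilon}} \leq c'_\varepsilon \, \frac{V_{2^{k+1}}(x)}{2^{k+1} k^{2+\varepsilon}},$$
the function $C_2(x) := \sum_{k \geq 1} V_{2^{k+1}}(x)/(2^{k+1} k^{2+\varepsilon})$ has expectation $O(\sum_k 1/k^{1+\varepsilon}) < \infty$, is therefore integrable, and dominates the ratios above. Taking $C := c_\varepsilon C_1 + c'_\varepsilon C_2$ (with a constant adjustment to absorb finitely many small values of $n$) yields both bounds simultaneously.

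The argument is mostly bookkeeping once the LLT is in hand. The only genuine point that needs care is invoking the correct uniform form of the LLT, so that $\mu(f_k = \0) \leq C/k$ holds for every $k \geq 1$ and not merely asymptotically; in the standard aperiodic formulations this is either built in or an immediate consequence of the error term, so it is a question of citation rather than a real obstacle.
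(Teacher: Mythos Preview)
Your proposal is correct and follows essentially the same route as the paper: obtain the mean bounds $\E[N_n(\cdot,\0)]=O(\log n)$ and $\E[V_n]=O(n\log n)$ from the LLT and (\ref{expVnx0}), then define an integrable majorant as the dyadic sum $\sum_k N_{2^k}(\cdot,\0)/k^{2+\varepsilon}$ (respectively $\sum_k V_{2^k}/(2^k k^{2+\varepsilon})$) and interpolate via monotonicity in $n$. The paper presents the $V_n$ step more tersely (``as above''), while you spell out explicitly that $n\mapsto V_n(x)$ is non-decreasing, which is indeed what makes the dyadic interpolation work there.
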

\proof  By the LLT, it holds, for $n \geq 1$,
$$\int  N_{n}(. , 0) d\mu = \sum_{k=1}^n \int 1_{f_k = 0} \, d\mu \leq C \sum_{k=1}^n {1 \over k} \leq C \ln n.$$
Let $\varepsilon$ be a positive constant. Putting $\Gamma(x) =  \sum_{n= 1}^\infty n^{-(2+\varepsilon)} N_{2^n}(x, 0)$, we have:
$$\int \Gamma(x) \, d\mu(x) \leq C \sum_{n= 1}^\infty n^{-(2+\varepsilon)} n 
=C \sum_{n= 1}^\infty n^{-(1+\varepsilon)} < +\infty,$$
so that $N_{2^n}(x, 0) \leq \Gamma(x) \, n^{2 + \varepsilon}$, where $\Gamma$ is integrable.

If $2^{k_n} \leq n < 2^{(k_n+1)}$, then with $p=2+\varepsilon$, we have for $n$ big enough,
$${N_n(x, 0) \over (\log_2 n)^p} \leq {N_{2^{(k_n+1)}}(x, 0)  \over k_n^{p}}
\leq {(k_n+1)^{p}\over k_n^{p}} {N_{2^{(k_n+1)}}(x, 0)  \over (k_n+1)^{p}}
= (1+1/k_n)^{p} \, \Gamma(x) \leq 2 \Gamma(x).$$
For $V_n(x)$,  by (\ref{expVnx}) we have:
$$\int V_n(x) \, d\mu(x) = 2 \sum_{k=1}^{n-1} \int N_{n-k}(x, 0) \, d\mu(x) + n = O(n \log n).$$
As above for $N_n(x, 0)$, the pointwise bound (\ref{Tn}) follows.
\eop

Among example of cocycles satisfying a LLT, there are the r.w.'s (but with more precise results as recalled above),
but also cocycles generated by functions with values in $\Z^d$ depending on a finite number of coordinates over a sub-shift of finite type endowed with a Gibbs measure
(\cite{GuiHar88}), (\cite{Gou05}). 

\vskip 3mm
{\it 2) Functions depending on a finite number of coordinates on a Bernoulli scheme.}

Now we try to bound $M_n(x)$ in situation which extends slightly that of random walks.

Suppose that  $(X, \mu, T)$ is a Bernoulli scheme with $X = I^\N$, where $I$ is a finite set. Let $f: x \to f(x_1, ..., x_r)$ be a centered function 
from $X$ to $\Z^d$, $d \geq 1$, depending on a finite number of coordinates.

Let us consider the generalized random walk $(Z_n)$ defined by the sequence of ergodic sums $Z_n(x) = f_n (x) = X_0(x) + ... + X_{n-1}(x)$,
where $X_k(x) =f(T^{k} x)$.
\begin{lem} For all $m \geq 1$ and for constants $C_m, C_m'$ independent of $\el$, we have:
\begin{eqnarray*}
\int N_n^m(.,\el) \, d\mu = \int [\sum_{k=1}^{n} 1_{f_k = \el}]^m \, d\mu &&\le C_m n^{m/2}, \for d=1,\\
&& \le C_m' (\Log n)^m, \for d=2. \label{majERnm2}
\end{eqnarray*}
\end{lem}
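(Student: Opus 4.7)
The plan is to expand the $m$-th moment as a sum over $m$-tuples, to estimate each joint probability by a Local Limit Theorem (LLT) bound, and then to carry out a combinatorial sum.

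First, expand
\begin{eqnarray*}
\int N_n^m(\cdot,\el)\,d\mu \ =\ \sum_{k_1,\ldots,k_m=1}^n \mu(f_{k_1}=\el,\ldots,f_{k_m}=\el).
\end{eqnarray*}
By symmetry, up to the combinatorial factor $m!$, it suffices to estimate the sum over ordered tuples $1\le k_1\le\cdots\le k_m\le n$. For such a tuple, rewrite the joint event as $\{f_{k_1}=\el,\,f_{k_2}-f_{k_1}=0,\,\ldots,\,f_{k_m}-f_{k_{m-1}}=0\}$, noting that $f_{k_i}-f_{k_{i-1}}=f_{k_i-k_{i-1}}\circ T^{k_{i-1}}$.

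Next, I seek a product bound. Since $f$ depends on only $r$ coordinates, two consecutive increments $f_{k_i-k_{i-1}}\circ T^{k_{i-1}}$ and $f_{k_{i+1}-k_i}\circ T^{k_i}$ share at most $r-1$ boundary coordinates. I split each increment into a bulk part depending only on coordinates strictly interior to the gap (with a buffer of width $r$) plus a boundary remainder taking $O(1)$ values. Conditioning on the finitely many possible boundary configurations (absorbed into a constant) and using that the bulk parts are genuinely independent ergodic sums of a slight modification of $f$, one applies the LLT of \cite{GuiHar88}, \cite{Gou05}, which gives $\mu(f_k=\el')\le C\,k^{-d/2}$ uniformly in $\el'\in\Z^d$ and $k\ge 1$. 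This yields
\begin{eqnarray*}
\mu(f_{k_1}=\el,\ldots,f_{k_m}=\el)\ \le\ C_m\prod_{i=1}^m (k_i-k_{i-1})^{-d/2},\qquad k_0:=0,
\end{eqnarray*}
with the convention that gaps $k_i-k_{i-1}<r$ are bounded trivially by~$1$ and absorbed into $C_m$.

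Setting $s_i:=k_i-k_{i-1}\ge 1$, the remaining task is to estimate $\displaystyle\sum_{s_1,\ldots,s_m\ge 1,\,s_1+\cdots+s_m\le n}\prod_{i=1}^m s_i^{-d/2}$. For $d=1$, an easy induction on $m$ using $\sum_{s=1}^n s^{-1/2}=O(\sqrt n)$ yields $O(n^{m/2})$; for $d=2$, the analogous induction using $\sum_{s=1}^n s^{-1}=O(\log n)$ yields $O((\log n)^m)$, giving the claimed bounds.

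The main obstacle is Step~2: the increments $f_{k_i}-f_{k_{i-1}}$ are not strictly independent because of the overlapping buffer coordinates. The bulk/boundary decomposition together with conditioning on the finitely many boundary configurations is what converts the joint estimate into a true product, and one must verify that the LLT of \cite{GuiHar88}, \cite{Gou05} applies uniformly to the bulk observables. Once this is in place, the combinatorial step is routine.
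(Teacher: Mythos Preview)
Your proposal is correct and follows essentially the same route as the paper: moment expansion over ordered tuples, rewriting via increments, handling the $r$-coordinate overlap by summing over the finitely many boundary values (the paper fixes $X_{k_i-r+1},\ldots,X_{k_i-1}$ and then drops these constraints for an upper bound) so that the remaining bulk sums depend on disjoint coordinate blocks and are genuinely independent, then the LLT bound $\sup_{\el'} \mu(f_k=\el')\le C k^{-d/2}$ and the convolution estimate $(\sum_{k\le n}\tau_n(k))^m$. One small simplification relative to your sketch: in the paper's decomposition the bulk sums are honest ergodic sums of $f$ itself (just of length $k_i-k_{i-1}-r$ rather than $k_i-k_{i-1}$), so no ``slight modification of $f$'' is needed and the LLT applies directly to~$f$.
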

\proof \ 
We bound the sum
$\sum_{1 \leq k_1 < k_2 < ... < k_m \leq n} \mu(f_{k_1} = \el, f_{k_2} = \el, ..., f_{k_m} = \el)$.

For $r \leq k_1 < k_2 < ... < k_m \leq n$, writing $X_k$ instead of $T^k f$, we have:
\begin{eqnarray*}
&& \mu(X_0+ ...+X_{k_1-1} = \el, X_{k_1} + ...+X_{k_2-1} = \0, ...,  X_{k_{m-1}} + ...+X_{k_m-1} = \0) \\
&&= \sum_{a_{1,1},..., a_{r,1}, ..., a_{1,m},..., a_{r,m} \in S} \ \mu[ \\
&&X_0+ ...+X_{k_1-r} = \el - (a_{1,1} + ...+ a_{r,1}), X_{k_1-r +1}= a_{1,1,} ..., X_{k_1-1}= a_{r,1},  \\
&&X_{k_1} + ...+X_{k_2-r} =  - (a_{1,2} + ...+ a_{r,2}), X_{k_2-r +1}= a_{1,2},... , X_{k_2-1}= a_{r,2}, ...\\
&&  X_{k_{m-1}} + ...+X_{k_m-r} =- (a_{1,m} + ...+ a_{r,m}), X_{k_m-r +1}= a_{1,m},... , X_{k_m-1}= a_{r,m}];
\end{eqnarray*}
which is less than:
\begin{eqnarray*}
&&\sum_{a_{1,1},..., a_{r,1}, ..., a_{1,m},..., a_{r,m} \in S} \ \mu[X_0+ ...+X_{k_1-r} = \el - (a_{1,1} + ...+ a_{r,1}), \\
&&X_{k_1} + ...+X_{k_2-r} =  - (a_{1,2} + ...+ a_{r,2}), ..., \ X_{k_{m-1}} + ...+X_{k_m-r} =- (a_{1,m} + ...+ a_{r,m})].
\end{eqnarray*}
Now inside $[ . ]$ the events are independent. By independence and stationarity, the preceding sum is
\begin{eqnarray*}
\sum_{a_{1,1},..., a_{r,1}, ..., a_{1,m},..., a_{r,m} \in S} &&\mu[X_0+ ...+X_{k_1-r} = \el - (a_{1,1} + ...+ a_{r,1})]\\
&&\mu[X_{0} + ...+X_{k_2- k_1 - r} =  - (a_{1,2} + ...+ a_{r,2})] \ ... \\ 
&&\mu[\ X_{0} + ...+X_{k_m - k_{m-1}-r} =- (a_{1,m} + ...+ a_{r,m})].
\end{eqnarray*}

With $\tau_n(k) = \sup_\el \mu(X_0+ ...+X_{k-1} = \el) \, 1_{[0, n](k)}$, we get the following bound 
\begin{eqnarray*}
&&\sum_{1 \leq k_1 < k_2 < ... < k_m \leq n} \mu(f_{k_1} = \el, f_{k_2} = \el, ..., f_{k_m} = \el) \\
&&= \sum_{r \leq k_1 < k_2 < ... < k_m \leq n} \mu(X_1+ ...+X_{k_1-1} = \el, X_{k_1} + ...+X_{k_2-1} = \0, ...,  X_{k_{m-1}} + ...+X_{k_m-1} = \0) \\
&&\leq s^{rm} \, \sum_{r \leq k_1 < k_2 < ... < k_m \leq n} \tau_n({k_1-r}) \, \tau_n({k_2- k_1 - r})\,  ... \, \tau_n({k_m - k_{m-1}-r})\\
&&\leq C s^{rm} \sum_k (\tau_n * \tau_n * ... * \tau_n)(k) \leq C s^{rm} (\sum_k \tau_n(k))^m.
\end{eqnarray*}
We have $s^{rm}$ terms for the first sum, where the cardinal $|S|$ is denoted by $s$,.

Now we can use, as for the usual r.w., convolution and the local limit theorem. \eop

From the lemma, it follows easily in the recurrent case that  for a.e. $x$, for all $\varepsilon>0$,
$$ \text{ if } d=1, M_n(x) = o(n^{\frac12+\varepsilon}) \ \text{and, if } d=2, M_n(x) = o(n^\varepsilon).$$
In the transient case, if there is a moment of order $\eta$ for some $\eta>0$,  then $M_n(x) =o(n^{\varepsilon})$ for all $\varepsilon>0$.
For these estimates, in both cases, see \cite{CohCo17}.

A question if to extend the previous results to a larger class of functions depending weakly on the far coordinates. For such an extension is that
explicit bounds in the LLT are not always available.

\subsection{Step functions over rotations} \label{stepf}

\

Now we take $X = \mathbb T^r$, $r \geq 1$ endowed with $\mu$, the uniform measure and we consider cocycles over rotations. 
When they are centered, such cocycles are strongly recurrent and therefore the associated quantities $V_n$ and $M_n$ are big. 
The difficult part is to bound them from above. We will give an example where an upper bound can be obtained.

Let $T_\alpha$ be the rotation by an irrational $\alpha$. For $f: X \to \Z^d$, recall that the cylinder map (cf. Subsection \ref{genCocy1}) is 
$\tilde T_{f, \alpha} = \tilde T_\alpha: X  \times \Z^d \to X \times \Z^d$ defined by $\tilde T_\alpha(x,\el)=(x+\alpha, \el+f(x))$.
 
\vskip 3mm
{\it Non centered step cocycles over a rotation.}

Let $f$ be a non centered function with a finite number of values values in $\Z^d$.
Suppose that $f$ is Riemann integrable, which amounts to assume that, for the uniform measure of the torus, 
the measure of the set of discontinuity points of $f$ is zero.

Then by a remark in Subsection \ref{noncent}, $M_n(x)$ is bounded uniformly in $x$ and $n$.
Therefore, for $V_n(x)$, the bounds $n \leq V_n(x) \leq C n$ are satisfied.

\vskip 3mm
{\it Centered step cocycles over a 1-dimensional rotation.}

The interesting situation is that of centered functions. We will consider the case $r = 1$
and when the irrational number $\alpha$ has {\it bounded partial quotients}.
 
Recall that an irrational $\alpha$ with continued fraction expansion $[0; a_1, a_2, ..., a_n, ...]$ is said to have bounded partial quotients (bpq) 
if $\sup_n a_n < +\infty$. The set of bpq numbers has Lebesgue measure zero and Hausdorff dimension 1.

In the sequel of this subsection, $\alpha$ will be an irrational bpq number (for instance a quadratic irrational) and $f$ a centered function with values in $\Z$ and bounded variation.

By Denjoy-Koksma inequality, there is a logarithmic bound for the cocycle $(T_\alpha, f)$: $|f_n(x)| \leq C \ln n$, for a constant $C$. 

The cocycle is strongly recurrent to 0 (and this is true for $d \geq 1$ if $f$ centered has values in $\Z^d$, when its components have bounded variation).
This makes the corresponding maximum $M_n(x)$ big. 
Nevertheless, we will see that condition (\ref{condi00}) is satisfied, at least for a special example.

\vskip 3mm
{\bf Lower bound.}

{\it Lower bound for $V_n$ and variance, case $d=1$.} 

For a general sequence $(z_k)$, we can obtain a lower bound for $V_n$ by an elementary method when there is an upper bound for the variance defined below.
\begin{lem} Defining the mean $m_n$ and the variance $\sigma_n^2$ by
$$m_n = \frac1n \sum_{k=1}^n z_k, \ \sigma_n^2 = \frac1n \sum_{k=1}^n (z_k - m_n)^2,$$
we have
\begin{eqnarray}
V_n \geq \frac19 \, {n^2 \over \sigma_n}, \text{ if } \sigma_n > 1. \label{minorVn}
\end{eqnarray}
\end{lem}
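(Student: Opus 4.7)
The plan is to combine Chebyshev's inequality applied to the empirical distribution of the sample $z_1,\dots,z_n$ with the counting inequality (\ref{majAn}) already proved, choosing the set $\mathcal A$ there to be a set of lattice points capturing most of the sample.

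More precisely, the empirical measure $\frac1n \sum_{k=1}^n \delta_{z_k}$ has mean $m_n$ and variance $\sigma_n^2$, so Chebyshev's inequality gives, for any $c>1$,
\begin{equation*}
\#\{k\leq n:\ |z_k-m_n|<c\sigma_n\}\;\geq\; n\bigl(1-1/c^2\bigr).
\end{equation*}
Let $\mathcal A_c:=\{\ell\in\Z:\ |\ell-m_n|<c\sigma_n\}$. The cardinality of $\mathcal A_c$ is at most $2c\sigma_n+1$, which, under the assumption $\sigma_n>1$, is bounded by $(2c+1)\sigma_n$. Applying (\ref{majAn}) with $\mathcal A=\mathcal A_c$, I obtain
\begin{equation*}
V_n \;\geq\; \frac{\bigl(\sum_{k=1}^n 1_{z_k\in\mathcal A_c}\bigr)^2}{\Card(\mathcal A_c)}
\;\geq\; \frac{\bigl(1-1/c^2\bigr)^2}{2c+1}\cdot\frac{n^2}{\sigma_n}.
\end{equation*}

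It then remains to pick $c$ so that the universal constant beats $1/9$. With $c=2$ the constant is $(3/4)^2/5=9/80$, and since $9/80>1/9$ (equivalently $81>80$) the stated lower bound (\ref{minorVn}) follows. There is no genuine obstacle here: the only point to watch is the absorption of the $+1$ in $2c\sigma_n+1$ into a multiple of $\sigma_n$, which is exactly what the hypothesis $\sigma_n>1$ is designed to allow; the constant $1/9$ in the statement is not sharp and any concrete choice of $c$ slightly larger than $1$ works, the optimum (near $c=\sqrt 5$) giving a better but uglier constant.
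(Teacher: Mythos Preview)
Your proof is correct and essentially identical to the paper's: both apply Chebyshev's inequality to the empirical distribution of $z_1,\dots,z_n$ to show that at least $n(1-c^{-2})$ of the sample points lie in an interval of length $2c\sigma_n$ around $m_n$, then invoke the counting inequality (\ref{majAn}) with $\mathcal A$ this interval, and finally take $c=2$ to get the constant $9/80>1/9$. The only cosmetic difference is that the paper leaves the denominator as $2c\sigma_n+1$ before specializing, while you absorb the $+1$ into $(2c+1)\sigma_n$ via $\sigma_n>1$ first.
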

\proof Suppose that $\sigma_n > 0$. For $\lambda > 1$, let $\Delta_\lambda := [-\lambda \sigma_n + m_n, \, \lambda \sigma_n +m_n] \bigcap \Z$.
We have:
$$\sigma_n^2 \geq \frac1n \sum_{k=0}^{n-1} (z_k - m_n)^2 1_{z_k \in \Delta_\lambda^c} 
\geq \frac1n \sum_{k=0}^{n-1} (1_{z_k \in \Delta_\lambda^c}) \, \lambda^2 \sigma_n^2.$$
Therefore: $\sum_{k=0}^{n-1} 1_{z_k \in \Delta_\lambda} \geq n (1 - \lambda^{-2})$. As $\Card(\Delta_\lambda) \leq 2 \lambda \sigma_n +1$.
It follows by (\ref{majAn}):
$$V_n \geq {(1 - \lambda^{-2})^2 \over 2 \lambda \sigma_n +1} \, n^2.$$
For $\lambda = 2$ we get: $V_n \geq  \ {\frac9{16} \over 4 \sigma_n +1} \, n^2 \geq \frac 9{80} \, {n^2 \over \sigma_n}, \text{ if } \sigma_n > 1$;
hence (\ref{minorVn}).  \eop

\vskip 3mm
If $z_k$ is given by ergodic sums, i.e., $z_k=  f_k(x)$ , let 
$$m_n(x) := \frac1n \sum_{k=1}^n f_k(x), \ \sigma_n^2(x) = \frac1n \sum_{k=1}^n (f_k(x) - m_n(x))^2.$$
By \cite[Proposition 13]{Bo21}, for $\alpha$ bpq and $f$ with bounded vartion, it holds $\sigma_n^2(x) \leq C \ln n$.

Using (\ref{minorVn}) and $V_n(x) \leq n M_n(x)$, this gives a lower bound for $V_n(x)$ and $M_n(x)$:
\begin{eqnarray}
V_n(x) \geq c \, {n^2 \over \sqrt{\ln n}}, \  M_n(x) \geq c \, {n \over \sqrt{\ln n}}. \label{minorRot1}
\end{eqnarray}

Below we will get an estimate from above in the following example.

\ex \label{ex1} $f={\bf 1}_{[0,\frac12)}-{\bf 1}_{[\frac12, 1)}$ and $\alpha$ bpq.

{\it Upper bound  for the example (\ref{ex1}).}

For $f$ as above and $\alpha$ bpq, we have by \cite{ABN17}, for some constant $C_1>0$,
\begin{eqnarray}
\|N_n(\cdot, 0)\|_{\infty}= \|\tilde S_n ({\bf 1}_{\mathbb T^1\times \{0\}}) (\cdot, 0)\|_\infty \leq \frac{C_1 n}{\sqrt{\log n}}. \label{majUnif0}
\end{eqnarray}
Remark that the bound (\ref{majUnif0}) is obtained in \cite{ABN17} as the limit of $\|N_n(\cdot, 0)\|_p$, the $L^p$-norm of $N_n(\cdot, 0)$, as $p$ goes to $\infty$. 
Therefore the bound holds for the norm $\| . \|_{ess\,sup}$, but it can be easily replaced by the uniform norm as written above. 
Indeed, for any $x$, there is a neighborhood $V(x)$ of $x$, such that for $y \in V(x)$, $|N_n(x, 0) - N_n(y, 0)| \leq 1$ (at most one jump in $V(x)$).
As one can find $y \in V(x)$ satisfying $N_n(y, 0) \leq \frac{C_1 n}{\sqrt{\log n}}$, the same inequality holds for $x$, with $C_1$ replaced by $2 C_1$.

Using Remark \ref{unifx}, it follows:
\begin{eqnarray}
M_n(x) \leq C_1 \frac{n}{\sqrt{\log n}}. \label{maxRot1}
\end{eqnarray}

By (\ref{maxRot1}) and since $V_n(x) \leq n \, M_n(x)$, we obtain 
\begin{eqnarray}
V_n(x) \leq C_1 \frac{n^2}{\sqrt{\log n}}. \label{majVn00}
\end{eqnarray}

From (\ref{minorRot1}), (\ref{maxRot1}) and (\ref{majVn00}), it follows: $V_n(x) \asymp n^2/ \sqrt{\log n}$ and $M_n(x) \asymp n/ \sqrt{\log n}$,
where $a_n \asymp b_n$ for two sequences $(a_n)$ and $(b_n)$ means $c \, a_n \leq b_n \leq C \, a_n, \forall n \geq 1$, with two positive constants $c, C$.

Therefore we get in this special example \ref{ex1}:
\begin{eqnarray}
{M_n^2(x) \over V_n(x)} \leq (\frac{C_1 n}{\sqrt{\log n}})^2 /  \frac{c n^2}{\sqrt{\log n}} = {C_1^2 \over c} \, \frac{1}{\sqrt{\log n}} \to 0. \label{majVn0}
\end{eqnarray}
Condition (\ref{condi00}) of Theorem \ref{empirThm1} is satisfied in this example, as well as the condition of Theorem \ref{indicat} a), 
hence a Glivenko-Cantelli theorem along $(S_nf(x))$ for i.i.d. r.v.'s.

But the sufficient conditions for the Glivenko-Cantelli theorems \ref{ratethm}, \ref{indicat} b), \ref{PDQemp} are not satisfied by this cocycle and more
generally, in view of the lower bound (\ref{minorRot1}), by a cocycle defined by step functions over a bpq irrational rotation.

\vskip 3mm
\section{\bf About limit theorems along ergodic sums} \label{sectGC0}

\subsection{Glivenko-Cantelli theorem along ergodic sums}

\

The Glivenko-Cantelli theorem recalled in the introduction is a (pointwise) law of large numbers uniform over a set of functions (here the indicators of intervals).
When the r.v.'s $X_k$ are i.i.d., the proof is an easy consequence of the strong law of large numbers applied to the sequence of i.i.d. bounded r.v.'s ($1_{X_k \leq s})$.
Using Birkhoff's ergodic theorem, the Glivenko-Cantelli theorem has been extended to the setting of a strictly stationary sequence $(X_k)$ 
of random variables. More precisely, formulated in terms of dynamical systems, the following holds:

Let $(Y, {\cal A}, \nu)$ be a probability space and $S$ an ergodic measure preserving transformation on $Y$. For any measurable function
$\varphi: Y \to \R$, let us consider the strictly stationary sequence $(X_k)$ defined by $X_k = \varphi \circ S^k, k \geq 0$. 
Then the sequence of empirical distribution functions satisfies: 
$\text{ for } \nu \text{ a.e. } y \in Y, \sup_s |\frac1n \sum_{k=0}^{n-1}  1_{X_(y)k \leq s} - F(s)| \to 0$, where $F(s) = \nu(\varphi \leq s)$. 

Observe that the result is an application of Birkhoff's theorem and Lemma \ref{ChungLem} recalled in Section \ref{sectGen}.
Its extension  to the non ergodic case has been formulated by Tucker \cite{Tu59}, the distribution function $F(s)$ being replaced 
by the conditional distribution function $\E(1_{\varphi \leq s} | {\Cal J})$, where ${\cal J}$ is the $\sigma$-algebra of $S$-invariant sets.
In others words, we have:
$$\text{ for } \nu \text{ a.e. } y \in Y, \ \lim_{n \to \infty} \, \sup_s |\frac1n \sum_{k=0}^{n-1} 1_{\varphi(S^k y) \leq s} -  \E(1_{\varphi \leq s} | {\Cal J})(y)| = 0.$$
The above formula relies on the ergodic decomposition which can be used in the proof.

In the previous framework, for a process,  a Glivenko-Cantelli like theorem sampled along a sequence generated by a dynamical system can be obtained as follows:

As in Subsection \ref{genCocy}, let $T$ be an ergodic measure preserving transformation on a probability space $(X, {\cal B}, \mu)$ 
and $f$ a measurable function on $X$ with values in $\Z^d$, $d \geq 1$. 

Let us take a second system $(\Omega, \PP, \theta)$, where $\theta = (\theta^\el)_{\el \in \Z^d}$ is a $\Z^d$-action preserving $\PP$.

The skew product associated to the cocycle $(T, f)$ and $\theta$ is the map: $T_{\theta, f}: (x, \omega) \to (Tx, \theta^{f(x)}\omega)$ from $X \times \Omega$ to itself. 
By iteration we get: 
$$T_{\theta, f}^k(x, \omega) = (T^k x, \theta^{f_k(x)}\omega).$$ 
For example, as $\Z^d$-action, we can take a $\Z^d$-Bernoulli shift $(\Omega, \PP, (\theta^\el)_{\el \in \Z^d})$, 
with $\PP$ a product measure and $\theta$ the shift on the coordinates.
If $X_0$ is the first coordinate map, then $(X_\el) = (X_0 \circ \theta^\el)$ is a family of i.i.d. r.v.'s indexed by $\Z^d$.

In general, let ${\Cal I}_{\theta, f}$ denote the conditional expectation with respect to the $\sigma$-algebra of $T_{\theta, f}$-invariant sets.
The ergodic theorem for $T_{\theta, f}$ shows that, for $\psi \in L^1(\mu \times \PP)$,
\begin{eqnarray}
\lim_n \frac1n \, \sum_{k=0}^{n-1} \psi(T^k x, \theta^{f_k(x)}\omega) = {\Cal I}_{\theta, f}(\psi)(x, \omega), 
\text { for } \mu \times \PP \text {-a.e.} (x, \omega). \label{lim0}
\end{eqnarray}
If $\varphi$ is a measurable function on $\Omega$, putting $\psi_s(x, \omega) = {\bf 1}_{I_s} (\varphi(\omega))$, where $I_s$ is the half-line $]-\infty, s]$, we have 
$$\psi_s(T_{\theta, f}^k (x, \omega)) = {\bf 1}_{I_s} (\varphi(\theta^{f_k(x)} \omega)).$$
By the quoted Tucker's result, the convergence in (\ref{lim0}) for each $\psi_s$, $s \in \R$, can be strengthened into a uniform convergence with respect to $s$:
\begin{eqnarray*}
&\text{ for $\mu \times \PP$-a.e } (x, \omega), \, \frac1n \, \sup_s |\sum_{k=0}^{n-1} \, {\bf 1}_{I_s} (\varphi(\theta^{f_k(x)}\omega)) - \Cal I(\psi_s)(x, \omega)| \to 0. 
\end{eqnarray*}
Therefore, by the Fubini theorem, there is a ``sampled'' version of the Glivenko-Cantelli theorem for the empirical process of a stationary sequence:
\begin{proposition}  For $\mu$-a.e  $x$, we have
\begin{eqnarray*}
&|\sup_s \frac1n \, \sum_{k=0}^{n-1} \, {\bf 1}_{I_s} (\varphi(\theta^{f_k(x)}\omega)) - \Cal I(\psi_s)(x, \omega)| \to 0, 
\text{ for $\PP$-a.e } \omega. 
\end{eqnarray*}
\end{proposition}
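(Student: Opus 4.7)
The plan is to deduce the statement immediately from the joint almost-sure convergence established in the discussion preceding the proposition, via Fubini's theorem. Tucker's uniform Glivenko--Cantelli theorem, applied to the strictly stationary sequence obtained by iterating the measure-preserving (but not necessarily ergodic) transformation $T_{\theta,f}$ on $(X\times\Omega,\mu\times\PP)$ against the observable $(x,\omega)\mapsto\varphi(\omega)$, produces the joint $(\mu\times\PP)$-almost sure uniform-in-$s$ convergence
$$
\sup_{s\in\R}\Bigl|\frac{1}{n}\sum_{k=0}^{n-1}{\bf 1}_{I_s}(\varphi(\theta^{f_k(x)}\omega))-{\Cal I}_{\theta,f}(\psi_s)(x,\omega)\Bigr|\;\xrightarrow[n\to\infty]{}\;0,
$$
which is the desired statement up to the order of quantifiers.

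Before invoking Fubini, I would check measurability of the exceptional set. For fixed $(x,\omega)$, both $s\mapsto\frac{1}{n}\sum_{k<n}{\bf 1}_{I_s}(\varphi(\theta^{f_k(x)}\omega))$ and $s\mapsto{\Cal I}_{\theta,f}(\psi_s)(x,\omega)$ are right-continuous, non-decreasing functions of $s$, so the supremum over $s\in\R$ in the display above coincides with the supremum over a fixed countable dense set $Q\subset\R$ together with the (at most countable) set $J$ of jumps of the limit, exactly as in Lemma~\ref{ChungLem}. Consequently the exceptional set
$$
E\;:=\;\Bigl\{(x,\omega):\,\limsup_{n\to\infty}\sup_{s\in\R}\bigl|\tfrac{1}{n}\sum_{k=0}^{n-1}{\bf 1}_{I_s}(\varphi(\theta^{f_k(x)}\omega))-{\Cal I}_{\theta,f}(\psi_s)(x,\omega)\bigr|>0\Bigr\}
$$
is measurable in $X\times\Omega$, and the preceding step yields $(\mu\times\PP)(E)=0$.

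Finally, applying the Fubini--Tonelli theorem to $\mathbf{1}_E$ gives $0=(\mu\times\PP)(E)=\int_X\PP(E_x)\,d\mu(x)$, with $E_x:=\{\omega:(x,\omega)\in E\}$; hence $\PP(E_x)=0$ for $\mu$-a.e.~$x$, which is precisely the claim. There is essentially no analytic obstacle in this last step; the only subtlety, already dealt with in the preceding paragraph, is that $T_{\theta,f}$ need not be ergodic even when $T$ and $\theta$ individually are, which is exactly why the limit must be taken to be the conditional expectation ${\Cal I}_{\theta,f}(\psi_s)$ rather than a deterministic distribution function $F$.
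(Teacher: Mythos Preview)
Your proof is correct and follows exactly the paper's approach: the paper derives the proposition in one line, ``by the Fubini theorem,'' from the preceding $(\mu\times\PP)$-a.e.\ uniform convergence obtained via Tucker's result applied to the skew product $T_{\theta,f}$. Your additional paragraph verifying measurability of the exceptional set is a welcome point of rigor that the paper leaves implicit.
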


When $T_{\theta, f}$ is ergodic, if $\psi \in L^1(\mu \times \PP)$, we have ${\Cal I}_{\theta, f}(\psi)(x, \omega) = \int \psi \, d\mu \, d\PP$, 
for $\mu \times \PP \text {-a.e.} \, (x, \omega)$, and the centering  $\Cal I(\psi_s)(x, \omega)$ is given by the distribution function $F(s) = \mu(\varphi \leq s)$. 
In this case, for a.e. $x$, a Glivenko-Cantelli theorem with the usual centering holds 
for the empirical process sampled along the sequence $(z_n)$ given by $z_n = S_nf(x)$ (with a set of $\omega$'s of $\PP$-measure 1 depending on $x$). 

The lemma below shows, as it is known, that ergodicity of the cylinder map $\tilde T_f$ 
implies ergodicity of the skew map $T_{\theta, f}$. Let us sketch a proof.
\begin{lem} \label{ergCylind}
Suppose that the cocycle $(T, f)$ is recurrent and the map $\tilde T_f$ ergodic. If the action of $\Z^d$ by $\theta$ on $(\Omega, \PP)$ is ergodic, 
then $T_{\theta, f}$ is ergodic on $(X \times \Omega, \mu \times \PP)$.
\end{lem}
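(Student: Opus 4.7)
The plan is to fix a bounded $T_{\theta,f}$-invariant function $F\in L^\infty(X\times\Omega,\ \mu\times\PP)$ and show it is a.e.\ constant. First I would lift $F$ to the larger space $X\times\Z^d\times\Omega$ by setting $\tilde F(x,\el,\omega):=F(x,\theta^{\el}\omega)$. A one-line computation gives
\[\tilde F(Tx,\,\el+f(x),\,\omega)=F(Tx,\,\theta^{f(x)}(\theta^{\el}\omega))=F(x,\theta^{\el}\omega)=\tilde F(x,\el,\omega),\]
where the middle equality uses $T_{\theta,f}$-invariance of $F$ at the point $(x,\theta^{\el}\omega)$; this is legitimate since $\theta^{\el}$ preserves $\PP$, so the exceptional $\PP$-null set for $F$ may be translated. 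Hence $\tilde F$ is invariant under $\tilde T_f\times\Id_\Omega$ on $(X\times\Z^d,\tilde\mu)\times(\Omega,\PP)$.

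Next I would apply Fubini to this invariance: for $\PP$-a.e.\ $\omega$, the section $(x,\el)\mapsto\tilde F(x,\el,\omega)$ is $\tilde T_f$-invariant $\tilde\mu$-a.e. The recurrence assumption on $(T,f)$ ensures that $\tilde T_f$ is conservative on the $\sigma$-finite space $X\times\Z^d$, so ergodicity of $\tilde T_f$ forces every such section to equal a constant $c(\omega)$ $\tilde\mu$-a.e. Thus $\tilde F(x,\el,\omega)=c(\omega)$ for $(\tilde\mu\times\PP)$-a.e.\ $(x,\el,\omega)$, and specializing to $\el=\0$ yields $F(x,\omega)=c(\omega)$ a.e., so $F$ depends only on the $\Omega$-variable.

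It then remains to prove that $c$ is $\PP$-a.e.\ constant. For each fixed $\el\in\Z^d$ the identity $\tilde F(x,\el,\omega)=c(\omega)$ holds a.e., whereas applying the previous conclusion $F(x,\cdot)=c(\cdot)$ at the point $\theta^{\el}\omega$ gives $F(x,\theta^{\el}\omega)=c(\theta^{\el}\omega)$ a.e. Comparing these two expressions yields $c\circ\theta^{\el}=c$ $\PP$-a.e.; as $\Z^d$ is countable, a single $\PP$-conull set realizes $\theta$-invariance of $c$ under the whole $\Z^d$-action, and ergodicity of $\theta$ on $(\Omega,\PP)$ then forces $c$ to be a.e.\ constant. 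Consequently $F$ is $(\mu\times\PP)$-a.e.\ constant, proving ergodicity of $T_{\theta,f}$.

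The step requiring the most care is the passage from ergodicity of $\tilde T_f$ to constancy of invariant measurable functions on the infinite-measure space $X\times\Z^d$: this is precisely where the recurrence hypothesis enters, guaranteeing conservativity of $\tilde T_f$ so that the standard equivalence between ergodicity and a.e.\ constancy of invariant measurable functions is available. The remainder is Fubini plus a routine chasing of null sets.
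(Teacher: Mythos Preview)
Your proof is correct and follows essentially the same route as the paper: lift the invariant function to $X\times\Z^d\times\Omega$ via $\tilde F(x,\el,\omega)=F(x,\theta^{\el}\omega)$, use ergodicity of $\tilde T_f$ on the $\omega$-sections to reduce to a function of $\omega$ alone, then use ergodicity of the $\Z^d$-action to conclude. The paper phrases the middle step by freezing $\omega$ and defining $\varphi_\omega(x,\el)=\Phi(x,\theta^{\el}\omega)$ rather than invoking Fubini on the triple product, but the content is identical; your explicit remark that recurrence is what supplies conservativity (so that ergodicity of $\tilde T_f$ forces invariant measurable functions to be constant in infinite measure) is a point the paper leaves implicit.
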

\proof: 
Let $\Phi$ be a $T_{\theta, f}$ invariant measurable function on $X \times \Omega$:
$$\Phi(Tx, \theta^{f(x)} \omega) = \Phi(x, \omega), \text{ for a.e. } (x, \omega).$$
For a.e. $x$, there is a set $\Omega_x^0$ of full $\PP$-measure in $\Omega$ such that $\Phi(Tx, \theta^{f(x)} \omega) = \Phi(x, \omega)$, 
for all $\omega \in \Omega_x^0$.
As $\Z^d$ is countable, for a.e. $x$, there is a set $\Omega_x$ of full measure such that 
$$\Phi(Tx, \theta^{f(x)} \theta^\el\omega) = \Phi(x, \theta^\el \omega), \text{ for all } \omega \in \Omega_x.$$
Let $\omega \in \Omega_x$. The function $\varphi_\omega(x, \el) := \Phi(x, \theta^\el \omega)$ on $X \times \Z^d$ is measurable, $\tilde T_f$-invariant:
\begin{eqnarray*}
\varphi_\omega(\tilde T_f(x, \el)) &=& \varphi_\omega(T x, \el + f(x)) = \Phi(T x, \theta^{\el + f(x)} \omega) \\
&=& \Phi(T x, \theta^{f(x)} \theta^\el \omega) =  \Phi(x, \theta^\el \omega) = \varphi_\omega(x, \el).
\end{eqnarray*}
It follows from the ergodicity of $\tilde T_f$ that there is a constant $c_\omega$ such that $\varphi_\omega(x, \el) = c_\omega$ for a.e. $x$.
Therefore $\Phi$ coincides a.e. with a function $\psi$ on $\Omega$ which is $\theta$-invariant, hence a constant by the assumption of ergodicity of the action of $\Z^d$ on 
$\Omega$. \eop

\vskip 3mm
With Fubini's argument, we get a Glivenko-Cantelli theorem for a.e. $x$, if we can show that the skew map $T_{\theta, f}$ is ergodic.

There are many examples cylinder flows $\tilde T_f$ which are shown to be ergodic in the literature and so providing examples via Lemma \ref{ergCylind}.
For instance, we can take for $T$ an irrational rotation and $f={\bf 1}_{[0,\frac12)}-{\bf 1}_{[\frac12, 1)}$. The cocycle ($T, f)$ is ergodic 
and the above version of Glivenko-Cantelli theorem applies for any stationary sequence $(X_k)$ (with a conditional distribution if the stationary sequence is not ergodic).
See also examples for which the skew map is ergodic in \cite{LLPVW02}. 

\vskip 3mm
\subsection{Discussion: universal sequences}

\ 

The weakness in the approach of the previous subsection for a sampled Glivenko-Cantelli theorem along ergodic sums $(S_k f(x), k \geq 0)$ is 
that it yields a set of $x$'s of $\mu$-measure 1 depending on the dynamical system $(\Omega, \PP, \theta)$ and on $\varphi$.
One can try to reinforce the statement by introducing a notion of ``universal property''.

In this direction, the LLN for sums sampled along ergodic sums is closely related in the following way to the random ergodic theorems 
which have been studied in several papers.

First, let us call ``universally good'' a sequence $(z_k)$ such that, for every dynamical system $(\Omega, \PP, \theta)$,
for every $\varphi \in L^1(\PP)$, the sequence $\frac1n \sum_{k=0}^{n-1} \varphi \circ \theta^{z_k}$ converges $\PP$-a.e.

We say that $(T, f)$ a  ``(pointwise) good averaging cocycle'' (or a universally representative sampling scheme) if, for $\mu\text{-a.e. } x$, 
the sequence $(S_k f(x))$ is universally good, i.e., for every dynamical system $(\Omega, \PP, \theta)$, for every $\varphi \in L^1(\PP)$, 
$\frac1n \sum_{k=0}^{n-1} \varphi \circ \theta^{S_k f(x)}$ converges $\PP$-a.e.

The definition of a ``mean good averaging cocycle'' is similar, changing the above convergence into convergence in $L^2(\PP)$-norm, 
for every $\varphi$ in $L^2(\PP)$.

A question which has been studied is to find mean or pointwise good averaging cocycles. 
In the first direction, examples and counterexamples of mean good averaging 1-dimensional cocycles are studied in \cite{LLPVW02}, 

For pointwise convergence, there are 1-dimensional examples given by cocycles with a drift. 
In \cite{LPWR94}, the following result is shown: the cocycle defined by a random walk with a moment of order 2 is a pointwise good averaging cocycle if and only if it is not centered.
Moreover it is shown that any ergodic integrable integer-valued stochastic process with nonzero mean is universally representative for bounded stationary processes. 
The proofs are based on the recurrence time theorem (\cite{BFKO89}).

Notice that a related, but different, notion can be introduced by restricting the dynamical system $(\Omega, \PP, \theta)$ 
to belong to a given class ${\Cal C}$ of dynamical systems.

Let us call ``pointwise good for a class $\cal C$ of dynamical systems'', a sequence $(z_k)$ such that, for every dynamical system $(\Omega, \PP, \theta)$
in the class $\cal C$, for every $\varphi \in L^1(\PP)$, $\lim_n \frac1n \sum_{k=0}^{n-1} \varphi \circ \theta^{z_k}= \int \varphi \, d \PP$, $\PP$-a.e.
There is a similar property for the mean convergence.

This can be also expressed for a class of random fields satisfying a condition on the decay of correlations.

For example, by Remark \ref{lebSpec}, every cocycle with values in $\Z^d$ which is not a coboundary is a mean good averaging cocycle 
for the stationary r.f.s on $\Z^d$ such that $\sum_\el |\langle U_\el , U_{\0}\rangle| < +\infty$.

If $(z_k)$ is pointwise universally good for a class $\cal C$, clearly we get the Glivenko-Cantelli property for any dynamical system $(\Omega, \PP, \theta)$ in $\cal C$
and every measurable function $\varphi$, i.e.:
\begin{eqnarray}
&\sup_s |\frac1n \, \sum_{k=0}^{n-1} \, {\bf 1}_{I_s} (\varphi(\theta^{z_k}\omega)) -\PP(\varphi \leq s)| \to 0, 
\text{ for $\PP$-a.e } \omega. \label{univ1}
\end{eqnarray}

As we see, there are two different approaches of the notion of universal sequences for a law of large numbers:
either we ask for a LLN along such a sequence for every dynamical system $(\Omega, \PP, \theta)$ and all functions in $L^1(\PP)$
or we fix a class of dynamical systems, or a class of functions  in $L^1(\PP)$. In the latter case, the condition on the sequence $(z_k)$ may be expressed in a quantitative way.
Let us give a known example and recall the proof.
\begin{proposition} Let $(z_k)$ be a strictly increasing sequence of positive integers. 
If the sequence satisfies: for a finite constant $C$, $z_k \leq C k, \forall k \geq 1$, then $(z_k)$ is a pointwise good averaging sequence 
for the class $\Cal C$ of dynamical systems $(\Omega, \PP, \theta)$ with Lebesgue spectrum.
\end{proposition}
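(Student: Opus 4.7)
The plan is to reduce to mean-zero $\varphi$, exploit strict monotonicity to get $V_n=n$, obtain a dense class of $\varphi$ on which an $L^2$-rate gives easy a.e.\ convergence, and then extend to all of $L^1$ via a sampled maximal inequality. After subtracting $\int\varphi\,d\PP$ we may assume $\int\varphi\,d\PP=0$ and, writing $S_n=\sum_{k=0}^{n-1}\varphi\circ\theta^{z_k}$, we aim for $S_n/n\to 0$ a.e. Strict monotonicity forces $N_n(\ell)\in\{0,1\}$, hence $V_n=n$ and $M_n=1$. Because $\{z_0,\dots,z_{n-1}\}\subset\{0,1,\dots,\lfloor Cn\rfloor\}$, one gets the sampled maximal inequality
\[
\sup_n\frac1n\sum_{k=0}^{n-1}|\psi|(\theta^{z_k}\omega)\;\le\;\frac{z_{n-1}+1}{n}\,M^\ast|\psi|(\omega)\;\le\;(C+1)\,M^\ast|\psi|(\omega),\qquad \psi\in L^1(\PP),
\]
where $M^\ast$ is the Birkhoff maximal function, of weak type $(1,1)$; this is the key transfer tool.

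Next, identify a dense class. The Lebesgue-spectrum hypothesis says $U\colon\varphi\mapsto\varphi\circ\theta$ on $L^2_0(\PP)$ is conjugate to a direct sum of copies of multiplication by $e^{2\pi it}$ on $L^2(\T,dt)$; in these coordinates $\varphi\leftrightarrow(h_j)_j$ the spectral density is $\rho_\varphi=\sum_j|h_j|^2$. Consequently the set
\[
\mathcal D_0:=\{\varphi\in L^2_0(\PP):\rho_\varphi\in L^\infty(\T)\},
\]
which contains all finite sums of bounded coordinate functions, is dense in $L^2_0$ and hence in $L^1_0$. For $\varphi\in\mathcal D_0$ the spectral identity together with $V_n=n$ gives
\[
\Bigl\|\frac{S_n}{n}\Bigr\|_2^2\;=\;\frac1{n^2}\int_\T K_n(t)\,\rho_\varphi(t)\,dt\;\le\;\|\rho_\varphi\|_\infty\,\frac{V_n}{n^2}\;=\;\frac{\|\rho_\varphi\|_\infty}{n}.
\]

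The promotion to a.e.\ convergence on $\mathcal D_0$ is a standard subsequence-and-interpolation argument. Chebyshev plus Borel--Cantelli along $n_k=k^2$ give $S_{n_k}/n_k\to 0$ a.e. For $n_k\le n<n_{k+1}$, Cauchy--Schwarz and the sampled maximal inequality applied to $\varphi^2\in L^1$ yield
\[
\max_{n_k\le n<n_{k+1}}\frac{|S_n-S_{n_k}|^2}{n_k^2}\;\le\;\frac{(n_{k+1}-n_k)}{n_k^2}\sum_{j=n_k}^{n_{k+1}-1}|\varphi|^2(\theta^{z_j}\omega)\;\le\;\frac{(n_{k+1}-n_k)\,n_{k+1}}{n_k^2}\,(C+1)\,M^\ast(\varphi^2)(\omega),
\]
and $(n_{k+1}-n_k)n_{k+1}/n_k^2\sim 2/k\to 0$; since $\varphi^2\in L^1$ makes $M^\ast(\varphi^2)<\infty$ a.e., the maximum tends to $0$ a.e. Combined with $S_{n_k}/n_k\to 0$ this gives $S_n/n\to 0$ a.e.\ for every $\varphi\in\mathcal D_0$.

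Finally we extend to $L^1_0$ by the Banach principle. Given $\varphi\in L^1_0$ and $\delta>0$, pick $\psi\in\mathcal D_0$ with $\|\varphi-\psi\|_1<\delta$; outside a null set,
\[
\limsup_n\Bigl|\frac{S_n(\varphi)}{n}\Bigr|\;\le\;\limsup_n\Bigl|\frac{S_n(\psi)}{n}\Bigr|+\sup_n\Bigl|\frac{S_n(\varphi-\psi)}{n}\Bigr|\;\le\;0+(C+1)\,M^\ast|\varphi-\psi|,
\]
and the weak $(1,1)$ bound for $M^\ast$ gives $\PP(\limsup_n|S_n(\varphi)/n|>\varepsilon)\le (C+1)\delta/\varepsilon$; letting $\delta\downarrow 0$ then $\varepsilon\downarrow 0$ concludes. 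The main obstacle in this strategy is the density of $\mathcal D_0$ in $L^1_0$, which is exactly where the Lebesgue-spectrum hypothesis is used; the remaining ingredients---the sampled maximal inequality, the subsequence argument, and the Banach-principle step---rely only on $z_k\le Ck$ and the strict monotonicity of $(z_k)$.
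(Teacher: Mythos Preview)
Your proof is correct and follows the same two-step architecture as the paper: establish a.e.\ convergence on a dense subclass of $L^1_0(\PP)$ supplied by the Lebesgue-spectrum hypothesis, then extend to all of $L^1$ via the sampled maximal inequality coming from $z_k\le Ck$.

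The only substantive difference is the choice of dense class and the mechanism for a.e.\ convergence on it. The paper takes the linear span of those $\varphi\in L^2_0$ with $\langle\varphi,\varphi\circ\theta^k\rangle=0$ for all $k\neq 0$; since the $z_k$ are distinct, the sequence $(\varphi\circ\theta^{z_k})$ is then orthogonal with constant $L^2$-norm, and the classical SLLN for orthogonal random variables gives a.e.\ convergence in one line. You instead take the larger class $\mathcal D_0=\{\rho_\varphi\in L^\infty\}$, derive the quantitative bound $\|S_n/n\|_2^2\le\|\rho_\varphi\|_\infty/n$ from $V_n=n$, and promote it to a.e.\ convergence by a subsequence-plus-interpolation argument that reuses the maximal inequality for $\varphi^2$. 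Your route is more self-contained and ties in nicely with the spectral estimates developed earlier in the paper; the paper's route is shorter because it outsources the a.e.\ step to a known theorem. Either way, the closure step via the Banach principle and the weak $(1,1)$ bound for $M^\ast$ is identical.
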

\Proof There is a dense set of functions $\varphi \in L^1(\PP)$ such that
\begin{eqnarray}
\frac1n \sum_{k=0}^{n-1} \varphi(\theta^{z_k} \omega) \text{ converges } \PP\text{-a.e.} \label{convzk1}
\end{eqnarray}
Indeed, by the SLLN for orthogonal random variables, (\ref{convzk1}) is satisfied by $\varphi \in L^2(\PP)$ such that 
$\langle \varphi, \varphi \circ \theta^k\rangle = 0, \forall k$. The Lebesgue spectrum property implies that such functions span a dense linear space in 
$L^2(\PP)$, hence in $L^1(\PP)$.

Moreover, the space of functions $\varphi$ such that (\ref{convzk1}) holds is closed by the ergodic maximal lemma in view of the assumption on $(z_k)$.
Therefore (\ref{convzk1}) is satisfied by every $\varphi \in L^1(\PP)$. \eop

To finish, we recall the following example which shows that the behaviour may depend on the properties of the dynamical system $(\Omega, \PP, \theta)$ (cf. \cite{Co73}):

Let $(\Omega, \cal F, \PP)$ be the interval $[0, 1]$ endowed with the Borel $\sigma$-algebra and the Lebesgue measure and take $f = 1_{[0, \frac12]}$. 
Denote by $\cal T$ the class of invertible measure preserving transformations on this space. 
It can be shown that there are increasing sequences $(z_k)$ of positive integers satisfying the conditions of the previous proposition such that, for a dense $G_\delta$ 
of elements in $\cal T$ with continuous spectrum, the ergodic means of $f$ along $(z_k)$ do not converge $\PP$-a.e.

\end{document}